\title[Generators of Residual Intersections]{Residual Intersections are Koszul-{\rm Fitt}ing ideals}
\author[Vinicius Bou\c{c}a]{Vinicius Bou\c{c}a}
\address{$^{2}$  Instituto de Matem\'{a}tica,
Universidade Federal do Rio de Janeiro,  Brazil. }
\email{bouca.vinicius@gmail.com and vbouca@im.ufrj.br }
\author[S.Hamid Hassanzadeh]{S. Hamid Hassanzadeh}
\address{$^{2}$  Instituto de Matem\'{a}tica,
Universidade Federal do Rio de Janeiro,  Brazil. }
\email{hamid@im.ufrj.br and  hassanzadeh.ufrj@gmail.com}
\numberwithin{equation}{section}
\newtheorem{defi}{Definition}[section]
\newtheorem{thm}[defi]{Theorem}
\newtheorem{exa}[defi]{Example}
\newtheorem{prop}[defi]{Proposition}
\newtheorem{conj}[defi]{Conjecture}
\newtheorem{cor}[defi]{Corollary}
\newtheorem{lemma}[defi]{Lemma}
\newtheorem{nota}[defi]{Notation}
\newtheorem{rmk}[defi]{Remark}
\newcommand{\double}{E^{\bullet,\bullet}}
\newcommand{\doublel}{E^{p,q}}
\newcommand{\dvert}{d_v}
\newcommand{\dhor}{d_h}
\newcommand{\fm}{\mathfrak{m}}
\newcommand{\fp}{\frak{p}}
\newcommand{\fa}{\frak{a}}
\newcommand{\ft}{\frak{t}}
\DeclareMathOperator{\sgn}{sgn}\DeclareMathOperator{\Hom}{Hom}  \DeclareMathOperator{\coker}{Coker}
\DeclareMathOperator{\Ker}{Ker} \DeclareMathOperator{\Ann}{Ann} \DeclareMathOperator{\Sym}{Sym}    
\DeclareMathOperator{\depth}{depth}  \DeclareMathOperator{\Ht}{ht} \DeclareMathOperator{\Tot}{Tot} 
\DeclareMathOperator{\grade}{grade}    
\DeclareMathOperator{\E}{\textit{E}}
\DeclareMathOperator{\Z}{\textit{Z}}\DeclareMathOperator{\Kitt}{Kitt}\DeclareMathOperator{\Fitt}{Fitt}
 \DeclareMathOperator{\SD}{SD} \DeclareMathOperator{\SDC}{SDC}
 \def\ff{{\bf f}}\def\aa{{\bf a}}
\def\ag{{\bm \gamma}}
\def\ag{\boldsymbol \gamma}
\begin{document}
\maketitle

\begin{abstract}One  describes  generators of disguised residual intersections in any commutative Noetherian rings.  It is shown  that,  over Cohen-Macaulay rings, the disguised residual intersections and algebraic residual intersections are the same, for ideals with sliding depth. This coincidence provides structural results for algebraic residual intersections in a quite general setting. It is shown how the  DG-algebra structure of Koszul homologies affects the determination of  generators of residual intersections.  In the midway, it is shown  that the Buchsbaum-Eisenbud family of complexes can be  derived  from the Koszul-\v{C}ech spectral sequence. This interpretation of Buchsbaum-Eisenbud families has a crucial rule to establish the above results.
\end{abstract}
\tableofcontents
\section{Introduction}

Residual intersections have a long history in algebraic geometry which goes back to Cayley-Bacharach theory, or at least in the middle of the nineteenth century to Chasles \cite{Ch}  who counted the number of conics tangent to a given conic (See Eisenbud's talk \cite{EisT}  or Kleiman\cite{K} for the historical introduction). 

In intersection theory, the concept of ``residual schemes" and ``residual intersections" are the base of the residual intersection formula of Fulton, Kleiman, Laksov, and MacPherson (or others such as  \cite{Wu}) which describes  a decomposition of the refined intersection product  \cite[Corollary 9.2.3]{Fu}. The theory became part of commutative algebra in the work of Artin and Nagata \cite{AN} wherein Artin and Nagata defined ``Algebraic Residual Intersections" to study the``double point locus" of maps between schemes of finite type over a field.  Although  Fulton's definition \cite[Definition 9.2.2]{Fu} is not the same as the following definition of the algebraic residual intersection, there is a tight relation between them in the affine case (see the introduction of \cite{HN}). 

From the commutative algebra point of view, the theory of residual intersections is a vast generalization of the ``Linkage Theory" of Peskine and Szpiro \cite{PS}. The family of $s$-residual intersections contains  determinantal ideals and, obviously, complete intersections of codimension $s$.  Precisely, 
if $R$ is a commutative   Noetherian  ring,  $I$  an  ideal of
grade $g$  and $s \geq g$  an integer, then
\begin{itemize}
\item{An {\it (algebraic) $s$-residual intersection} of $I$ is a proper ideal $J$ of $R$ such that $\Ht(J)\geq s$ and $J=(\fa:_R I)$ for some ideal $\fa\subset I$ which is generated by $s$ elements.}
\item{A {\it geometric $s$-residual intersection} of $I$ is an algebraic $s$-residual intersection $J$ of $I$ such that $\Ht(I+J)\geq s+1.$}
\item{ An {\it arithmetic $s$-residual intersection} of $I$  is an algebraic $s$-residual intersection such that $\mu_{R_{\fp}}((I/\fa)_{\fp})\leq 1$ for all prime ideal $\fp \supseteq (I+J)$ with $\Ht(\fp)\leq s$. ( $\mu$ denotes the minimum number of generators.)}
\end{itemize}

Since 1983, after the work of Huneke \cite{Hu}, the theory became stronger and stronger due to a series of works of Huneke, Ulrich, Kustin, Chardin, Eisenbud,  and  others. Due to the ubiquity of residual intersections, the theory has attained more attention in the recent years (e.g. \cite{CEU}, \cite{CEU1}, \cite{CNT}, \cite{EU}, \cite{EisT} and \cite{HHU}).   However, there are still many basic and mysterious  properties of $s$-residual intersections which are not established. One of the very basic tasks is
\begin{center}
{\it To determine the generators of an $s$-residual intersection.}
\end{center}
Even from the computational point of view, calculating  the generators of $J=\fa:I$ using elimination, is  expensive. Theoretically, there are few cases for which the set of generators of $J$ can be described, namely,
\begin{itemize}
\item{If $R$ is Gorenstein $I$ is  perfect and $s=g$,  \cite{PS}; }
\item{If $R$ is Gorenstein and $I$ is a complete intersection,    \cite[Theorem 4.8]{BKM} and  \cite[Theorem 5.9(i)]{HU}; }
\item{If $R$ is Cohen-Macaulay(CM) and $I$ is a perfect ideal of height 2,   \cite{Hu},\cite{KU} and \cite[Theorem 1.1]{CEU};}
\item{If $R$ is Gorenstein and $I$ is  perfect Gorenstein ideal of height 3,  \cite[Section 10]{KU}.  }
\item{If $R$ is Gorenstein, $I$ is Gorenstein licci, generically a complete intersection ideal and $s=g+1$,   \cite[Corollary 2.18]{KMU};}
\end{itemize}

In this paper, we describe the generators of residual intersections if $s\leq g+1$, or if the ideal $I$  satisfies the sliding depth condition $\SD_1$, Definition \ref{dsd}. This wide range contains all of the above cases. The turning point is that, instead of looking at the module structure of Koszul homologies of $I$, we study the {\it Differential Graded Algebra} structure of $H_{\bullet}(I)$. 
There is a new ideal which comes into being. We call this ideal the {\it Koszul-Fitting} ideal associated to $I$ and $\fa$, and we  denote it by $\Kitt(\mathfrak a,I)$\footnote{As in the fictions, this acronym  can be used for two different entities.}. $\Kitt(\mathfrak a,I)$  is defined as follows.

Let $R$ be a commutative Noetherian ring, $I = (f_1,\cdots ,f_r)\subseteq R$ an ideal, $\mathfrak a = (a_1,\cdots ,a_s)\subseteq I$. Let $\Phi=(c_{ij})$ be an $r\times s$ matrix in $R$ such that $(a_1,\cdots ,a_s)=(f_1,\cdots ,f_r)\cdot\Phi$.  Let  $K_\bullet= R\langle e_1,\cdots ,e_r;\partial(e_i)=f_i\rangle$ be the Koszul complex equipped with the  structure of the differential graded algebra. Let $\zeta_j = \sum_{i=1}^r c_{ij}e_i,1\leq j\leq s$, $\Gamma_\bullet=R\langle\zeta_1,\cdots ,\zeta_s\rangle$ the algebra generated by the $\zeta$'s, and $Z_\bullet$ be the algebra of Koszul cycles. Then we look at the elements of degree $r$  in the sub-algebra of $K_{\bullet}$ generated by $\Gamma_\bullet$ and $ Z_\bullet$ and define 

$${\rm \Kitt}(\fa,I):= \langle\Gamma_\bullet \cdot Z_\bullet\rangle _r\subseteq K_r = R.$$

%It is not difficult to see that $\langle\Gamma_\bullet \cdot \langle Z_1(\ff;R)\rangle\rangle_r={$ (Proposition \ref{Pzz1}). 

We summarize some of our results in the following theorem.
\begin{thm}\label{Main} With the above  token, we have
\begin{enumerate}
\item{$\Kitt(\mathfrak a,I)$ depends only on the ideals $\fa$ and $I$ and not on the generators or representative matrix.}
\item{$\Kitt(\mathfrak a,I)$ is, indeed,  the disguised residual intersection introduced in \cite{HN}; hence $\Kitt(\mathfrak a,I)\subseteq \fa:_RI$, they have the same radical and $\Kitt(\mathfrak a,I)=\fa:_RI$ if $\mu(I/\fa)\leq 1$.}
\item{$ \Kitt(\mathfrak a,I)= \fa+<\Gamma_\bullet \cdot \tilde{H}_\bullet>_r$ where $\tilde{H}_\bullet$ is the sub-algebra of  $K_{\bullet}$ generated by the representatives of Koszul homologies. However $ \Kitt(\mathfrak a,I)\supseteq \Fitt_0(I/\fa)$. }
\item{If $R$ is CM, $J=\fa:_RI$ is an $s$-residual intersection and $I$ satisfies SD$_1$ then $\Kitt(\mathfrak a,I)=J$ and it is a CM ideal.} 
\end{enumerate}
\end{thm}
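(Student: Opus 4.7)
The plan is to establish the four claims in the order (2), (1), (3), (4), anchoring everything on the identification with the disguised residual intersection of \cite{HN}. This ordering is useful because (2) gives an intrinsic interpretation of $\Kitt(\fa,I)$, from which (1) is immediate, while (3) and (4) refine the description.

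For (2), I would build an explicit comparison between $\langle \Gamma_\bullet\cdot Z_\bullet\rangle_r$ and the disguised residual intersection from \cite{HN}, which is defined as the $0$-th homology of an auxiliary complex obtained from $K_\bullet$ by symmetrizing in the variables dual to the $\zeta_j$'s; its image in $R=K_r$ is generated exactly by the coefficients of expressions $\zeta_{j_1}\wedge\cdots\wedge\zeta_{j_k}\cdot z$ with $z\in Z_{r-k}$. The containment $\Kitt(\fa,I)\subseteq \fa:_R I$ is then a Leibniz computation: since $\partial(\zeta_j\cdot\omega)=a_j\cdot\omega-\zeta_j\cdot\partial(\omega)$ and $\partial(\zeta_j)=a_j$, one checks inductively that every top-degree generator of $\Kitt(\fa,I)$ annihilates $I$ modulo $\fa$. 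Equality of radicals and the case $\mu(I/\fa)\leq 1$ are inherited from the corresponding statements for disguised residuals already proved in \cite{HN}. Part (1) is then immediate, since the disguised residual is by construction an invariant of the pair $(\fa,I)$, independent of generators and of $\Phi$.

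For (3), I would pick a cycle representative of each Koszul homology class, producing an $R$-module decomposition $Z_\bullet = B_\bullet \oplus \tilde{H}_\bullet$. This yields
\[
\langle \Gamma_\bullet \cdot Z_\bullet\rangle_r = \langle \Gamma_\bullet\cdot B_\bullet\rangle_r + \langle\Gamma_\bullet\cdot\tilde{H}_\bullet\rangle_r .
\]
The boundary piece collapses to $\fa$ in degree $r$ via Leibniz combined with $\partial(\zeta_j)=a_j$, giving the stated formula. The Fitting containment is then a direct expansion: writing $\zeta_j=\sum_i c_{ij}e_i$, the coefficient of $e_1\wedge\cdots\wedge e_r$ in $\zeta_{j_1}\wedge\cdots\wedge\zeta_{j_r}$ is an $r\times r$ minor of $\Phi$, and these minors generate $\Fitt_0(I/\fa)$.

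Part (4) is the crux of the paper. With $R$ Cohen-Macaulay and $I$ satisfying $\SD_1$, I would invoke the Koszul-\v{C}ech spectral sequence interpretation of the Buchsbaum-Eisenbud family promised in the abstract. Under $\SD_1$, the depth estimates on the Koszul homology modules $H_i(I)$ force the spectral sequence to degenerate on a sufficiently early page; the resulting edge map then identifies $\Kitt(\fa,I)$ (via (2)) with $\fa:_R I$ itself, while the converging complex serves as a finite free resolution demonstrating Cohen-Macaulayness of $J$. The hard part is precisely this degeneration: one must show that the relevant differentials of the spectral sequence vanish and that the edge map surjects onto $J$, and it is exactly here that $\SD_1$ enters essentially, by providing enough depth in the $H_i(I)$ for a Peskine-Szpiro-type acyclicity criterion to apply to the Buchsbaum-Eisenbud complex that realizes the disguised residual.
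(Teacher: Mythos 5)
There are genuine gaps at the three places where the real work of the paper happens. First, your claim that part (1) is ``immediate, since the disguised residual is by construction an invariant of the pair $(\fa,I)$'' is circular: the disguised residual intersection of \cite{HN} is defined as the image of a map $\tau_0$ built from \emph{chosen} generators $\ff$, $\aa$ and a \emph{chosen} matrix $\Phi$ with $(\aa)=(\ff)\cdot\Phi$, so independence is a theorem, not a definition. The paper proves it in three separate steps (independence of $\Phi$, of the generators of $\fa$, and of the generators of $I$), and the last step is the delicate one: it rests on a lemma showing that adjoining a generator $f_0\in\bigcap_{i=\max\{0,r-s\}}^{r}\Ann H_i(\ff;R)$ does not change $\langle\Gamma_\bullet\cdot Z_\bullet\rangle_r$, using the decomposition $z=e_0\wedge w+w'$ of cycles of $K_\bullet(f_0,\ff)$. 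Second, in part (2) you simply assert that the image of the disguised residual in $K_r=R$ ``is generated exactly by the coefficients of $\zeta_{j_1}\wedge\cdots\wedge\zeta_{j_k}\cdot z$''; this identification is the main structural theorem and is not obvious: one must compute the transgression map $\tau_0$ of the Koszul--\v{C}ech spectral sequence explicitly and show it coincides with the Eagon--Northcott connecting map $\varepsilon_0$ of the matrix $(\Phi\,|\,\mathrm{id})$, which is what the lifting machinery of Sections 2--3 is for. (A smaller point: for the containment $\Fitt_0(I/\fa)\subseteq\Kitt(\fa,I)$ in (3), products of $r$ of the $\zeta_j$'s only give $I_r(\Phi)$; you also need products $\zeta_{L_1}\wedge z_{L_2}$ with $z\in Z_1$ a syzygy of $\ff$, since $\Fitt_0(I/\fa)=I_r(\Phi\,|\,\Psi)$ for $\Psi$ a syzygy matrix.)

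The most serious gap is in part (4). Degeneration/acyclicity of the residual approximation complex under sliding depth was already known from \cite{HN} and yields only that $R/K$ is Cohen--Macaulay and $K\subseteq J$ with the same radical; it does \emph{not} give $K=J$ — that equality is precisely the open conjecture the paper resolves, and no amount of vanishing of differentials in that spectral sequence produces it directly. The paper's actual argument is different and essential: having the explicit generator description $\Kitt(\fa,I)=\langle\Gamma_\bullet\cdot Z_\bullet\rangle_r$ in hand, one proves that $\Kitt$ specializes modulo an $R$-regular element $f_0\in\fa$ (Theorem \ref{Tkittspecial}), then factors out a regular sequence of length $g-2$ inside $\fa$ to reduce to the case $\Ht(I)=2$, where the duality theorem of Chardin--Naeliton--Tran (\cite[Theorem 4.5]{CNT}) gives $K=J$. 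Your sketch contains neither the specialization step nor the reduction to height $2$, and the ``Peskine--Szpiro-type acyclicity'' you invoke cannot substitute for them.
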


So that not only the  Cohen-Macaulayness, but  also the structure of the residual intersections is  determined by the above theorem.

   We briefly recall the history behind this theorem.  In 1983, Huneke \cite{Hu} showed that the G$_s$ condition defined by Artin and Nagata \cite{AN}, in 1972, is not enough to prove the CM-ness of residual intersections in a CM ring (indeed the main theorem in Artin-Nagata paper was wrong although the applications were not); so that Huneke defined Strongly Cohen-Macaulay (SCM) ideals  to show that residual intersections of SCM+G$_s$ ideals are Cohen-Macaulay. SCM condition then relaxed to Sliding Depth condition  in \cite{HVV}. In their 1988 landmark paper, \cite{HU}, Huneke and Ulrich asked the question of whether  G$_s$ condition  is at all needed to prove that SCM ideals have CM residual intersection? Following earlier work of Chardin and Ulrich \cite{CU},  Hassanzadeh \cite{Ha} answered this question, affirmatively, for arithmetic residual intersections.  The answer is based on  new construction which is called {\it Disguised Residual Intersection}\footnote{Disguised residual intersections are limit terms in a particular Koszul-\v{C}ech-$\mathcal{Z}_{\bullet}$ spectral sequence,\cite{HN}. } in \cite{HN}. Disguised residual intersections are CM under sliding depth conditions and coincide with the algebraic residual intersections in many different cases; so that Hassanzadeh and Naeliton in \cite[Conjecture 5.9]{HN} conjectured that the disguised residual intersection is the same as the residual intersection for ideals with sliding depth. CM-ness of algebraic residual intersection for ideals with sliding depth was finally proved  in \cite{CNT} in 2018. Theorem \ref{Main}(4) proves  \cite[Conjecture 5.9]{HN}, in particular.

The whole paper is designed to prove Theorem \ref{Tmain}(Theorem \ref{Main}(4)). The idea of the proof is to  use  reduction modulo regular sequences which reduces the problem to the case where $I$ has height $2$; one then  uses the important duality result of \cite{CNT} to show that the disguised residual and algebraic residual are the same. The byproduct is that we have already  taken off the mask of the disguised residual intersection in Theorem \ref{Tgensdisguised2}(Theorem \ref{Main}(2)); so that we have a description of the generators of the residual intersection. 

The paper is divided into $5$ sections.   To understand the structure of disguised residual intersections,  one needs to explicitly determine the maps in any pages of the  Koszul-\v{C}ech-$\mathcal{Z}_{\bullet}$ spectral sequence. This part is done in the first section where we define $r$-liftable elements and determine when an element in any pages of spectral sequences is liftable, Theorems \ref{Tliftctrit} and \ref{Timd}.   This study led us to a rediscovery  of the Buchsbaum-Eisenbud family of complexes which in turn contains the Eagon-Northcott and Buchsbaum-Rim complexes. In section $3$,  we  show  that the structure of  Buchsbaum-Eisenbud family can be simply explained by a Koszul-\v{C}ech spectral sequence; so that the complex-structure, acyclity and many other properties  are natural consequences of the convergence of the spectral sequence. The crucial point is the connecting maps, Theorem \ref{Tlifting}, which have been  determined in section $3$. In order not to diverge from the main goal we don't add any  corollaries in this section.  In section $4$, we first recall some of the basic definitions including ``disguised residual intersection" and determine the generators of disguised residual intersections in quite general setting, Theorem \ref{Tgensdisguised2}. This is
due to the structure of the Eagon-Northcott complex and its coincidence with a strand Koszul-\v{C}ech spectral sequence (the fact developed in section $3$). Subsection \ref{SSindependent} is devoted to showing that the construction of the disguised residual intersection does not depend on the choice of the generators. We prove Theorem \ref{Main}(3) in \ref{TkittH}. Another structural result in this section is Theorem \ref{Tkittspecial} wherein it is shown that  disguised residual intersections specialize.
Finally, in section $5$ we collect several corollaries and applications, more notably, Theorem   \ref{Tmain}   proves Conjecture \cite[Conjecture 5.9]{HN} to a certain extent. We finish the paper by gathering  some interesting corollaries of the main theorems.

\section{Maps in  Spectral Sequences}\label{SSS}
To explain the structure of the disguised residual intersection, we need a concrete explanation of the maps in spectral sequences. Unfortunately there does not exist such an explanation in the literature although the facts may be known to the experts. Here we follow the notations in \cite{Eis}. 	
	
More details may be found in the first author's Ph.D. thesis \cite{B}
	\begin{defi}\label{Dtot}
		Let $R$ be a commutative ring and  $(\double,\dvert,\dhor)$ be a first quadrant double complex of $R$-modules. The total complex of this bi-complex is a graded module with $k$'th component
		 $$\Tot(\double)^k =\bigoplus_{p+q=k}\doublel$$ coming with a degree one differential $d$ given by $$d(m) = (\dvert(m),\dhor(m)) \in E^{p+1,q}\oplus E^{p,q+1} ~\text{for~all~} m\in \doublel$$
		 
The $p$-th vertical filtration of $\Tot(\double)$ is defined by 
 $$(_{ver}\Tot(\double)^p)^k = \bigoplus\limits_{i\geq p}E^{i,k-i}$$
		Similarly, we can define the $p$-th horizontal filtration by putting
		$$(_{hor}\Tot(\double)^p)^k =  \bigoplus\limits_{i\geq p}E^{k-i,i}$$
	\end{defi}
%------------------------------------------------------------

	In what follows, we'll be working with the totalization of a first quadrant spectral sequence, as in Definition
 \ref{Dtot}, filtered by the vertical filtration. Let $F_{\bullet} = \bigoplus\limits_p (_{ver}\Tot(\double)^p)$ and  
    $$^0E^{\bullet,\bullet}=\bigoplus\limits_p\frac{(_{ver}\Tot(\double)^p)}{(_{ver}\Tot(\double)^{p+1})}\simeq \bigoplus\limits_p \E^{p,\bullet}.$$
      These modules are bi-graded: one degree, given by $p$, is given by the filtration, and the other, given by $k$, is the grading coming from the complex $\Tot(\double)$. Let $q = k-p$.  Consider the exact sequence (we remove the subscript $ver$ for the rest of this section.)
     $$0\rightarrow\bigoplus\limits_p \Tot(\double)^{p+1}\xrightarrow{i}\bigoplus\limits_p \Tot(\double)^p\xrightarrow{\pi}\bigoplus\limits_p \Tot(\double)^p/\Tot(\double)^{p+1}\rightarrow0$$
	
	It's clear that $^0E^{p,q} = \doublel$. Moreover, if we fix the degree $p$, then $^0E^{p,\bullet}$, with the induced differential $d_0$, is just the complex $E^{p,\bullet}$ with the differential $\dvert$. So, from the above complex, we get the exact couple $(H(F),H(E),i^*,\pi^*,\delta)$. Both $H(F),H(E)$ are also bi-graded.  Setting   $k=p+q$, we have 
	\begin{equation}\label{SpectralRule}
	\end{equation}
	\begin{enumerate}
		\item $H(F)^{p,q} = H^k(\Tot(\double)^p)$
		\item $H(E)^{p,q} = H^q(E^{p,\bullet})$
		\item The map $i^*$ takes a cohomology class $(a_{p+1},\cdots ,a_k)\in H^k(\Tot(\double)^{p+1})$ to the cohomology class $(0,a_{p+1},\cdots ,a_k)\in H^k(\Tot(\double)^{p})$ and has bi-degree $(-1,1)$
		\item The map $\pi^*$ takes a cohomology class $(a_p,a_{p+1},\cdots ,a_k)\in H^k(\Tot(\double)^{p})$ to the cohomology class of $a_p\in H^q(E^{p,\bullet})$. It has bi-degree (0,0)
		\item The map $\delta$ is a snake-lemma-like map which takes an element $m\in H^q(E^{p,\bullet}) $ to the element $(\dhor(m),0,\cdots ,0)\in H^{k+1}(\Tot(\double)^{p})$ and has bi-degree $(1,0)$
	\end{enumerate}

	So, if we put $F^{(1)} = H(F)$, $^1E=H(E)$ and $d_1 =\pi\circ\delta$ and iterating the process, we get exact couples ($F^{(r)},~^rE,i^{*(r)},(\pi)^{(r)},\delta^{(r)} $) with  differentials $d^r= (\pi)^{(r)}\circ\delta^{(r)}$ on $^rE$.
		The spectral sequence $(^rE^{\bullet,\bullet},d^r)$ constructed above is the spectral sequence associated to the vertical filtration.

We denote the kernel and the image of $d^r$ by $^rZ^{\bullet,\bullet}$ and $^rB^{\bullet,\bullet}$ respectively.	
%---------------------------------------------------

\medskip
The following definition is new to the context.

	\begin{defi}\label{D}
		Let $m\in \doublel$. We say that $m$ is an $r-$liftable element, for some integer $r$,  if $\dvert(m) = 0$ and there is a sequence of elements $(m,a_1,\cdots ,a_r)$ such that \begin{enumerate}
			\item $\dhor(m) = \dvert(a_1)$
			\item $\dhor(a_i) = \dvert(a_{i+1})$ for every $0 \leq i \leq r-1$.
		\end{enumerate}
	$a_r$ is called an $r$-th lift of $m$ and   the sequence $(m,a_1,\cdots ,a_r)$ is called an $r$-lift sequence of $m$.
	\end{defi}
	\begin{rmk}\label{Rlifthomology}
	\rm	Let $m\in E^{p,q}$, $k=p+q$ and $(m,a_1,\cdots ,a_r)$ be an $r$-lift sequence with $\dhor(a_r)=0.$ Then the sequence $(\textbf{0},m,-a_1,a_2,\cdots ,(-1)^ra_r,\textbf{0})$ gives a cohomology class in $H^k(\Tot(\double)$. Conversely, if $(\textbf{0},m,a_1,\cdots ,a_r,\textbf{0})$ gives a cohomology class in $H^k(\Tot(\double)$, then $$\dvert(m)=0, \dvert(a_1) = -\dhor(m), \dvert(a_{i+1})= -\dhor(a_i).$$ Therefore $(m,-a_1,a_2,\cdots ,(-1)^ra_r)$ is an $r$-lift sequence for $m$.   
. 	\end{rmk}
 We now describe the differentials $d^r$ on the $r$-th page of the spectral sequence.
 %------
 \begin{prop}\label{Tkerd}
 	Let $m\in \E^{p,q}$ be an $r$-liftable element, and $(m,a_1,a_2,\cdots ,a_r)$ an $r$-lift sequence. Then $m \in~ ^iZ^{p,q}$ for all $0\leq i\leq r$ and $d^{r+1}(m) $ is the class of $\dhor(a_r)$ in $^{r+1}E^{p+r+1,q-r}.$
 \end{prop}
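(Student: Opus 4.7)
The plan is to argue by induction on $r$, unwinding the derived-couple construction into an operational zig-zag recipe and matching it term by term with the data of an $r$-lift sequence.

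For the base case $r=0$, the hypothesis $\dvert(m) = 0$ places $m$ in $\ker d_0 = {}^0Z^{p,q}$, and $m$ descends to a class $[m] \in {}^1E^{p,q} = H^q(E^{p,\bullet})$. The identity $d^1[m] = [\dhor(m)]$ is then read off directly from item~(5) of~\eqref{SpectralRule}, since $d^1 = \pi^* \circ \delta$ by construction.

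For the inductive step, assume the statement for $r-1$ and let $(m, a_1, \dots, a_r)$ be an $r$-lift sequence. The truncated sequence $(m, a_1, \dots, a_{r-1})$ is then an $(r-1)$-lift sequence, so by the inductive hypothesis $m \in {}^iZ^{p,q}$ for $0 \le i \le r-1$ and $d^r[m]$ is represented by $\dhor(a_{r-1})$. The $r$-th lift relation $\dhor(a_{r-1}) = \dvert(a_r)$ exhibits $\dhor(a_{r-1})$ as a $\dvert$-boundary, so its class already vanishes in $H^{q-r+1}(E^{p+r,\bullet})$, and \emph{a fortiori} in the subquotient ${}^rE^{p+r, q-r+1}$. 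Hence $d^r[m] = 0$, placing $m$ in ${}^rZ^{p,q}$.

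To identify $d^{r+1}[m]$, I would unfold the iterated exact couple. Concretely, $d^{r+1}[m] \in {}^{r+1}E^{p+r+1, q-r}$ equals the class of the first nonvanishing component of the total differential of any zig-zag lift of $m$ into $\Tot(\double)^p$, and the $r$-lift sequence furnishes precisely such a lift: set
$$\omega = \bigl(m,\; -a_1,\; a_2,\; -a_3,\; \dots,\; (-1)^r a_r,\; 0,\; \dots\bigr) \in (\Tot(\double)^p)^{p+q}.$$
The lift relations $\dhor(a_{i-1}) = \dvert(a_i)$ are exactly the cancellations needed to kill the components of $d\omega$ in filtration positions $p, p+1, \dots, p+r$, leaving $\pm\,\dhor(a_r) \in E^{p+r+1, q-r}$ as the earliest surviving term. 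Remark~\ref{Rlifthomology} records the same bookkeeping in the converse direction, where the further condition $\dhor(a_r) = 0$ promotes $\omega$ to a total cocycle representing a class in $H^{p+q}(\Tot(\double))$. One concludes $d^{r+1}[m] = [\dhor(a_r)]$ in ${}^{r+1}E^{p+r+1, q-r}$.

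The main obstacle is making rigorous the correspondence between the abstract differential $d^{r+1} = \pi^{(r+1)} \circ \delta^{(r+1)}$, obtained from the $(r+1)$-fold iteration of the derived-couple construction, and the concrete ``read off the leading component of $d\omega$'' recipe used above. The sign conventions employed in forming $d = \dvert \pm \dhor$ inside $\Tot(\double)$ must also be tracked consistently with those appearing in Remark~\ref{Rlifthomology}; beyond that bookkeeping, the argument is a clean induction.
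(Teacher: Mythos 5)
Your proposal follows essentially the same route as the paper: both identify $d^{r+1}(m)$ by unwinding the iterated exact couple through the zig-zag $(m,-a_1,a_2,\dots,(-1)^ra_r)$ in $\Tot(\double)$ and reading off the leading surviving component $\dhor(a_r)$. The one step you defer --- matching $d^{r+1}=\pi^{(r+1)}\circ\delta^{(r+1)}$ with the ``leading component of $d\omega$'' recipe --- is exactly the content of the paper's proof: there one computes $\delta(m)=(\dhor(m),0,\dots,0)\in H^{k+1}(\Tot(\double)^p)$ from item (5) of (\ref{SpectralRule}), uses the lift relations to replace this class by the homologous representative $(\overbrace{0,\dots,0}^{r},\dhor(a_r),0,\dots,0)$ supported in filtration $p+r+1$, and then observes that applying $d^{i}$ amounts to taking the preimage under $i^*$ exactly $(i-1)$ times and projecting onto the first coordinate; this yields $m\in\ker d^i$ for $i\le r$ and $d^{r+1}(m)=[\dhor(a_r)]$ in one stroke. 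Your packaging of the kernel statement as an induction on $r$ (killing $d^r[m]$ because $\dhor(a_{r-1})$ is a $\dvert$-boundary, hence already zero on the first page and a fortiori on later pages) is a sound and slightly cleaner variant of the paper's all-at-once homologous-chain argument, but it does not by itself produce $d^{r+1}(m)$, so the exact-couple bookkeeping cannot be avoided.
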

\begin{proof} We need to follow the rules of definition  of the spectral sequence by an exact couple. It is easy to see that   $d^0=\dvert$ and $d^1=\dhor$.

We start with the case $r = 1$. Suppose that $m$ is $1$-liftable and let $(m,a)$ be a $1$-lift sequence. Considering the map $\delta$ in (\ref{SpectralRule})
 \begin{equation}\label{Edelta}
	\delta(m) = (\dhor(m),0,\cdots ,0) \in H^{k+1}(\Tot(E^{\bullet,\bullet})).
\end{equation}

	As $(m,a)$ is a $1$-lift sequence, we have $\dhor(m)=-\dvert(a)$. Moreover, $(-\dvert(a),0,\cdots ,0)$ and $(0,\dhor(a),\cdots ,0)$ are cohomologous in $H^{k+1}(\Tot(E^{\bullet,\bullet}))$. To calculate $d^1(m)$, we have to project $(0,\dhor(a),\cdots ,0)$ onto the first coordinate. Therefore $m\in \ker d^1$. To calculate $d^2(m)$ we must calculate $\pi^{(1)}(0,\dhor(a),\cdots ,0)$. By definition, we must take preimage by $i^*$ once and then project onto the first coordinate. As \begin{equation}
	(i^*)^{-1}((0,\dhor(a),\cdots ,0)) = (\dhor(a),\cdots ,0),
	\end{equation}
	$d^2(m)$ is the class of $\dhor(a)$ in $^2E^{p+2,q-1}.$
		
	Suppose now that $m$ is $r-$liftable and that $(m,a_1,\cdots ,a_r)$ is an $r-$lift sequence. Again, applying $\delta$ to $m$, we have the same equation as in (\ref{Edelta}). 
	Thence  $$
	(\dhor(m),0,\cdots ,0)=(-\dvert(a_1),0,\cdots ,0)\sim(0,\dhor(a_1),0,\cdots ,0) = (0,-\dvert(a_2),0,\cdots ,0)\sim$$
	\begin{equation}
	(0,0,\dhor(a_2),0,\cdots ,0) = \cdots \sim(\overbrace{0,\cdots ,0}^r,\dhor(a_r),0,\cdots ,0),
	\end{equation}
	where $\sim$ means homologous. To apply $d^{i}$, we must take  $(i-1)$ times the preimage by $i^*$  and then project onto the first coordinate. Thus, if $i\leq r$, then $m \in \ker d^i.$ To calculate $d^{r+1}(m)$, we must , $r$ times,  take the preimage by $i^*$. As \begin{equation}
	(i^*)^{r}(\overbrace{0,\cdots ,0}^r,\dhor(a_r),0,\cdots ,0))=(\dhor(a_r),0,\cdots ,0).
	\end{equation} 
	Therefore, projecting in the first coordinate yields the result.
\end{proof}
 We now prove the converse.
 \begin{thm}\label{Tliftctrit}
 	Let $m\in \E^{p,q}$. If $m\in~ ^rZ^{p,q}$, then $m$ is $r-$liftable.
 \end{thm}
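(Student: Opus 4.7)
The plan is induction on $r$. I would dispatch the base cases $r = 0$ and $r = 1$ directly: $m \in {}^0Z^{p,q}$ says $d_v(m) = 0$, which is exactly $0$-liftability; and for $r = 1$, Proposition \ref{Tkerd} gives $d^1(m) = [d_h(m)]$ on page $1$, so $d^1(m) = 0$ means $d_h(m) = d_v(a_1)$ for some $a_1 \in E^{p+1, q-1}$.

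For the inductive step at $r \geq 2$, assume the statement for all smaller indices. Since $m \in {}^rZ^{p,q}$ forces $m \in {}^{r-1}Z^{p,q}$, the induction hypothesis provides an $(r-1)$-lift sequence $(m, a_1, \ldots, a_{r-1})$. By Proposition \ref{Tkerd}, $d^r(m)$ equals the class $[d_h(a_{r-1})]_r$ in ${}^rE^{p+r, q-r+1}$, and the hypothesis forces this class to vanish. Vanishing on page $r$ is equivalent to $[d_h(a_{r-1})]_{r-1}$ lying in the image of $d^{r-1} : {}^{r-1}E^{p+1, q-1} \to {}^{r-1}E^{p+r, q-r+1}$, so one finds $y \in {}^{r-1}Z^{p+1, q-1}$ with $d^{r-1}(y) = [d_h(a_{r-1})]_{r-1}$. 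Applying the induction to $y$ yields an $(r-2)$-lift $(y, y_1, \ldots, y_{r-2})$, and Proposition \ref{Tkerd} then gives $d^{r-1}(y) = [d_h(y_{r-2})]_{r-1}$.

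The crucial move is the simultaneous substitution $a_i \mapsto a_i' := a_i - y_{i-1}$ for $i = 1, \ldots, r-1$ (with the convention $y_0 := y$). Using $d_v(y) = 0$ together with the lift relations $d_v(y_j) = d_h(y_{j-1})$, a line-by-line check shows that $(m, a_1', \ldots, a_{r-1}')$ is again an $(r-1)$-lift sequence and that $d_h(a_{r-1}') = d_h(a_{r-1}) - d_h(y_{r-2})$ now represents zero on page $r-1$, not merely on page $r$. Re-entering the same argument at step $k$ (producing $y^{(k)} \in {}^{r-k}Z^{p+k, q-k}$ with an $(r-k-1)$-lift supplied by the inductive hypothesis, and substituting only in $a_k, \ldots, a_{r-1}$) lowers the vanishing-page by one at every iteration. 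After $r - 1$ iterations, the final $d_h(a_{r-1}^{\mathrm{fin}})$ vanishes on page $1$, meaning it equals $d_v(a_r)$ for some $a_r \in E^{p+r, q-r}$; appending $a_r$ then produces the required $r$-lift sequence.

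The main obstacle will be the bookkeeping: ensuring that every substitution preserves the entire chain of $(r-1)$-lift relations $d_v(a_j^{(k)}) = d_h(a_{j-1}^{(k)})$ simultaneously, and that the auxiliary $y^{(k)}$ and their iterated lifts sit in the correct bi-degrees so that Proposition \ref{Tkerd} and the induction continue to apply at each level. Both reduce to systematic use of $d_v d_h + d_h d_v = 0$ together with the $(r-k-1)$-lift relations on each $y^{(k)}$, but the cases $j < k$, $j = k$, and $j > k$ must be spelled out separately to make the recursion rigorous.
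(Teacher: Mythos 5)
Your proposal is correct and follows essentially the same route as the paper: induction on $r$, using Proposition \ref{Tkerd} to identify $d^r(m)$ with the class of $d_h(a_{r-1})$, then correcting the $(r-1)$-lift sequence by subtracting the iterated lifts of an element of ${}^{r-1}Z^{p+1,q-1}$ that hits $[d_h(a_{r-1})]$, and repeating to lower the page of vanishing one step at a time until $d_h$ of the last term becomes an honest $d_v$-boundary. The bookkeeping you flag (the simultaneous substitution preserving all lift relations) is exactly the check the paper performs with its sequences $(0,\dots,0,a_i^{(i)},\dots,a_{r-1}^{(i)})$.
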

	\begin{proof}
		For $r=1$ the proof is easy: $d^1(m)=\dhor(m) = 0 \in ~^1E^{p+1,q} = H^q(M^{p+1.\bullet})$ implies $d_h(m) = \dvert(a)$ for some $a\in M^{p+1,q-1}.$ 
		
		To illustrate the proof, we do the case $r = 2$. Let $m\in ~~^2Z^{p,q}$.  In particular,  $m\in~~ ^1Z^{p,q}$
		 which  implies that $m$ is $1-$liftable. Let $(m,a)$ be a $1$ -lift sequence. As $d^2(m) \in~^1B^{p+2,q-1}$, there is $a'$ with $\dvert(a') = 0$ such that $\dhor(a) = \dhor(a')$ in $^1E$. By the case $r=1$, there is $a''\in E^{p+2,q-2}$ with $\dvert(a'')=\dhor(a-a')$. Now, $(m,a-a',a'')$ is a $2$-lift sequence for $m$.
		
		For the general case, let $m\in~~ ^rZ^{p,q}$. Then, by induction, $m$ is $(r-1)-$liftable.  Let $(m,a_1,\cdots ,a_{r-1})$ be an $(r-1)$-lift sequence. Then by Proposition \ref{Tkerd},  $\dhor(a_{r-1})=d^r(m)= 0$ in $^rE$. Therefore, there is $a_{r-1}^{(1)}\in~~ ^{r-1}Z^{p,q}$ such that $\dvert(a_{r-1}) = 0$ and $\dhor(a_{r-1}) = d^{r-1}(a_1^{(1)})$ in $^{r-1}E$. Again by induction $a_1^{(1)}$ is $(r-2)$- liftable. Take an $(r-2)$-lift sequence   
	     \begin{equation}
		(0,a_1^{(1)},a_2^{(1)},\cdots ,a_{r-1}^{(1)})
		\end{equation}
		such that $\dhor(a_{r-1}-a^{(1)}_{r-1}) = 0$  in $^{r-1}E$. It is immediate to see that 
		\begin{equation} 
		(m,a_1-a_1^{(1)},a_2-a_2^{(1)},\cdots ,a_{r-1}-a_{r-1}^{(1)})
		\end{equation}
		is an $(r-1)-$lift sequence. Now we may construct inductively, for each $1\leq i\leq r-1$, $(r-1-i)$-lift sequences 
		\begin{equation}
		(0,\cdots ,0,a_i^{(i)},\cdots ,a_{r-1}^{(i)})
		\end{equation}
		such that $\dhor(a_{r-1}-a_1^{(i)}-\cdots -a_{r-1}^{(i)}) = 0$ in $^{r-i}E$. If $i = r-1$, we have that $\dhor(a_{r-1}-\sum_{i=1}^{r-1}a_{r-1}^{(i)}) = 0$ in $^1E$, that is, there is $a_r$ such that \begin{equation}
		\dhor(a_{r-1}-\sum\limits_{i=1}^{r-1}a_{r-1}^{(i)}) = \dvert(a_r).
		\end{equation}  
		It follows that \begin{equation}
		(m,a_1-a_1^{(1)},a_2-\sum\limits_{i=1}^{2}a_{2}^{(i)},\cdots ,a_{r-1}-\sum\limits_{i=1}^{r-1}a_{r-1}^{(i)},a_r)
		\end{equation}
		is an $r$-lift sequence.
	\end{proof}
The same method of the proof of Theorem \ref{Tkerd} gives us the following characterization of $^rB^{p,q}$.
\begin{thm}\label{Timd}
	Let $m\in \E^{p,q}$. Then $m\in~ ^rB^{p,q}$ if and only if there is an $(r-1)$-lift sequence $(a_1,\cdots ,a_{r-1},m)$ and an element $a_r$ such that $m = \dhor(a_{r-1})+\dvert(a_r)$.
\end{thm}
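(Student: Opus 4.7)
The plan is to run the exact-couple machinery of Section \ref{SSS} in reverse, closely paralleling the proofs of Proposition \ref{Tkerd} (which identifies the differentials) and Theorem \ref{Tliftctrit} (which identifies the cycles). I would argue the two implications separately, with a small induction on $r$ in each.

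For the forward direction, suppose $m \in {}^rB^{p,q}$. By definition, the class of $m$ equals $d^r[\alpha]$ for some class $[\alpha] \in {}^rE$ at the appropriate bi-degree. Since $[\alpha]$ lies in ${}^rE$, the representative $\alpha$ belongs to ${}^rZ$, and hence by Theorem \ref{Tliftctrit} is $(r-1)$-liftable. Fix an $(r-1)$-lift sequence, which I will write as $(a_1, a_2, \ldots, a_{r-1})$ with $a_1 := \alpha$; Proposition \ref{Tkerd} then gives $d^r[a_1] = [\dhor(a_{r-1})]$ in ${}^rE^{p,q}$. The hypothesis $[\dhor(a_{r-1})] = [m]$ in ${}^rE^{p,q}$ must then be unwound through the tower of subquotients built in Section \ref{SSS}: at each page one peels off a layer of boundaries coming from earlier pages, and those corrections can be absorbed back into the $a_i$'s (replacing them by $a_i - a_i'$), leaving a single $\dvert$-coboundary $\dvert(a_r)$ as residual error. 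Relabelling so that the original sequence together with $m$ reads $(a_1, \ldots, a_{r-1}, m)$ as in the statement produces the decomposition $m = \dhor(a_{r-1}) + \dvert(a_r)$.

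For the reverse direction, assume the $(r-1)$-lift sequence $(a_1, \ldots, a_{r-1}, m)$ and the element $a_r$ are given. The term $\dvert(a_r)$ vanishes already on the first page ${}^1E^{p, q} = E^{p,q}/\Image(\dvert)$ and hence on every subsequent page; so $[m] = [\dhor(a_{r-1})]$ in ${}^rE^{p,q}$. By Proposition \ref{Tkerd} applied to $a_1$ with its $(r-1)$-lift sequence, $[\dhor(a_{r-1})] = d^r[a_1]$, which places $[m]$ inside ${}^rB^{p,q}$.

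The main obstacle is the forward direction, where one must convert the abstract equality in the subquotient ${}^rE^{p,q}$ into a concrete additive identity in $E^{p,q}$. This is the mirror image of the telescoping construction that powers Theorem \ref{Tliftctrit}: rather than successively lifting to produce an $r$-lift sequence, one successively peels off the accumulated page-$i$ corrections and feeds them back into the lifts. A joint induction on both $r$ and the current page index is the cleanest way to organize the bookkeeping; once it is in place, the computation reduces to the same kind of snake-lemma manipulation already carried out in the proof of Proposition \ref{Tkerd}.
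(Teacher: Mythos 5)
Your proposal is correct and follows essentially the route the paper itself intends: the paper's entire proof of Theorem \ref{Timd} is the one-line remark that ``the same method of the proof of Theorem \ref{Tkerd}'' applies, and your two directions --- reading $d^r[a_1]=[\dhor(a_{r-1})]$ off Proposition \ref{Tkerd} for the ``if'' part, and for the ``only if'' part choosing an $(r-1)$-liftable representative via Theorem \ref{Tliftctrit} and then unwinding the equality $[m]=[\dhor(a_{r-1})]$ in the subquotient ${}^rE^{p,q}$ --- are exactly that method. Two small points. First, the ``absorption'' step you only assert in the forward direction is most cleanly carried out by induction on $r$: the equality in ${}^rE^{p,q}$ says $m-\dhor(a_{r-1})\in{}^{r-1}B^{p,q}$, so by the inductive hypothesis it equals $\dhor(c_{r-2})+\dvert(c_{r-1})$ for some shorter lift sequence $(c_0,\dots,c_{r-2})$, which one pads with a leading zero and adds termwise to the given sequence (a sum of lift sequences is again a lift sequence by linearity of $\dhor$ and $\dvert$); this yields the required decomposition in one stroke. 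Second, your indexing inherits the off-by-one already present in the statement: for $\dhor$ of the last term to land in $E^{p,q}$, the lift sequence must consist of $r$ entries ending in some $a_{r-1}\in E^{p-1,q}$, so it can neither literally end in $m$ (as the statement writes it) nor be the $(r-1)$-term sequence $(a_1,\dots,a_{r-1})$ you take as the source of $d^r$ --- with only $r-1$ entries Proposition \ref{Tkerd} computes $d^{r-1}$, not $d^r$.
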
 
	%\subsection{Convergence}
	Again, considering the vertical filtration of the first quadrant double complex $\double$, the inclusions \begin{equation}
	i: \Tot(\double)^p\rightarrow\Tot(\double)
	\end{equation}
	induce a filtration 
	\begin{equation}\label{Efilt}
	H^k(\Tot(\double))\supset i^*(H^k(\Tot(\double)^1))\supset i^*(H^k(\Tot(\double)^2))\supset\cdots \supset i^*(H^k(\Tot(\double)^{p}))\supset\cdots 
	\end{equation}
	
	The following theorem explicitly explains the convergence of spectral sequences.
	\begin{prop}\label{Tconvergence}
		Let $E^{\bullet,\bullet}$ be a first quadrant bi-complex with the vertical filtration. Then the map
		 $$\begin{array}{ccc}
		\varphi_{p,q}:i^*(H^k(\Tot(M^{\bullet,\bullet})^p))/i^*(H^k((\Tot(M^{\bullet,\bullet})^{p+1})~)&\rightarrow&^\infty E^{p,q}\\
		(m,a_{p+1},\cdots ,a_k)&\mapsto &\bar{m} 
		\end{array},$$
		($q = k-p$) is a well defined  isomorphism.
	\end{prop}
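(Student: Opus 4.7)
The plan is to build the isomorphism by translating between cocycles of $\Tot(\double)^p$ in degree $k=p+q$ and lift sequences based at $E^{p,q}$ whose terminal $\dhor$-image vanishes, as provided by Remark \ref{Rlifthomology}. I would first confirm that $\varphi_{p,q}$ lands in ${}^{\infty}E^{p,q}$: given a cocycle $(m, a_{p+1}, \ldots, a_k) \in (\Tot(\double)^p)^k$, Remark \ref{Rlifthomology} yields a $(k-p)$-lift sequence for $m$ with terminal $\dhor$-image zero, so Proposition \ref{Tkerd} forces $m \in {}^{i}Z^{p,q}$ for every $i \leq q+1$; for $i \geq q+2$ the first-quadrant hypothesis makes the target ${}^{i}E^{p+i, q-i+1}$ of $d^{i}$ zero, so $m \in {}^{\infty}Z^{p,q}$ and determines a well-defined class $\bar m$.

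Next I would verify that $\bar m$ depends only on the class in the filtration quotient. If two cocycles $x=(m, a_{p+1}, \ldots, a_k)$ and $x'=(m', a'_{p+1}, \ldots, a'_k)$ represent the same element of $i^{*}(H^k(\Tot(\double)^p))/i^{*}(H^k(\Tot(\double)^{p+1}))$, then $x - x' = dy + z$ in $\Tot(\double)$ for some $y=(y_0, \ldots, y_{k-1})$ and some cocycle $z$ supported in $\Tot(\double)^{p+1}$. Extracting the $E^{p,q}$-component yields $m - m' = \dvert(y_p) + \dhor(y_{p-1})$, while the vanishing of the components of $dy$ at positions below $p$ translates (after inserting alternating signs) into a lift sequence in the sense of Definition \ref{D}. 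This is precisely the datum that Theorem \ref{Timd} requires to conclude $m - m' \in {}^{\infty}B^{p,q}$, so $\bar m = \bar{m'}$.

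For surjectivity, given $m \in {}^{\infty}Z^{p,q}$, the first-quadrant hypothesis gives ${}^{\infty}Z^{p,q} = {}^{q+2}Z^{p,q}$, so Theorem \ref{Tliftctrit} yields a $(q+1)$-lift sequence $(m, a_1, \ldots, a_{q+1})$. Since $a_{q+1} \in E^{p+q+1, -1} = 0$, the condition $\dhor(a_q) = \dvert(a_{q+1}) = 0$ holds automatically, and Remark \ref{Rlifthomology} assembles $(m, -a_1, a_2, \ldots, (-1)^q a_q)$ into a cocycle of $\Tot(\double)^p$ mapping to $\bar m$. For injectivity, if $\bar m = 0$ then $m \in {}^{\infty}B^{p,q} = {}^{p+1}B^{p,q}$ by dual stabilization; applying Theorem \ref{Timd} produces lift data $(a_1, \ldots, a_p)$ and an auxiliary element $a_{p+1}$ whose combination, with alternating signs, assembles into a $y \in (\Tot(\double))^{k-1}$ satisfying $(dy)_j = 0$ for $j < p$ and $(dy)_p = m$; subtracting $dy$ from the given cocycle produces a cohomologous cocycle supported in $\Tot(\double)^{p+1}$, exhibiting the class in $i^{*}(H^k(\Tot(\double)^{p+1}))$.

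The main obstacle is the consistent sign management in the translation between lift sequences (as defined by Definition \ref{D}) and total-complex cocycles (as described by Remark \ref{Rlifthomology}). Particular care is needed in the well-definedness step, since the lift sequence extracted from $y$ must interact with the $E^{p,q}$-component of $dy$ in a way that matches the form of Theorem \ref{Timd}, and in the injectivity step, where the auxiliary element $a_{p+1}$ must be inserted into $y$ at precisely the position that makes $(dy)_p = m$ without disrupting the vanishing of $(dy)_j$ for $j < p$. These sign issues are purely mechanical and, once settled, the proof reduces to the two lifting Theorems \ref{Tliftctrit} and \ref{Timd}.
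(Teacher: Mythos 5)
Your argument is correct and is exactly the proof the paper intends: the paper's own proof is the single line that the statement is ``a straightforward application of Theorems \ref{Tliftctrit} and \ref{Timd},'' and you have supplied precisely that application, translating between total-complex cocycles and lift sequences via Remark \ref{Rlifthomology}, using Proposition \ref{Tkerd} together with the first-quadrant vanishing for well-definedness, Theorem \ref{Tliftctrit} (with the observation that the $(q+1)$-st lift lives in a zero module, forcing $\dhor(a_q)=0$) for surjectivity, and Theorem \ref{Timd} for both the independence of the representative and injectivity. No gaps beyond the sign bookkeeping you already flag, which is indeed mechanical.
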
  
\begin{proof} The proof is an straightforward application of Theorems  \ref{Tliftctrit} and \ref{Timd}.
\end{proof}

To finish this section, we give some remarks: we have chosen to work with a first-quadrant double complex with differentials increasing the degree for the sake of clarity. All the definitions and arguments can be adapted to double complexes placed in other quadrants or with other configurations for the differentials. For the convergence, the proof of Theorem \ref{Tconvergence} shows that the only thing we need is a complex with finite diagonals.

%%%%%%%%%%%%%%%%%%%%%%%%%%%%%%%%%%%%%%%%%%%%%%%%%%%%%%%%%%
\section{Buchsbaum-Eisenbud complexes are strands of  Koszul-\v{C}ech spectral sequences}

	To understand the structure of the disguised residual intersection, we need an explicit expression for a transgression map in a Koszul-\v{C}ech spectral sequence.  This expression 
	is exactly the same as the connecting map in the Buchsbaum-Eisenbud complexes explained in \cite[A2.6]{Eis}.  The surprising fact is that the whole family of Buchsbaum-Eisenbud complexes can be explained by the complexes Koszul and \v{C}ech. This, we will explain in this section.
	\subsection{Preliminaries}
	 We explain the new structures as much complete as possible here and refer to  \cite[A2]{Eis} for the details about the  family of complexes $\mathcal{C}^i_{\bullet}$; Where  $\mathcal{C}^0_{\bullet}$ is the Eagon-Northcott complex and $\mathcal{C}^1_{\bullet}$ is the Buchsbaum-Rim complex.
	 Besides some basic notations, we will recall some notations and the basic structure of the Buchsbaum-Eisenbud family of complexes. We avoid repeating the whole well-known structures. Instead, we keep the same notations and names as in  \cite[A2.6]{Eis}, for the sake of consistency, and refer the reader to loc. cit.  
	
	Let $R$  be a commutative  ring. For an   $R$-module $M$, we denote by $M^*$ the dual $\Hom_R(M,R)$.
	
	Recall that for $R$-module $G$ and any linear map $\varphi\in G^*$ one can define a differential $\partial_\varphi: \bigwedge G\rightarrow \bigwedge G$, called the Koszul differential of the linear map $\varphi$; see [BH, Definition 1.6.1]. This defines an action of $G^*$ on $\bigwedge G$ given by $$\psi\cdot w = \partial_\psi(w).$$ Clearly, if $\varphi,\psi\in G^*$, then $\varphi\cdot\psi\cdot w = -\psi\cdot\varphi\cdot w$ and this gives a natural action of $\bigwedge G^*$ on $\bigwedge G$
.	
	%given by $$\partial_\varphi(g_1\wedge\cdots\wedge g_n) = \sum\limits_{i=1}^{n}(-1)^{i+1}\varphi(g_i)g_1\wedge\cdots\wedge\hat{g_i}\wedge\cdots g_n$$
    
    Let $\Phi: F=R^f\to G = R^g$ be a linear map. Then we can construct the generalized Koszul complex $\mathbb K(\Phi)$, defined in \cite{V} as follows: Let $S(G)$ be the symmetric algebra of $G$ and $\phi$ be the composition of the maps $$F\otimes_R S(G)\xrightarrow{\Phi\otimes 1} G\otimes_R S(G) \rightarrow S(G),$$ where the alst map is just the multiplication on $S(G)$. Then we can construct the Koszul differential $$\partial_\phi: \bigwedge F\otimes_R S(G) \rightarrow \bigwedge F\otimes_R S(G).$$
    The module $\bigwedge G^*$ acts on $\bigwedge F$ as follows. For any   $\eta\in G^*$ and any $v\in \bigwedge F$:
	
	\begin{equation}\label{Eactionf}
	\eta\cdot v := \Phi^*(\eta)\cdot v = (\eta\circ\Phi)\cdot v.
	\end{equation}
	%This makes sense by Proposition \ref{Paction}, as $\eta\circ\varphi\in F^*$. 
	Moreover, the module $G$ acts naturally on $S(G)$ via the multiplication. Therefore, we have an action of the module $G^*\otimes_R G$ on $\bigwedge F\otimes_R S(G)$ given by \begin{equation}\label{Eactions}
	(\eta\otimes u)\cdot(v\otimes w) = \eta\cdot v \otimes uw.
    \end{equation}
    
    The differential of the complex $\mathbb K(\Phi)$ can be explained via this action: it is just the action of the element $c$ which is the pullback of $1$ via the natural evaluation map $G\otimes G^* \rightarrow R$. Moreover for any  $S(G)$-module $M$,  one can define the generalized Koszul complex with coefficients in $M$ given by $\mathbb K(\Phi)\otimes_{S(G)}M$. Again, $G^*\otimes G$ acts on $\bigwedge F \otimes M$ and the differential of $\mathbb K(\Phi)\otimes_{S(G)}M$ is again given by the action of the element $c$.
    
    \begin{defi}\label{Pepsilon} A connecting map of degree $d$  for the map $\Phi$ is a map of the form $$\varepsilon_d: \bigwedge^{d+g} F\rightarrow \bigwedge^d F$$ given by the action of a generator $\gamma$ of $\bigwedge^{g}G^*.$
    \end{defi}

    \begin{defi}\label{Deagon}
	Let $\Phi: F=R^f\to G = R^g$ be a linear map with $f\geq g$. 	The  complex obtained by joining $(K(\Phi)_{(f-g-d)})^*$ and $K(\Phi)_{(d)}$ via a connecting map map $\varepsilon_{d}$ is denoted by $\mathcal{C}^d(\Phi)$. $\{\mathcal{C}^d(\Phi)\}_{d \in \mathbb{Z}}$ is the family of Buchsbaum-Eisenbud complexes.  The complex $\mathcal C^0(\Phi)$ is called the Eagon-Northcott complex, and the complex $\mathcal C^1(\Phi)$ is the Buchsbaum-Rim complex.
	\end{defi}
	Let $T_1,\dots,T_g$ be a basis of $G$. Then $S(G)\simeq R[T_1,\cdots,T_g]$ is a polynomial extension and has  the standard  grading, and the complex $\mathbb K(\Phi)$ is graded as well. 
    
	In terms of the basis $T_1,\dots,T_g$ the map $\Phi$ has a decomposition $$\Phi(w) = \phi_1(w)T_1+\cdots \phi_g(w)T_g,$$ where $\phi_i \in F^*$ for all $i$. In this case the differential $\partial_\phi$ of $\mathbb K(\Phi)$ can be expressed as $$\partial_\phi(w) = \partial_{\phi_1}(w)T_1+\cdots+\partial_{\phi_g}(w)T_g.$$
	
	The connecting map $\varepsilon_d$ can also be easily described in terms of the basis $T_1,\dots,T_g$.
	
%		\begin{defi}\label{Dsdual}
%		Let $S=R[T_1,\dots,T_g] = \bigoplus S_{(d)}$, graded by the total degree. Then the dual $S^*=\Hom_R(S,R)=\bigoplus (S_{(d)})^*$ is a graded $S$-module. We write $(T_1^{\alpha_1}\cdots  
%		T_g^{\alpha_g})'$ to be the basis element of $(S_{(d)})^*$ that is dual to  $(T_1^{\alpha_1}\cdots  T_g^{\alpha_g})$. 
%	\end{defi}
%	The module $S^*$ has a natural structure of a graded $S$-module. For $f\in S,\varphi \in S^*$, define $$f\cdot\varphi(x)=\varphi(f\cdot x)$$ for all $x\in S$. Under this module structure, it's easy to see that $$T_i\cdot(T_1^{\alpha_1}\cdots  T_g^{\alpha_g})'=(T_1^{\alpha_1}\cdots  T_i^{\alpha_i-1}\dots T_g^{\alpha_g})'$$

	Let $T_1',\dots,T_g'$ be the basis of $G^*$ which is dual to $T_1,\dots,T_g$. Then the associated connecting map, $\varepsilon_d$, associated to the generator $T_1'\wedge\cdots\wedge T_g'$ is given by $$w \rightarrow \partial_{\phi_1}\cdots\partial_{\phi_g}(w).$$ 
	
	\subsection{The New Structure}\label{Snewstructure} Let $R$ be a commutative ring, $\Phi: F= R^f \to G=R^g, f\geq g$ a linear map and $K_\bullet=\mathbb K(\Phi)$ it's generalized Koszul complex. Let $T_1,...,T_g$ be a basis for $G$ and $\Sym(G)=S=R[T_1,\cdots,T_g]$. Moreover, let  $\check C^\bullet_{\ft}$ be the \v{C}ech complex of $S$ with respect to the sequence $\mathfrak t=(T_1,\cdots ,T_g)$. Consider then the bi-complex $K_\bullet\otimes_S\check C^\bullet_{\ft}$ and write $\mathcal{D}_\bullet = \Tot(K_\bullet\otimes_S\check C^\bullet_{\ft})$.  We display $K_\bullet\otimes_S\check C^\bullet_{\ft}$ as a third-quadrant bi-complex with $K_0\otimes_R C^0_\mathfrak t(S)$  at the origin of the plane.

     We begin to explain  our construction by looking at the spectral sequence coming from the vertical filtration. As $H^i_{\mathfrak t}(S) = 0$ for $i<g$, the spectral sequence collapses on the $g$-th row on the complex 
		
	$$	
	^1E^{-p,-q}_{ver} =\begin{cases}
	0, \text{~if~} q\neq g\\
	H^g_{\mathfrak t}(K_p),\text{otherwise,}
	\end{cases}
	$$
	\begin{equation}
 \xymatrix{
 		^1E^{*,-g}_{ver}:~~ 0\ar[r]& H^g_{\mathfrak t}(K_f)\ar[r]& \cdots \ar[r]& H^g_{\mathfrak t}(K_1)\ar[r]& H^g_{\mathfrak t}(K_0)\ar[r]& 0.\\
	}
	\end{equation}
	
	Since $K_i= S^{\binom{f}{i}}(-i)$ and $H^g_\mathfrak t(S)_{(d)} = 0$ for $d > -g$, we have, for a fixed degree $d$:
	\begin{equation}\label{Ekoszulcechdegree}
	(^1E_{vert})_{(d)}^{*,-g}: 0\rightarrow H^g_{\mathfrak t}(K_f)_{(d)}\rightarrow H^g_{\mathfrak t}(K_{f-1})_{(d)}\rightarrow \dots\rightarrow H^g_{\mathfrak t}(K_{g+d+1})_{(d)}\xrightarrow{\psi_d}H^g_{\mathfrak t}(K_{g+d})_{(d)}\rightarrow 0
	\end{equation}
	
	Then  $$(^2E^{-d-g,-g}_{ver})_{(d)} =(^\infty \E^{-d-g,-g}_{ver})_{(d)} \simeq \coker (\psi_d).$$
	Now, we look at the spectral sequence coming from the horizontal filtration. The second page of this spectral sequence is given by
	$$(^2E^{-p,-q}_{hor})_{(d)}=H^q_{\ft}(H_p(K_{\bullet}))_{(d)}. $$
	
 The $0-$th row of $~^0\E^{*,*}_{hor}$  in degree $d$ is the complex 
 \begin{equation}
	0\rightarrow (K_d)_{(d)} \xrightarrow{\mu_d}\cdots \rightarrow (K_0)_{(d)}\rightarrow 0;
\end{equation}

	so that $$(^2\E_{hor}^{-d,0})_{(d)}\simeq H^0_\mathfrak t(\Ker(\mu_d))\subseteq \Ker(\mu_d).$$ 
	
	By the convergence of the spectral sequences, we have 
	\begin{equation}
	(^\infty \E^{-d-g,-g}_{ver})_{(d)} \simeq H^d(\mathcal D_\bullet)_{(d)}.
	\end{equation} 
	Moreover, there is a filtration
	 $$H^d(\mathcal D_\bullet)_{(d)}=\mathcal F_{d,0}\supseteq\mathcal F_{d,1}\supseteq\cdots $$ such that 
	$$\frac{\mathcal F_{d,i}}{\mathcal F_{d,i+1}}\simeq ~^\infty (\E_{hor}^{-d-i,-i})_{(d)}.$$
	
	We then have a natural surjection $H^d(\mathcal D_\bullet)_{(d)} \to (^\infty E_{hor}^{d,0})_{(d)}$.  Define the map $\tau_d:H^g_\mathfrak t(K_{d+g})_{(d)}\rightarrow (K_d)_{(d)}$ to be the composition
	
	\begin{equation}\label{Dtd}
	H^g_\mathfrak t(K_{d+g})_{(d)} \rightarrow \coker(\psi_d) \xrightarrow{\sim} H^d(\mathcal D_\bullet)_{(d)}\rightarrow(~^\infty E^{-d,0}_{hor})_{(d)} \hookrightarrow (~^1E^{-d,0}_{hor})_{(d)} = \Ker(\mu_d)\hookrightarrow(K_d)_{(d)}.
	\end{equation}
	
	We define the complex $\mathcal K_d(\Phi)$ to be the  complex 
	\begin{equation}\label{Ekccomplex}
	0 \rightarrow H^g_\mathfrak t(K_f)_{(d)} \rightarrow\dots\rightarrow H^g_\mathfrak t(K_{d+g})_{(d)}\xrightarrow{\tau_d}(K_d)_{(d)}\rightarrow\dots\rightarrow (K_0)_{(d)}\rightarrow 0   
	\end{equation}
	
	The principal theorem of this chapter is the following:
	\begin{thm}\label{Tkcen}
		Let $\Phi: F = R^f \rightarrow G = R^g$ be a linear map  with $f\geq g$. Then the complexes $\mathcal{C}^d(\Phi)$ and $\mathcal K_d(\Phi)$ are isomorphic for $d\leq f-g$.
	\end{thm}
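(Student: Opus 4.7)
The plan is to construct an explicit isomorphism of complexes $\mathcal K_d(\Phi) \cong \mathcal C^d(\Phi)$ in the range $d \leq f-g$ by matching the two halves of each complex separately and then verifying that the connecting maps $\tau_d$ and $\varepsilon_d$ coincide under the identifications. The assumption $d\leq f-g$ ensures that both halves on each side are non-zero, so the matching is over the whole complex.

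For the right halves there is essentially nothing to check: by the definition (\ref{Ekccomplex}) the right part of $\mathcal K_d(\Phi)$ is the strand $(K_\bullet)_{(d)}$ of the generalized Koszul complex, which equals the right half $K(\Phi)_{(d)}$ of $\mathcal C^d(\Phi)$; the differentials agree because $\dhor$ along the bottom row of the Koszul-\v Cech double complex is, tautologically, the Koszul differential $\partial_\phi$ of $\mathbb K(\Phi)$.

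For the left halves the identification goes through graded local duality on $S=R[T_1,\ldots,T_g]$. One has $H^g_\ft(S)_{-g-j} \cong \Hom_R(S_j,R)$ naturally for $j\geq 0$, and since $K_p = \bigwedge^p F \otimes_R S(-p)$ this gives
\[ H^g_\ft(K_p)_{(d)} \;\cong\; \bigwedge^p F \otimes_R (S_{p-d-g})^{*} \qquad (p \geq d+g). \]
Setting $p = g+d+j$ and invoking the perfect pairing $\bigwedge^{g+d+j}F \otimes \bigwedge^{f-g-d-j}F \to \det F \cong R$, this matches, up to the invertible twist by $\det F$, the $j$-th term $(\bigwedge^{f-g-d-j}F \otimes S_j)^{*}$ of $(K(\Phi)_{(f-g-d)})^{*}$. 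The differential of the complex in (\ref{Ekoszulcechdegree}), obtained by applying $H^g_\ft(-)$ to $\partial_\phi$, becomes under this identification the transpose of the Koszul differential, i.e.\ the differential of $(K(\Phi)_{(f-g-d)})^{*}$.

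The crucial and most delicate step is to identify $\tau_d$ with $\varepsilon_d$. Under the previous isomorphisms, $\tau_d$ becomes a map $\bigwedge^{g+d} F \to \bigwedge^d F$ (since $S_0 = R$), while $\varepsilon_d$ is the action of the generator $T_1'\wedge\cdots\wedge T_g' \in \bigwedge^g G^*$, i.e.\ the $g$-fold composition $\partial_{\phi_1}\circ\cdots\circ\partial_{\phi_g}$. To evaluate $\tau_d(\xi)$ via the recipe (\ref{Dtd}), one picks a \v Cech cocycle in row $-g$ representing $\xi$ and performs a $g$-fold liftable zigzag as characterized in Theorems \ref{Tliftctrit} and \ref{Timd}: each horizontal step applies $\partial_\phi = \sum_{i=1}^g \partial_{\phi_i}T_i$, and each subsequent vertical lift cancels one of the $T_i$ present in the \v Cech denominator, so that after $g$ steps exactly one factor $\partial_{\phi_i}$ is contributed for each index $i$. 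Tracking the signs — which come from the orientation of $\det G^*$ and the alternating nature of the \v Cech cochain — shows that $\tau_d$ coincides with $\varepsilon_d$ up to the universal $\det F$-twist, which is absorbed into the left-half identification. Making this zigzag explicit is precisely the content of the forthcoming Theorem \ref{Tlifting}, and the book-keeping there is the main technical obstacle. Assembling the three matchings yields the isomorphism $\mathcal K_d(\Phi) \cong \mathcal C^d(\Phi)$.
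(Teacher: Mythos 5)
Your proposal is correct and follows essentially the same route as the paper: the right halves agree tautologically, the left halves are matched via the perfect pairing $S_{(j)}\otimes_R H^g_\ft(S)_{(-j-g)}\to H^g_\ft(S)_{(-g)}\simeq R$ (the paper's Proposition \ref{Prank}), and the identification $\tau_d=\varepsilon_d$ is reduced to the explicit $g$-fold zigzag lifting, which is exactly the content of Theorem \ref{Tlifting}. Your extra care with the $\det F$-twist in the left-half identification is a welcome clarification of a point the paper passes over silently, but it does not change the argument.
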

	 Fix $d\leq f-g$. The complexes $\mathcal C^d(\Phi)$ and $\mathcal K_d(\Phi)$ are isomorphic at the right side of the joining maps $\tau_d$ and  $\varepsilon_{d}$ as they are both the generalized Koszul complex of $\Phi$. Therefore we must study the left parts and the joining map of both complexes. 
	
	\begin{prop}\label{Prank}
		The left parts of $\mathcal C^d(\Phi)$ and $\mathcal K_d(\Phi)$ are isomorphic.
	\end{prop}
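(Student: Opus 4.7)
The plan is to match the left parts term-by-term via two canonical dualities and then verify that the differentials coincide. Recall that the left part of $\mathcal K_d(\Phi)$ has, at position $i$ (for $d+g\le i\le f$), the module $(H^g_\ft(K_i))_{(d)}$, while the left part of $\mathcal C^d(\Phi)$ is $(K(\Phi)_{(f-g-d)})^{*}$, whose spot $j$ (for $0\le j\le f-g-d$) is $\bigl(\bigwedge^j F\otimes_R S_{f-g-d-j}\bigr)^{*}$.

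I first compute the modules on each side. Since $K_i=\bigwedge^i F\otimes_R S$ and local cohomology commutes with a free $R$-factor, $(H^g_\ft(K_i))_{(d)}=\bigwedge^i F\otimes_R (H^g_\ft S)_{d-i}$. Graded local duality for $S=R[T_1,\dots,T_g]$---equivalently the explicit ``inverse-monomial'' description of $H^g_\ft(S)$ read off from the top \v Cech module---supplies a canonical isomorphism $(H^g_\ft S)_{-g-k}\simeq \Hom_R(S_k,R)$ for every $k\ge 0$. Setting $k=i-d-g$ yields
$$(H^g_\ft(K_i))_{(d)}\ \simeq\ \bigwedge^i F\otimes_R \Hom_R(S_{i-d-g},R).$$
Under the reindexing $j=f-i$, the spot on the other side becomes
$$\bigl(\bigwedge^{f-i}F\otimes_R S_{i-g-d}\bigr)^{*}\ \simeq\ \bigl(\bigwedge^{f-i}F\bigr)^{*}\otimes_R \Hom_R(S_{i-g-d},R).$$
Fixing a generator of $\bigwedge^f F$ (the same orientation that is used to define $\varepsilon_d$), the perfect wedge pairing $\bigwedge^{i}F\otimes \bigwedge^{f-i}F\to \bigwedge^f F\simeq R$ yields $(\bigwedge^{f-i}F)^{*}\simeq \bigwedge^i F$. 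Hence the modules match spot by spot.

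Finally, I check that the differentials agree. The differential on the left of $\mathcal K_d(\Phi)$ is the restriction of $H^g_\ft(\partial_\phi)$ to the degree-$d$ strand, and the one on $\mathcal C^d(\Phi)$ is the $R$-dual of $\partial_\phi$ on the corresponding strand of $K(\Phi)_{(f-g-d)}$. On an elementary tensor $w\otimes u^{\vee}\in\bigwedge^i F\otimes \Hom_R(S_k,R)$, both send it to $\sum_{\ell}\partial_{\phi_\ell}(w)\otimes (T_\ell\cdot -)^{\vee}(u^{\vee})$: this is just the statement that contraction by $\phi_\ell$ on $\bigwedge^\bullet F$ is dual, under the wedge pairing, to $\phi_\ell\wedge-$, while multiplication by $T_\ell$ on $S$ is dual to $(T_\ell\cdot -)^{\vee}$ on $\Hom_R(S_\bullet,R)$. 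The main---and only nontrivial---obstacle is the sign and orientation bookkeeping in the wedge duality; fixing the generator of $\bigwedge^f F$ once and for all (and using it consistently in the definition of $\varepsilon_d$) removes this ambiguity and makes the whole isomorphism canonical.
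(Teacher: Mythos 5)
Your proof is correct and follows essentially the same route as the paper: both rest on the inverse-polynomial (perfect-pairing) identification $(H^g_\ft S)_{(-g-k)}\simeq (S_{(k)})^*$ to match the modules, and both check that the differentials—each being $\sum_\ell \partial_{\phi_\ell}\otimes(T_\ell\cdot)$ in the appropriate guise—commute with this identification. The only cosmetic difference is that you also insert the wedge-pairing $(\bigwedge^{f-i}F)^*\simeq\bigwedge^{i}F$ to start from the literal definition of the left part as $(K(\Phi)_{(f-g-d)})^*$, whereas the paper uses the standard Eisenbud normal form $\bigwedge^{a}F\otimes(S_{(b)})^*$ directly and phrases the differential check abstractly via the action of $c\in G^*\otimes G$.
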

	\begin{proof}
	Recall that $H^g_\mathfrak t(S)$ has an inverse polynomial structure, and $H^g_\mathfrak t(S)_{(-g)}$ is a free $R$-module generated by the monomials $\frac{1}{T_1^{\alpha_1}\dots T_g^{\alpha_g}}$ with $\sum_{i=1}^g \alpha_i = g, \alpha_i \geq 1$.
	Thus there is  a perfect pairing $$S_{(d)}\otimes_RH^g_\mathfrak t(S)_{(-d-g)}\longrightarrow H^g_\mathfrak t(S)_{(-g)} \simeq R$$ given by multiplication. The isomorphism $(S_{(d)})^* \simeq H^g_\mathfrak t(S)_{(-d-g)}$ induced by this pairing takes the element $(T_1^{\alpha_1}\dots T_g^{\alpha_g})'$ to the element $\frac{1}{T_1^{\alpha_1+1}\dots T_g^{\alpha_g+1}}$.
	Using this paring, we have the following isomorphisms:
	$$\mathcal K_d(\Phi)_{d+i} = H^g_\mathfrak t(K_{g+d+i-1})_{(d)}\simeq$$ 
	$$H^g_\mathfrak t(\bigwedge^{d+g+i-1}F \otimes_R S(-g-d-i+1))_{(d)}\simeq$$
	$$\bigwedge^{g+d+i-1}F\otimes_RH^g_\mathfrak t(S(-g-d-i+1))_{(d)}\simeq$$ $$\bigwedge^{g+d+i-1}F\otimes_RH^g_\mathfrak t(S)_{(-g-i+1)} \simeq$$ $$\bigwedge^{g+d+i-1}F\otimes_R (S_{(i-1)})^* \simeq (\mathcal{C}^d)_{d+i},$$
	hence components of the complexes are isomorphic.\\
	For the differentials, notice that the left part of $\mathcal C^d$ is a strand of the generalized Koszul complex of $\Phi$ with coefficients in $S^*$ and the left part of $\mathcal K_d(\Phi)$ is a strand of the generalized Koszul complex of $\Phi$ with coefficients in $H^g_\mathfrak t(S)$. Both differentials are induced by the action of the element $c \in G^*\otimes G$ defined earlier in this section, and this action commutes with the isomorphism induced by the above perfect pairing. The proposition is therefore proved. 
	\end{proof}

It remains to analyze the joining maps $\tau_d: H^g_\mathfrak t(K_{g+d})_{(d)}\rightarrow (K_d)_{(d)}$ defined in (\ref{Dtd}) and $\varepsilon_{d}$ defined in Definition \ref{Pepsilon}.

We use the next notations in the sequel. 
	\begin{nota} \label{Ndenominators}
	\begin{itemize}
	    \item {	For any $L\subset \{1,\cdots ,g\}$ with $|L| = \ell$,  we define $\sgn(L)$ to be the sign of the permutation that put the elements of $L$ on the first $\ell$ positions.}
	\item{	For the set  of variables $T_1,\cdots ,T_g$  and $L\subset\{1,\cdots ,g\}$. We define $T_L := \prod\limits_{j\notin L}T_j$.}
		\item{	For a set of maps $\{\varphi_{\ell_i}: i=1,\cdots, m\}$ in $F^*$, we use the notation $\partial_L$ to denote the composition $\partial_{\varphi_{\ell_1}}\cdots\partial_{\varphi_{\ell_m}}$ where $L=\{\ell_1,\cdots, \ell_m\}$. }
		\end{itemize}
	\end{nota}
	
	Using the theorems developed in section \ref{SSS} about the structure of the maps in spectral sequences, we have an explicit description for $\tau_d$.	
	\begin{thm}\label{Tlifting}
		Let  $R$ be a commutative  
		ring, $\Phi: F = R^f \to G = R^g$ a linear map  with $f\geq g$ and $K_\bullet=\mathbb K(\Phi)$ the generalized Koszul Complex. 
		
		Let $T_1,\dots,T_g$ be a basis of $G$,  $\Phi=\sum_{i=1}^r \phi_i\cdot T_i$    and $S:=S(G)=R[T_1,\cdots ,T_g]$.
		
		Consider the double complex 
		$K_\bullet\otimes_S\check{C}^\bullet_\mathfrak t(S)$ and its horizontal and vertical spectral sequences. Then, for each $0\leq d \leq f-g$ and $w\in K_{g+d}$, the element \begin{equation}\label{eqm}
		   	m = w\otimes \frac{1}{T_1\dots T_g}\in K_{g+d}\otimes\check{C}^g_{\mathfrak{t}}(S) 
		\end{equation} 
		is $g$-liftable. Moreover, the $i$-th lift of this element is, up to a sign,  
		$$\sum\limits_{|L|=i}\partial_L(w)\otimes \sgn(L)\frac{1}{T_L} .$$ 
		In particular, 
		$$\tau_d(m) = \partial_{\phi_1}\cdots\partial_{\phi_g}(m).$$
		 So that $\tau_d=\varepsilon_d$.
	\end{thm}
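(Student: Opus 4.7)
The plan is to verify the $g$-liftability by explicitly exhibiting the lifts and then to identify $\tau_d(m)$ via the spectral-sequence machinery of Section \ref{SSS}. I proceed by induction on $i$, constructing the claimed $i$-th lift
\[
a_i \;=\; \sum_{|L|=i}\sgn(L)\,\partial_L(w) \otimes \frac{1}{T_L}
\]
for $0 \leq i \leq g$, with the lift relations of Definition \ref{D}. The base case $a_0 = m$ is automatic since $\check C^{g+1}_{\mathfrak t}(S) = 0$, so $m$ is a vertical cycle.

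For the inductive step, I would unpack the horizontal differential $d_h = \partial_\phi = \sum_{k=1}^g \partial_{\phi_k} T_k$ applied to $a_{i-1}$ and, separately, compute the \v{C}ech differential $d_v$ applied to the candidate $a_i$, then match them term by term. The combinatorial ingredients are threefold. First, multiplication by $T_k$ sends $\frac{1}{T_L}$ to a non-zero element of the adjacent \v{C}ech degree precisely when $k \notin L$, yielding $\frac{1}{T_{L\cup\{k\}}}$ up to an alternation coming from the position of $k$ in $L\cup\{k\}$. Second, the \v{C}ech coboundary of $\frac{1}{T_L}$ is an alternating sum over $j\in L$ of terms $\pm\frac{1}{T_{L\setminus\{j\}}}$. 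Third, the Koszul anticommutation $\partial_{\phi_k}\partial_{\phi_\ell}=-\partial_{\phi_\ell}\partial_{\phi_k}$ lets one move $\partial_{\phi_k}$ past $\partial_L$ with a controlled sign. The choice of $\sgn(L)$ in Notation \ref{Ndenominators} is precisely tailored so that, after reindexing the double sum from $a_{i-1}$ by the newly added variable, the three sign contributions combine to give $d_h(a_{i-1})=\pm d_v(a_i)$, the overall $(-1)^i$ being absorbed by Remark \ref{Rlifthomology}.

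At $i=g$ the only subset is $L=\{1,\dots,g\}$, with $T_L=1$, so $a_g = \pm\,\partial_{\phi_1}\cdots\partial_{\phi_g}(w)\otimes 1$ lies in $(K_d\otimes S)_{(d)}$; moreover the degree of $\partial_{\phi_1}\cdots\partial_{\phi_g}(w)$ forces $a_g$ to land in $(K_d)_{(d)}$, which is already in the kernel of $\mu_d$ by the very construction of the strand. Applying Proposition \ref{Tkerd} (transposed to the horizontal filtration used to build $\tau_d$) together with the definition of $\tau_d$ in \eqref{Dtd} therefore identifies $\tau_d(m)$ with $\partial_{\phi_1}\cdots\partial_{\phi_g}(w)\in (K_d)_{(d)}$. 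By the explicit formula for the connecting map recalled before Subsection \ref{Snewstructure} (namely $w\mapsto \partial_{\phi_1}\cdots\partial_{\phi_g}(w)$ for the generator $T_1'\wedge\cdots\wedge T_g'$), this coincides with $\varepsilon_d(w)$, yielding $\tau_d=\varepsilon_d$. The main obstacle throughout is the sign bookkeeping: the three sign sources above must be reconciled consistently at each inductive step. Once that is done, the structural content is transparent: every lift step trades one denominator variable $T_k$ for one more application of $\partial_{\phi_k}$, and after $g$ steps exactly the product defining $\varepsilon_d$ materializes.
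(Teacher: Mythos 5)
Your proposal is correct and follows essentially the same route as the paper: an induction on the lift index in which $d_h$ of the $i$-th candidate is matched with $d_v$ of the $(i+1)$-st, using that $\partial_{\phi_k}\partial_L=0$ for $k\in L$, followed by reading off $\tau_d(m)$ as the last entry $a_g$ of the resulting $g$-lift sequence via the definition of $\tau_d$ in \eqref{Dtd}. The only small correction is that the final identification rests on the convergence statement (Proposition \ref{Tconvergence}, which matches lift sequences with total cohomology classes and their images in the two filtrations) rather than on a transposed Proposition \ref{Tkerd}, and the paper likewise declines to track the signs, noting they depend only on $g$.
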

	%-------------------------------------------------------------
	\begin{proof} 
	%First, we explain the "up to a sign" expression on the statement of the theorem: to have anticommutative diagram on the bi-complex, we need to multiply some vertical maps by $-1$. 
	
	In the course of the proof  we do all the liftings without concerning  the  signs coming from the bi-complex, for the sake of clarity. Finally, we stress that the involved signs  depends only on $g$.
	
				In $(^0\E_{ver}^{-d-g,-g})_{(d)}$ the differential $d^0=\dvert$ is zero. By (\ref{Ekoszulcechdegree}), $(^1\E_{ver}^{-d-g+1,-g})_{(d)}=0$ and $(^r\E_{ver}^{-d-g+r,-g-r+1})_{(d)}=0$ for all $r\geq 2$. Therefore all differentials $d^r$ are zero in $(^r\E_{ver}^{-d-g,-g})_{(d)}$ and $(^0\E_{ver}^{-d-g,-g})_{(d)}= (^\infty\Z_{ver}^{-d-g,-g})_{(d)}$. Then by Theorem \ref{Tliftctrit}, $m$ is $g$-liftable. 
				
				We now show the formula for the $i$th lift by using an induction on $i$. The process  of lifting is by applying the horizontal map, Koszul differentials,  in each step and then taking the pre-image by the vertical maps which are the \v{C}ech differentials. 
				\begin{equation}\label{Edif1}
		d_h(m)=\sum\limits_{i=1}^g\partial_i(w)\otimes \frac{T_i}{T_1\dots T_g}
		\end{equation}

		It's immediate to see that (\ref{Edif1}) is a \v{C}ech boundary, and the 1-lift is given by \begin{equation}
		\partial_1(w)\otimes\frac{1}{T_{\{1\}}} - \partial_2(w)\otimes\frac{1}{T_{\{2\}}}+\cdots+(-1)^g \partial_g(w)\otimes\frac{1}{T_{\{g\}}}, 
		\end{equation}
		
		as required. 
		
		Now suppose that $i\geq 1$ and that  the  $i$-th lift of $m$ is given by 
		\begin{equation}\label{Ecoef3}m_i=\sum\limits_{|L|=i}\partial_L(w)\otimes \sgn(L)\frac{1}{T_L}.
		\end{equation}

		Applying the Koszul differential, we then have 
		
		\begin{equation}\label{Ecoef4}
		    d_h(m_i)=\sum\limits_{|L|=i}(\sum\limits_{i=1}^g \partial_i\partial_L(m)T_i) \otimes \sgn(L)\frac{1}{T_L}.
		\end{equation}
			Notice that if $i\in L$, then $\partial_i\partial_L = 0$.  This shows that $d_h(m_i)$ is the image of  the \v{C}ech's map of the element
			\begin{equation}\label{Ecoef5}m_{i+1}=\sum\limits_{|L|=i+1}\partial_L(w)\otimes \sgn(L)\frac{1}{T_L}.
		\end{equation}
	as desired. 
	
		To see that $\tau_d = \varepsilon_d$, we analyze the construction of $\tau_d$ given in (\ref{Dtd}). Since $$ H^g_\mathfrak t(\bigwedge\limits^{d+g}(S(-d-g))^f)\simeq\bigwedge\limits^{d+g}F\otimes H^g_\mathfrak t(S)_{(-g)},$$ any element of $\coker(\varphi_d)$  is presented by  an element
	$$	m = w\otimes \frac{1}{T_1\dots T_g}\in K_{g+d}\otimes\check{C}^g_{\mathfrak{t}}(S). $$

		By theorem \ref{Tconvergence}, the isomorphism $\coker(\varphi_d)\simeq H^d(\mathcal D_\bullet)_{(d)}$ sends $m$ to the cohomology class $(m,-a_1,...,(-1)^ga_g)\in H^d(\mathcal D_\bullet)_{(d)}$ where $(m,a_1,\cdots,a_g)$ is a $g$-lift sequence for $m$.
		
		Again by Theorem \ref{Tconvergence}, the map $H^d(\mathcal D_\bullet)_{(d)}\rightarrow ^\infty\E^{-d,0}$ sends the cohomology class $(m,-a_1,...,(-1)^ga_g)$ to the element $(-1)^ga_g$. As $a_g$ is just the $g$-lift of $m$ calculated above. Comparing with Definition \ref{Pepsilon}, we see that $\tau_d = \varepsilon_d$ up to a sign.
		\end{proof}
	The proof of Theorem \ref{Tkcen} now follows from Proposition \ref{Prank} and Theorem \ref{Tlifting}.
	%and the complexes $\mathcal K_d(\Phi)$ are isomorphic to the Buchsbaum-Eisenbud complexes 
 $\mathcal C^d_\bullet(\Phi)$.	
%%%%%%%%%%%%%%%%%%%%%%%%%%%%%%%%%%%%%%%%%%%%%%%%%%%%%%%%%%%%%%%%%%%%%%%%%%%%%%%%%%%%

%%%%%%%%%%%%%%%%%%%%%%%%%%%%%%%%%%%%%%%%%%%%%%%%%%%%%%%%%%%%%%%%%%%%%%%%%%%%%%%%	
	\section{Generators of Residual Intersections}
	In this section, we  recall the definition of  ``Residual Approximation Complexes" and ``Disguised Residual Intersections". Based on the materials developed so far, we show how Kitt ideals (defined in the introduction) can approximate a general colon ideal $(\fa:I)$. By {\it general colon ideal} we mean,  in most of the coming results, $J=\fa:I$  needs not to be a residual intersection. However, it is not true that $\Kitt(\mathfrak a,I)=(\fa:I)$ for any colon ideal, see  Example \ref{Example}.
	
	 %we apply the above theorems on the theory of residual intersections. more precisely, we give an explicit description of the disguised residual intersection introduced in [Ha] and answer the conjecture [HN, Conjecture].
	\subsection{Definitions and known results}
	\begin{defi}\label{Dresidual}
		Let $R$ be a commutative ring and $I\subset R$ an ideal of grade $g$ and $s\geq g$. We say that an ideal $J$ is an algebraic $s$-residual intersection of $I$, if $J = (\mathfrak{a}:I)$, with $\mathfrak{a}=(a_1,\dots,a_s)\subset I$, and ht($J$)$\geq s $. Moreover, we say that \begin{enumerate}
			\item $J$ is an arithmetic $s$-residual intersection if $\mu_{R_\mathfrak{p}}(I/\mathfrak{a})_\mathfrak{p}\leq 1$ for all prime ideals $\mathfrak{p}$ with $\Ht(\mathfrak{p}) = s$
			\item $J$ is a geometric $s$-residual intersection if ht$(I+J)\geq s+1$
		\end{enumerate} 
	\end{defi}
	
	In  \cite{Ha}, \cite{HN}, \cite{CNT} the authors tackle the problem of the Cohen-Macaulayness of an $s$-residual intersection $J = (\fa:I)$ without assuming that $I$ satisfies the $G_s$-condition\footnote {An ideal $I$ satisfies the $G_s$-condition if $\mu(I_{\fp})\leq \Ht(\fp)$ for any prime ideal $ \fp \in {\rm V}(I)$ of height at most $s-1$.}. These works rely upon the construction of a family of complexes, called the {\it Residual Approximation Complexes}, that we now describe.
	 
	Let $R$ be a commutative  ring, $I = (f_1,\cdots ,f_r)$ an ideal of R and $\mathfrak{a} = (a_1,\cdots ,a_s)\subseteq I$. For each $1\leq j\leq s$, let  $a_j=\sum\limits_{i=1}^rc_{ij}f_i$ for some $c_{ij}\in R$. Let $S = R[T_1,\dots,T_r]$ be a standard  polynomial extension of $R$ and $\gamma_j=\sum\limits_{i=1}^rc_{ij}T_i\in S_1$. We then consider the $\mathcal{Z}$-complex, $\mathcal{Z}_\bullet(\ff;R)$, of Herzog-Simis-Vasconcelos \cite{HSV} 
	\begin{equation}
	0\rightarrow Z_r(\ff;R)\otimes_RS(-r)\rightarrow Z_{r-1}(\ff;R)\otimes_RS(-r+1)\rightarrow\dots\rightarrow Z_0(\ff;R)\otimes_RS\rightarrow 0
	\end{equation}
	
	where $Z_i(\ff;R)$ stands for the $i$-th module of Koszul cycles of the sequence $\ff=(f_1,\dots,f_r)$. 
	
	In the sequel we consider the bold form
	$\ff, \aa$ and $\ag$ for the sequences $f_1,\cdots ,f_r$, $a_1,\cdots ,a_s$ and $\gamma_1,\cdots, \gamma_s$.
	
	Now, consider a new complex, given by $$\mathcal D_\bullet =\Tot(\mathcal Z_\bullet(\ff;R)\otimes_SK_\bullet(\ag,S))$$
	where $K_{\bullet}(\ag,S)$ is the Koszul complex $K_\bullet(\gamma_1,\dots,\gamma_s,S)$. 
	The $i$-th component of this complex is given by 
	\begin{equation}
	\mathcal D_i = \bigoplus\limits_{k=i-s}^{\min\{i,r\}}(Z_k(\ff;R)\otimes_RS(-k))\otimes_S S^{\binom{s}{i-k}}(-i+k) \simeq \bigoplus\limits_{k=i-s}^{\min\{i,r\}}(Z_k(\ff;R)\otimes_RS^{\binom{s}{i-k}})(-i) 
	\end{equation}
	We then tensor  $\mathcal D_\bullet$ to the \v{C}ech complex $\check C^\bullet_\mathfrak t(S)$, where $\mathfrak{t}=(T_1,\dots,T_r),$ and repeat the  the same procedure as in Section \ref{Snewstructure} to glue  the horizontal spectral sequence to the vertical spectral sequence. Thence  for each degree $k$, we have the complex 
	\begin{equation}
	_k\mathcal{Z}^+_\bullet : 0\rightarrow H^r_\mathfrak t(\mathcal D_{r+s-1})_{(k)}\rightarrow\dots\rightarrow H^r_\mathfrak t (\mathcal{D}_{r+k})_{(k)} \xrightarrow{\tau_k}(\mathcal D_{k})_{(k)}\rightarrow\dots\rightarrow(\mathcal D_0)_{(k)}\rightarrow0.
	\end{equation}
	%----------------------------------------------------------------------------
	%Should we include \ff and \aa to emphasize the generating set?
	\begin{defi}
		The complex $_k\mathcal Z^+_\bullet$ constructed above is called the $k$-th {\it residual approximation complex} with respect to the generating sets $\ff$ and $\aa$ of $I$ and $\fa$.
	\end{defi}
	
%	\begin{rmk}
		%Notice that, in the construction of the residual approximation complexes, we have made two choices. Firstly, we chose a set of generators for $I$. Secondly a representation of the generators of $\mathfrak{a}$ in terms of the generators of $I$.Therefore the structure of the residual approximation complexes does depend on these choices. Despite this, w
		%Throughout the paper we'll show that $H_0(_0\mathcal Z^+_\bullet)$ is independent of these choices.
		
		%So that it's not clear if the contractions coming out of these complexes are independent of these choices. 
%	\end{rmk}
	
	Having these approximation complexes in hand, the first interesting  property is their  acyclicity.  Here is where  the Cohen-Macaulayness of the ring $R$ and  sliding depth conditions come into play.  We recall the definitions here. 
%===================================================	
	\begin{defi}\label{dsd} Let $(R,\fm)$ be a Noetherian local ring of dimension $d$ and $I=(f_1,...,f_r)=(\ff)$ an ideal. Let $k$  be an integer. We say that the
ideal $I$   satisfies $ \SD_k$ if $\depth(H_i(\ff;R))\geq \min\{d-g,d-r+i+k\}$ for all $i\geq 0$ ; also $\SD$ stands for
 $\SD_0$. Similarly, we say that $I$ satisfies the
sliding depth condition on cycles, $\SDC_k$, at level $t$, if
$\depth(Z_i(\ff;R))\geq \min\{d-r+i+k, d-g+2, d\}$ for all $r-g-t\leq i\leq r-g$. 

$I$ is strongly Cohen-Macaulay, SCM, if $H_i(\ff;R)$ is CM for all $i$. 
\end{defi}

%----------------------------------------	
	We state an acyclicity criterion.
	\begin{thm}\cite[Theorem 2.6]{HN}
		Let $(R,\fm)$ be a  CM local ring of dimension $d$, $I=(f_1,...,f_r)$ an ideal with height $g \geq 1$. Let $s\geq g$ and fix $0\leq k\leq s-g+2$. Suppose that $I$ satisfies $\SD$ and that one the following hypotheses holds
\begin{itemize}
\item[(i)] $r+k\leq s$ or, 
\item[(ii)] $r+k\geq s+1$ and $\depth(Z_i(\ff))\geq \min\{d,d-s+k\}$ for $0\leq i\leq k$.
\end{itemize}
Then for any $s$-residual intersection $J=(\fa:I)$, the complex $_k\mathcal{Z}^{+}_{\bullet}$ is acyclic. Furthermore, $H_0(_k\mathcal Z^+_{\bullet})$ is a Cohen-Macaulay $R$-module of dimension $d-s$.
	\end{thm}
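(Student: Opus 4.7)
The plan is to realise $_k\mathcal{Z}^+_\bullet$ as the $k$-th internal-degree strand extracted from the double complex
\[
\mathcal{E}^{\bullet,\bullet} := \mathcal{D}_\bullet \otimes_S \check{C}^\bullet_\mathfrak{t}(S),
\]
exactly in the spirit of the Koszul-\v{C}ech construction of Section \ref{Snewstructure}. Each $\mathcal{D}_i$ is a finite direct sum of modules of the form $(Z_j(\ff;R) \otimes_R S)(-i)$; since $S$ is $R$-flat and $H^q_\mathfrak{t}(S) = 0$ for $q \neq r$, the vertical \v{C}ech spectral sequence collapses onto the $r$-th row, and Theorem \ref{Tlifting} identifies the connecting map with the transgression $\tau_k$. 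Hence $_k\mathcal{Z}^+_\bullet$ computes the associated graded of the vertical filtration on $H^\bullet(\mathcal{E}^{\bullet,\bullet})_{(k)}$, so its acyclicity translates into a vanishing statement on the total cohomology in the relevant range.

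For that vanishing I would apply the Peskine-Szpiro acyclicity lemma to $_k\mathcal{Z}^+_\bullet$ directly. The right-hand terms $(\mathcal{D}_i)_{(k)}$ with $0 \leq i \leq k$ are finite direct sums of copies of $Z_j(\ff;R)$, so they have depth at least $\min_j \depth Z_j$ over the relevant combinatorial range of $j$; the hypothesis $\SD$ bounds these by $\min\{d-r+j,\,d-g+2,\,d\}$. The left-hand terms $H^r_\mathfrak{t}(\mathcal{D}_{r+i})_{(k)}$ with $k \leq i \leq s-1$, after invoking the perfect pairing $H^r_\mathfrak{t}(S)_{(-r-\ell)} \simeq (S_\ell)^*$ exactly as in Proposition \ref{Prank}, are again finite $R$-modules built from the same $Z_j$'s tensored with free $R$-modules of combinatorial rank. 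The dichotomy between hypotheses (i) and (ii) corresponds to whether the indices $j$ actually appearing in the complex are all automatically handled by $\SD$ (case $r+k \leq s$) or whether the supplementary depth condition on $Z_i$ for $0 \leq i \leq k$ must be imposed to cover the low-indexed ones. A term-wise tally then verifies the depth hypothesis $\depth(_k\mathcal{Z}^+_i) \geq i$ of the acyclicity lemma, and the fact that any potential $H_i(_k\mathcal{Z}^+_\bullet)$ is supported inside $V(I+J)$ furnishes the remaining input on higher homologies via localization at primes of small height.

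Once acyclicity is in hand, the depth lemma propagated along the acyclic complex yields $\depth H_0(_k\mathcal{Z}^+_\bullet) \geq d - s$. Since $H_0$ is annihilated by $J$ and $\Ht(J) \geq s$, one has $\dim H_0 \leq d - s$, hence $\depth = \dim$ and Cohen-Macaulayness of the stated dimension follows. The main obstacle will be the tight depth bookkeeping in the left portion: the interplay between the internal degree $k$, the \v{C}ech-dual combinatorics of the monomial basis of $H^r_\mathfrak{t}(S)$, and the specific bounds supplied by $\SD$ requires a careful index analysis, and this is precisely where the split into cases (i) and (ii) becomes unavoidable, since in case (ii) the natural $\SD$ bound fails to dominate the homological position index for the low-degree Koszul cycles and must be supplemented by hand.
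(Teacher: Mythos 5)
First, a point of reference: the paper you are reading does not prove this statement at all; it is quoted verbatim from \cite[Theorem 2.6]{HN} and used as a black box (its proof lives in that reference and in its precursor \cite[Theorem 2.11]{Ha}). Measured against that source, your outline does capture the correct architecture: the strand-of-Koszul--\v{C}ech realization of $_k\mathcal{Z}^{+}_{\bullet}$, the identification of the connecting map via the collapse of the vertical spectral sequence, acyclicity via a Peskine--Szpiro-type argument fed by localization, and Cohen--Macaulayness of $H_0$ by chasing depth along the acyclic complex and comparing with $\dim R/J\le d-s$.

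The gap is that everything you defer is precisely where the theorem lives. (1) The ``term-wise tally'' verifying $\depth(_k\mathcal{Z}^{+}_i)\ge i$ is not routine: for the dualized left-hand terms $H^r_{\ft}(\mathcal D_{r+i-1})_{(k)}$ the indices $j$ of the cycles $Z_j$ occurring run down to $i+r-1-s$, and the bounds on $\depth Z_j$ that SD actually yields (via the exact sequences $0\to Z_{j+1}\to K_{j+1}\to B_j\to 0$ and $0\to B_j\to Z_j\to H_j\to 0$) do not formally dominate $i$ for all $i\le s$; moreover, for the CM conclusion you need the stronger bound $\depth(_k\mathcal{Z}^{+}_i)\ge d-s+i$, not merely $\ge i$. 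Executing this count is exactly what forces the dichotomy (i)/(ii), and no proof exists until it is done. (2) The input ``higher homologies have depth $0$'' is not furnished by a support statement alone: one needs an induction on $d$, localizing at all non-maximal primes, which in turn requires checking that SD, the supplementary depth hypotheses on the $Z_i$, and the residual-intersection condition on $J$ all localize. The support claim itself (acyclicity of the localized complex off $V(J)$, where $_k\mathcal{Z}^{+}_{\bullet}$ degenerates to a Koszul-type complex on $\fa$) is asserted but not argued, and you state it for $V(I+J)$, which is a strictly stronger claim than what the localization step needs. (3) For $\dim H_0\le d-s$ you invoke ``$H_0$ is annihilated by $J$''; for $k>0$ this is not immediate from the construction and requires the separate fact that $\Supp H_0(_k\mathcal{Z}^{+}_{\bullet})\subseteq V(J)$. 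In short, your proposal is an accurate table of contents for the proof in \cite{HN}, but the depth bookkeeping you flag as ``the main obstacle'' is the proof, not an appendix to it.
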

%-------------------------------------------------	
	The last map of the complex $_0\mathcal Z^+_{\bullet}$ is 
	\begin{equation}\label{tau0}
	\tau_0:H^r_\mathfrak t(\mathcal{D}_{r})_0\rightarrow(D_0)_0 \simeq R; 
	\end{equation} 
	Hence $H_0(_0\mathcal{Z}^+_{\bullet}) \simeq R/K$ for some ideal  $K\subset R$ ideal. This ideal is called the {\it  disguised residual intersection}.
More specifically, 	
%-------------------------------------------------	

\begin{defi}\label{Ddisguised}
		Let $R$ be a commutative ring, $I=(f_1,\cdots ,f_r)$  and $\mathfrak a=(a_1,\cdots ,a_s)\subseteq I$  be ideals of $R$ and   $\Phi = (c_{ij})$  be an $r\times s$ matrix such that  $(\aa) = (\ff).\Phi.$
		
		Then  the {\bf  disguised residual intersection} of $I$ with respect to the representation matrix $\Phi$ is the image of the map 
		$$\tau_0:H^r_\mathfrak t(\mathcal D_r)_0 \rightarrow R$$
		 in $_0\mathcal Z_{\bullet}^+$-complex;  we denote it by $K(\aa,\ff,\Phi)$ . 
		
	\end{defi}

	There are some tight relations between the disguised residual intersection and the residual intersection as the following theorems state.
%-------------------------------------------------		
	\begin{thm}\label{TaboutK} Let $R$ be a commutative ring and keep the notations in the Definition \ref{Ddisguised}. Let $J=\fa:_R I$ and $K=K(\aa,\ff,\Phi)$ then 
		\begin{enumerate}
			\item $K\subseteq J$, and $V(K)=V(J)$, \cite[Theorem 2.11]{Ha}.
			\item $J = K$ if $\ff=(a_1,\cdots ,a_s, b)$, \cite[Theorem  4.4]{HN}.
		       \item If $R$ is Cohen-Macaulay local ring,  $I$ is an ideal of height $g\geq 1$ satisfying SDC$_1$ at level min$\{s-g,r-g\}$ and $J $ is an arithmetic  $s$-residual intersection then  $K = J$ and it is Cohen-Macaulay of height $s$,  \cite[Theorem 2.11]{Ha}.
		        \item If $R$ is Cohen-Macaulay local ring,  $I$ is an ideal of height $2$ satisfying SD$_1$  and $J$ is any algebraic   $s$-residual intersection then  $K = J$,   \cite[Theorem 4.5]{CNT}.
		 \end{enumerate}
\end{thm}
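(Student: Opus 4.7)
The statement collects four results already in the literature; my plan is to address parts (1)--(2), which rest on direct analysis of the connecting map $\tau_0$, separately from parts (3)--(4), which exploit acyclicity of the full residual approximation complex $_0\mathcal Z^+_\bullet$.

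For (1), I would first establish $K\subseteq J$ by a direct analysis of $\tau_0$: Theorems \ref{Tliftctrit} and \ref{Timd} provide an explicit formula for the image of $\tau_0$ in terms of iterated \v{C}ech--Koszul lifts in the bi-complex $\mathcal D_\bullet\otimes_S\check C^\bullet_\mathfrak t(S)$, and tracing such a lift through the multiplicative structure of the Koszul complex on $\ff$ shows that the resulting elements annihilate $I$ modulo $\fa$. The radical equality $V(K)=V(J)$ then follows by a standard localization argument: since $K\subseteq J$ one has $V(J)\subseteq V(K)$, and away from $V(I+J)$ the complex $_0\mathcal Z^+_\bullet$ degenerates to a complex whose zeroth homology is $R/J$ locally, forcing equality of supports.

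For (2), the hypothesis $\ff=(a_1,\dots,a_s,b)$ gives $\mu(I/\fa)\leq 1$; the representation matrix $\Phi$ can be chosen as the identity on its top $s$ rows with zeros below, so $\gamma_j=T_j$ for $1\leq j\leq s$. The Koszul complex $K_\bullet(\ag,S)$ then has a particularly simple form, and a direct computation of $\tau_0$ identifies its image with $\fa:_R b=\fa:_R I$.

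Parts (3) and (4) both proceed in two stages: first, the acyclicity theorem preceding the statement guarantees that $_0\mathcal Z^+_\bullet$ resolves $R/K$ under the respective sliding depth hypothesis; second, one compares $R/K$ with $R/J$ via local duality. In case (3), the arithmeticity assumption allows a pointwise check $K_\fp=J_\fp$ at height-$s$ primes $\fp$, which then globalizes, and Cohen--Macaulayness of $R/K$ of height $s$ reads off from the acyclic resolution. In case (4), the height-$2$ hypothesis unlocks the Hilbert--Burch structure of a resolution of $I$, and the duality argument of \cite{CNT} identifies $R/K$ with $R/J$ via their canonical modules. The main obstacle throughout is the control of $\depth Z_i(\ff;R)$, which is precisely the role of the SD$_1$ and SDC$_1$ conditions; once this depth control is in place, the remaining ingredients are homological bookkeeping.
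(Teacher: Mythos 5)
The paper does not actually prove this theorem: it is a recollection of results from the literature, and each item is discharged by citation (\cite[Theorem 2.11]{Ha} for (1) and (3), \cite[Theorem 4.4]{HN} for (2), \cite[Theorem 4.5]{CNT} for (4)). Your outline is therefore reconstructing the cited arguments rather than competing with anything in the text, and at that level it is directionally faithful, but a few points deserve correction. For (1), within the framework of this paper the containment $K\subseteq J$ is a one-line consequence of Theorem \ref{Tgensdisguised2}: for a generator $\zeta_{L_1}\wedge z$ of $K$ and any $f_i$ one has $e_i\wedge\zeta_{L_1}\wedge z=0$ in $K_{r+1}$, so the Leibniz rule gives $f_i\,(\zeta_{L_1}\wedge z)=\pm\, e_i\wedge\partial(\zeta_{L_1})\wedge z\in\fa$, whence $IK\subseteq\fa$; and the radical equality is cleanest via $\Fitt_0(I/\fa)\subseteq K\subseteq J$ together with $V(\Fitt_0(I/\fa))=V(\Ann(I/\fa))=V(J)$, rather than via a degeneration ``away from $V(I+J)$'' --- the problematic primes are exactly those containing $I$ and $K$ but not $J$, and handling them already requires the Fitting-ideal (or independence-of-generators) input. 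For (3), your two stages are not independent: the pointwise check at height-$s$ primes globalizes only because acyclicity makes $R/K$ Cohen--Macaulay of dimension $d-s$, hence unmixed, so that the submodule $J/K$ of $R/K$ vanishes once it vanishes at all height-$s$ primes; the local identification there is an application of item (2) via $\mu((I/\fa)_\fp)\le 1$. For (4), the appeal to a Hilbert--Burch resolution is inaccurate: $I$ is only assumed to have height $2$ and satisfy SD$_1$, not to be perfect, and the argument of \cite{CNT} is a duality between the two ends of $_0\mathcal Z^+_\bullet$ (identifying the canonical module of $H_0$ with data attached to $I/\fa$), not a structure theorem for $I$. None of this constitutes a gap in the present paper, since the theorem is quoted here, not proved; but as a standalone reconstruction your sketch would need these points repaired and substantially more detail to count as a proof.
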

	
	In \cite[Theorem 4.5]{CNT} Chardin-Naeliton-Tran also shows that   if $R$ is Cohen-Macaulay local ring,  $I$ is an ideal of height $g\geq1$ satisfying SD$_1$ then any algebraic $s$-residual  intersection of $I$ is Cohen-Macaulay of height $s$. This  provides  an affirmative answer to the question of Huneke-Ulrich \cite{HU}.
	
%-------------------------------------------------	
	Hassanzadeh-Naeliton proposed the following conjecture in \cite{HN}.
	\begin{conj}\label{conj}
		If $R$ is a  Cohen-Macaulay local ring and $I$ an ideal satisfying SD and $depth(R/I) \geq d-s$. Then  any algebraic $s$-residual intersection of  $I$ coincides with  the disguised residual intersection.
	\end{conj}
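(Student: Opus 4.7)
The strategy is to reduce the equality $\Kitt(\fa,I)=(\fa:I)$ to a local check at minimal primes, using that under $\SD$ together with $\depth(R/I)\geq d-s$, the residual approximation complex $_0\mathcal Z^+_\bullet$ is still acyclic by \cite[Theorem 2.6]{HN} (applied with $k=0$), forcing $R/K$ to be Cohen-Macaulay of dimension $d-s$. In particular $R/K$ has no embedded primes, and its associated primes coincide with the minimal primes of $K$, which by Theorem \ref{TaboutK}(1) coincide with those of $J$. The kernel $J/K$ of the surjection $R/K\twoheadrightarrow R/J$ embeds into $R/K$, so all its associated primes lie among these minimal primes, and $K=J$ is therefore equivalent to the local equality $K_\fp=J_\fp$ at every minimal prime $\fp$ of $K$, each of which has height exactly $s$.

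I split these primes into two classes. If $I\not\subseteq\fp$, then $I_\fp=R_\fp$ and $J_\fp=\fa R_\fp$, so the containment $\fa\subseteq K$ yields immediately $K_\fp=\fa R_\fp=J_\fp$. For the remaining primes $I\subseteq \fp$, and $\fp$ is a minimal prime of $I+J$ of height exactly $s$; after localizing one obtains an $s$-residual intersection $J_\fp=(\fa R_\fp:I_\fp)$ in the $s$-dimensional Cohen-Macaulay local ring $R_\fp$, with $\fp R_\fp$ the unique minimal prime over $I_\fp+J_\fp$. The goal here is to fit $(R_\fp, I_\fp, \fa R_\fp)$ into one of the known cases of Theorem \ref{TaboutK}: either the arithmetic regime of part (3), after verifying that the local cycle-depths acquire the extra unit needed to upgrade $\SD$ to $\SDC_1$ at level $\min\{s-g,r-g\}$ because one has localized to a prime of maximal codimension $s$, or the cyclic-quotient regime of part (2) when $I_\fp$ turns out to be generated by $\fa R_\fp$ plus a single extra element, which in turn can be arranged by working with the explicit generators of $\Kitt$ provided by Theorem \ref{Main}(3).

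The principal obstacle is bridging the gap between $\SD$ and $\SD_1$ without assuming the latter globally. The localization step above is expected to give the missing unit of depth on the Koszul cycles at primes of maximal codimension $s$, but securing this rigorously requires a delicate depth-lemma computation that leverages $\depth(R/I)\geq d-s$ to control $\depth(Z_i(\ff;R)_\fp)$ across the relevant range. If this local upgrade succeeds, Theorem \ref{TaboutK}(3) closes the remaining minimal primes and the conjecture follows; if it fails, a complementary approach via generic deformation over $\widetilde R=R[\underline y]_{\fm R[\underline y]+(\underline y)}$ combined with the specialization theorem \ref{Tkittspecial} for $\Kitt$ must be invoked, in which case the hard part shifts to verifying flat behavior of the colon $(\widetilde\fa:I\widetilde R)$ along the generic family. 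Either route pivots on the same technical core, namely controlling associated primes and depths of $_0\mathcal Z^+_\bullet$ under localization, and this is precisely the point at which the hypothesis $\SD$ alone is at its weakest and the main conceptual difficulty of the conjecture resides.
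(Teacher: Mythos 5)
Your preliminary reductions are sound: under $\SD$ the complex $_0\mathcal Z^+_\bullet$ is acyclic and $R/\Kitt(\fa,I)$ is Cohen--Macaulay of dimension $d-s$, hence unmixed, so $\Ass(J/\Kitt(\fa,I))\subseteq\Ass(R/\Kitt(\fa,I))=\miin(\Kitt(\fa,I))=\miin(J)$, all of height exactly $s$, and the primes $\fp\not\supseteq I$ are handled by $\fa\subseteq\Kitt(\fa,I)$ (Theorem \ref{TkittH}). The gap is that the essential case $\fp\supseteq I$ is never actually closed, and both exits you propose fail. Theorem \ref{TaboutK}(3) requires the residual intersection to be \emph{arithmetic}, i.e.\ $\mu_{R_{\fq}}((I/\fa)_{\fq})\leq 1$ at the relevant primes; this is precisely an extra hypothesis at the primes $\fp\supseteq I+J$ of height $s$, and localizing does not produce it --- if it held at all such primes the residual would already be arithmetic and \cite[Theorem 2.11]{Ha} would apply, which is exactly the situation the conjecture is meant to go beyond. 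Likewise Theorem \ref{TaboutK}(2) needs $\mu(I/\fa)\leq 1$, which ``working with the explicit generators of $\Kitt$'' cannot arrange. The claimed ``extra unit of depth'' gained by localizing at a prime of maximal codimension $s$ is not real: depths of Koszul cycles and homologies can drop under localization; sliding depth localizes, but there is no mechanism upgrading $\SD$ to $\SD_1$ or to $\SDC_1$ at level $\min\{s-g,r-g\}$, and bridging that gap is the entire content of the conjecture, so this step assumes what must be proved. The fallback via generic deformation has the same unresolved core: only $\Kitt$ is known to specialize (Theorem \ref{Tkittspecial}); the colon ideal $\fa:I$ is not, and controlling it along the generic family is again the hard open point, not a technicality.

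For comparison, note that the paper does not prove Conjecture \ref{conj} in full either; it proves Theorem \ref{Tmain} under the stronger hypothesis $\SD_1$ (and $g\geq 2$) by a completely different route: choose a regular sequence $\alpha_1,\dots,\alpha_{g-2}\subseteq\fa$, pass to $R/(\alpha)$ where the image of $I$ has height two and still satisfies $\SD_1$, invoke \cite[Theorem 4.5]{CNT} there, and lift back using the specialization Theorem \ref{Tkittspecial}; the range $s\leq g+1$ is treated separately in Proposition \ref{Ps<g+1}. Your localization-at-minimal-primes reduction is a genuinely different and potentially useful framing, but as written it is a program whose decisive step is missing rather than a proof.
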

%-------------------------------------------------------------------	
	%-------------------------------------------------------------------	

	\subsection{The structure of  disguised residual intersections}
	An obstruction to generalizing the techniques in  \cite[Theorem 4.5]{CNT} for ideals $I$ with $\Ht(I) > 2$ is that the usual reduction modulo a regular sequence does not work smoothly if one does not know the explicit structure of the disguised residual intersection. This is why  in this paper we look for explicit descriptions of the maps in spectral sequences in order to determine the generators of disguised residual intersections.

	 Since the bi-complex ${\mathcal D}_{\bullet} = \Tot(K_{\bullet}(\ag;S)\otimes_S \mathcal Z_{\bullet} (\textbf{f};R) )$ is a subcomplex of the complex 
	${\mathcal F}_{\bullet} =K_{\bullet}(\ag,T_1,\cdots ,T_r;S)\simeq  \Tot(K_{\bullet} (\ag;S)\otimes_S K_{\bullet} (T_1,\dots T_r;S))$ the map $\tau_0$ in (\ref{tau0}) is just the restriction of the comparison map $\tau_0$ constructed in Section \ref{Snewstructure} for the bi-complex $\mathcal F_{\bullet}  \otimes_S \check C^\bullet_\mathfrak t$. By Theorem \ref{Tlifting}, this map is just the connecting map in the Eagon-Northcott complex \begin{equation}\label{MatrixM}
	\varepsilon_0:\bigwedge^r R^{r+s} \rightarrow R,
	\end{equation}   
	defined in Proposition \ref{Pepsilon},  associated to the matrix $M = (c_{ij}| id_{(r\times r)})$.

	 For the rest of this section, we set $e'_1,\dots,e'_s,e_1,\dots,e_r$  for the basis of $K_1(\gamma_1,\cdots,\gamma_s,T_1,\cdots,T_r;S)$ as a free $S$-module; so that
	   $\partial(e'_i) = \gamma_i$	and  $\partial(e_i) = T_i$, in the corresponding Koszul complex. 	We also denote the columns of the matrix $M$ with $C_1,\cdots, C_s,I_1,\cdots, I_r$. 
	
%=======================================================================================================
%\begin{defi}\label{Dsgn}
%		Let $n$ be an integer and  $I = \{i_1,\cdots , i_n\}$ be an ordered set. For any $J\subset I$ with $|J| = j$,  define $$
%		\sgn(J\subset I)$$ to be the sign of the permutation that put the elements of $J$ on the first $j$ positions.
%	\end{defi} 
%	\begin{exa}
%		Consider $I = \{1,2,3,4,5\}$. Then $\sgn(\{2,4\}\subset I)=-1$, because we need two transpositions to get $\{2,4,1,3,5\}$. 
%	\end{exa}
%	\begin{defi}\label{Dmatrix}
%		Consider our $r\times s$ matrix  $\Phi = (c_{ij})$. 
%		\begin{enumerate}
%			\item Let $I\subset\{1,\cdots ,r\},J\subset \{1,\cdots ,s\}$ be two ordered subsets. Define $$\Phi^J_I$$ to be the submatrix with rows indexed by $I$ and columns indexed by $J$. If $I = \{1,\cdots ,r\}$ we suppress the subscript and write $$\Phi^J.$$ We use an analogous notation if $J= \{1,\cdots ,s\}$
%			\item Let $I\subset\{1,\cdots ,r\},J_1,J_2\subset \{1,\cdots ,s\}$ be three ordered subsets. Define $$\Phi^{J_1,J_2}_I$$ to be the submatrix with rows indexed by $I$, the first columns indexed by $J_1$ and the last columns indexed by $J_2$. 
%		\end{enumerate}
%	\end{defi}
%-------------------------------------------------------------------	
	
	\begin{lemma}\label{Peagonres}
		 Keeping the above notations,  let $L_1\subseteq\{1,\cdots ,s\}$and $L_2\subseteq\{1,\cdots ,r\}=L$ such that $|L_1|+|L_2|=r$. Then 
		 $$\varepsilon_0(e'_{L_1}\otimes e_{L_2})=\bigwedge_{i \in L_1} C_i\bigwedge_{j\in L_2}I_j$$
		 where $C_i$ and $I_j$ are the columns of $M$ considered as elements in $\bigwedge ^1 R^r$. 
		 In other words,  $$\varepsilon_0(e'_{L_1}\otimes e_{L_2})=\pm\det\Phi^{L_1}_{L\setminus L_2}.$$
%		 (The notation $\Phi^{L_1}_{L\setminus L_2}$ is already defined in  Definition \ref{Dmatrix}).
where $\Phi^{L_1}_{L\setminus L_2}$ is the sub-matrix of $\Phi = (c_{ij})$ whose  rows indexed by $L\setminus L_2$ and columns indexed by $L_1$.
	\end{lemma}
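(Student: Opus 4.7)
The plan is to unwind $\varepsilon_0$ into an iterated Koszul contraction and then reduce the calculation to a single Laplace expansion. First I would apply the concrete description of the connecting map recorded at the end of Section~\ref{Snewstructure}: set $F = R^{s+r}$ with basis $e'_1,\ldots,e'_s,e_1,\ldots,e_r$ and $G = R^r$ with basis $T_1,\ldots,T_r$ and dual basis $T'_1,\ldots,T'_r$. The linear map with matrix $M$ decomposes as $w \mapsto \phi_1(w)T_1 + \cdots + \phi_r(w)T_r$, where $\phi_i \in F^*$ is the $i$-th row of $M$; explicitly $\phi_i(e'_j) = c_{ij}$ and $\phi_i(e_k) = \delta_{ik}$. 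With $T'_1\wedge\cdots\wedge T'_r$ chosen as the generator of $\bigwedge^r G^*$, the connecting map $\varepsilon_0$ is precisely the composition $\partial_{\phi_1}\cdots\partial_{\phi_r}\colon \bigwedge^r F \to R$.

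Next I would invoke (or prove in two lines by induction, using the derivation property of each $\partial_\phi$) the standard determinantal identity
$$\partial_{\phi_1}\cdots\partial_{\phi_r}(v_1\wedge\cdots\wedge v_r)\;=\;\det\bigl(\phi_i(v_j)\bigr)_{1\leq i,j\leq r},\qquad v_j\in F,$$
equivalent to the usual pairing $\bigwedge^r F^*\otimes\bigwedge^r F \to R$. Applying this to $v_1\wedge\cdots\wedge v_r = e'_{L_1}\wedge e_{L_2}$, taken in the orders induced by the canonical orderings of $L_1$ and $L_2$, the resulting $r\times r$ matrix has its $L_1$-columns equal to the columns $C_i$ of $\Phi$ and its $L_2$-columns equal to the standard basis vectors $I_j \in R^r$. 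Under the identification $\bigwedge^r R^r \simeq R$, this determinant is exactly $\bigwedge_{i\in L_1} C_i \wedge \bigwedge_{j\in L_2} I_j$, which establishes the first formula of the lemma.

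Finally, to pass to the minor form, I would perform a Laplace expansion of that $r\times r$ determinant along the $|L_2|$ identity columns $I_j$, $j\in L_2$. Only one term survives: the one that picks out the rows indexed by $L_2$, leaving the complementary square block whose rows are indexed by $L\setminus L_2$ and columns by $L_1$, namely $\det\Phi^{L_1}_{L\setminus L_2}$, multiplied by a sign depending only on $L_1$ and $L_2$. The only subtlety anywhere in the argument is sign bookkeeping (ordering of wedge factors, the chosen generator of $\bigwedge^r G^*$, and the signs in the Laplace expansion); since the statement allows a global $\pm$, this reduces to fixing a consistent convention rather than a substantive obstacle.
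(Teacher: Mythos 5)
Your proposal is correct and follows essentially the same route as the paper: identify $\varepsilon_0$ with the iterated contraction $\partial_{\phi_1}\cdots\partial_{\phi_r}$ against the rows of $M$, recognize the result on $e'_{L_1}\wedge e_{L_2}$ as the determinant with columns $C_i$ ($i\in L_1$) and $I_j$ ($j\in L_2$), and reduce to the minor $\pm\det\Phi^{L_1}_{L\setminus L_2}$ by expanding along the identity columns (the paper phrases this as a block-triangular determinant rather than a Laplace expansion, but it is the same computation). You merely make explicit the ``elementary properties of the exterior product'' that the paper leaves implicit.
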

	\begin{proof}
	The complex $\mathcal F_\bullet$ is just the generalized Koszul complex of the linear map $\Psi$ presented by the matrix $M$. According to the notations in Definition \ref{Deagon},  the differential of this complex  $\partial_\Psi$  can be expressed as 
	$$\partial_\Psi(w) = \partial_{\psi_1}(w)T_1+\cdots+\partial_{\psi_r}(w)T_r$$
	where $\psi_i$ is determined by the $i$-th row of the matrix $M$.
 Henceforth  $$\varepsilon_0(w)= \partial_{\psi_1}\cdots\partial_{\psi_r}(w).$$
	Therefore the equality $$\varepsilon_0(e'_{L_1}\otimes e_{L_2})=\bigwedge_{i \in L_1} C_i\bigwedge_{j\in L_2}I_j$$ follows from the elementary properties of the exterior product.
	
For the second expression, one has 
$$\bigwedge_{i \in L_1}C_i\bigwedge_{j\in L_2}I_j =\det(\Phi^{{L_1}}| id_{(r\times r)}^{L_2})=\pm\det\begin{bmatrix}
		\Phi^{L_1}_{L\setminus {L_2}} & 0\\ * & id_{|{L_2}|\times|{L_2}|}
		\end{bmatrix} = \pm\det\Phi^{{L_1}}_{L\setminus {L_2}}$$ 
	
	\end{proof}

	%-------------------------------------------------------------------	

	The following theorem explains the generators of the disguised residual intersection. %$K((a_1,\cdots ,a_s),(f_1,\cdots ,f_r),\Phi)$. 
		\begin{thm}\label{Tgensdisguised2}
		Let $R$ be a commutative ring, $I = (f_1,\cdots ,f_r)\subseteq R$, $\mathfrak a = (a_1,\cdots ,a_s)\subseteq I$ ideals and $\Phi =(c_{ij})$ a matrix such that $(\aa)=(\ff)\cdot \Phi$. Consider the  differential graded algebra $ K_\bullet(\ff;R)=R\langle e_1,\cdots,e_r;\partial(e_i)=f_i\rangle$. 
		Let $\zeta_j = \sum_{i=1}^r c_{ij}e_i,1\leq j\leq s$, $\Gamma_\bullet=R\langle\zeta_1,\cdots ,\zeta_s\rangle$ the sub-algebra generated by the $\zeta$'s, and $Z_\bullet=Z_\bullet(\ff;R)$ the sub-algebra of Koszul cycles. Then the disguised residual intersection $$K(\aa,\ff,\Phi)=<\Gamma_\bullet\cdot Z_\bullet>_r.$$
	\end{thm}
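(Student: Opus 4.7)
The plan is to identify $\tau_0 \colon H^r_{\ft}(\mathcal D_r)_0 \to R$ on an explicit set of $R$-module generators of its source and then recognise the resulting submodule of $R$ as $\langle \Gamma_\bullet \cdot Z_\bullet \rangle_r$. From the formula for $\mathcal D_i$ recalled in the text and the classical identification $H^r_{\ft}(S(-r))_0 = R \cdot \tfrac{1}{T_1\cdots T_r}$, the source is generated as an $R$-module by elements
$$m(z,L_1) := z \otimes e'_{L_1} \otimes \tfrac{1}{T_1\cdots T_r},$$
where $z \in Z_k(\ff;R)$ is a Koszul cycle, $L_1 \subseteq \{1,\ldots,s\}$, $|L_1| + k = r$, and $e'_{L_1}$ is the corresponding exterior monomial in the Koszul complex $K_\bullet(\ag;S)$.

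Next, I would invoke the observation made just before Lemma \ref{Peagonres}: the bi-complex $\mathcal D_\bullet$ embeds into $\mathcal F_\bullet = K_\bullet(\ag,T_1,\ldots,T_r;S)$, so $\tau_0$ is the restriction of the Eagon--Northcott connecting map $\varepsilon_0$ attached to the $r\times (r+s)$ matrix $M = (\Phi \mid \mathrm{id}_{r\times r})$, as identified in Theorem \ref{Tlifting}. Writing $z = \sum_{|L|=k}\beta_L\,e_L$ in the Koszul algebra $K_\bullet(\ff;R)$ and applying Lemma \ref{Peagonres} term by term gives
$$\tau_0(m(z,L_1)) = \sum_{|L|=k} \beta_L \cdot \bigl(\pm \det \Phi^{L_1}_{\{1,\ldots,r\}\setminus L}\bigr),$$
with signs depending only on $L$ and $L_1$.

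The final step is to recognise the right-hand side as the scalar component of the DGA product $\zeta_{L_1}\cdot z$ in $K_r(\ff;R) = R$. Multilinearity of the exterior product gives $\zeta_{L_1} = \sum_{|L|=r-k}\det(\Phi^{L_1}_L)\,e_L$. Multiplying by $z = \sum_{|M|=k}\beta_M e_M$ and using that $e_L \wedge e_M$ vanishes unless $M = \{1,\ldots,r\}\setminus L$, in which case it equals $\pm e_1\wedge\cdots\wedge e_r$, shows that the image of $\zeta_{L_1}\cdot z$ in $R$ agrees with $\tau_0(m(z,L_1))$ up to a uniform sign depending only on $r$. Since the $m(z,L_1)$ generate the source of $\tau_0$ and the products $\zeta_{L_1}\cdot z$ of complementary degrees $r-k$ and $k$ generate $\langle\Gamma_\bullet\cdot Z_\bullet\rangle_r$, this identifies $K(\aa,\ff,\Phi) = \mathrm{Im}(\tau_0)$ with $\langle\Gamma_\bullet\cdot Z_\bullet\rangle_r$.

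The principal obstacle is bookkeeping rather than conceptual: one must carefully align the internal grading of $S$, the external grading of $K_\bullet(\ag;S)$, and the local-cohomology shift $S(-r)$, and then verify that the signs arising from Lemma \ref{Peagonres}, from the \v{C}ech--Koszul totalisation in Theorem \ref{Tlifting}, and from the expansion of $\zeta_{L_1}$ in the Koszul DGA all combine into a single consistent sign, so that the ``$\pm$'' collapses and one obtains an actual equality of ideals in $R$ rather than only of their generating sets up to sign.
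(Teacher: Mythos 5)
Your proposal is correct and follows essentially the same route as the paper: identify $\tau_0$ with the Eagon--Northcott connecting map $\varepsilon_0$ of $M=(\Phi\mid \mathrm{id})$ via Theorem \ref{Tlifting}, evaluate it on generators $e'_{L_1}\otimes z$ using Lemma \ref{Peagonres}, and recognise the result as the degree-$r$ component of $\zeta_{L_1}\wedge z$ in $K_r(\ff;R)\simeq R$. The sign bookkeeping you flag is harmless, since the target is an ideal of $R$ and generators are only needed up to sign.
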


\begin{proof}
	By Definition \ref{Ddisguised}, $K(\aa,\ff,\Phi)$ is the image of the map  $$\tau_0:H^r_\mathfrak t(\mathcal D_r)_0 \rightarrow R.$$ 
	
	Since 
	%\begin{multline}
  $$\mathcal D_r = \bigoplus\limits_{k=r-s}^{r} S^{\binom{s}{r-k}}(-r+k)\otimes_S(Z_k(\ff;R)\otimes_RS(-k)) =\bigoplus\limits_{k=r-s}^{r}\bigwedge^{r-k}R^s\otimes_RZ_{k}(\ff,R)\otimes_RS(-r),
  $$
  %\bigoplus\limits_{i=0}^{r} (K_i(\ag,S)\otimes_R K_{r-i}(\ff;R)\otimes_RS(-r+i)))\\
  %=\bigwedge^rR^{r+s}\otimes_RS(-r),
  %\end{multline}
  $$H^r_\mathfrak t(\mathcal D_r )_0=\bigoplus\limits_{k=r-s}^{r}\bigwedge^{r-k}R^s\otimes_R Z_{k}(\ff,R)\otimes_RH^r_\mathfrak t(S)_{-r}.$$
  The map $\tau_0$ above is the restriction of the same named map in Theorem \ref{Tlifting} which is, up to a sign, equal to the connecting map in the  Eagon-Northcott complex,  $\varepsilon_0$.  
	So that it is enough to determine 
	$$\{\varepsilon_0(  e'_{L_1}\otimes z_{j}):r-s\leq j \leq r,  |L_1|= r-j,  \text{~~and~~} z_{j}\in Z_{j}(\ff;R) \}.$$
		More explicitly, for a cycle $z_{j} = \sum_{|L_2| = j}\alpha_{L_2}e_{L_2}$, according to the proof of Lemma	\ref{Peagonres}, 
		$$\varepsilon_0(e'_{L_1}\otimes z_{j})= \sum_{|L_2| = j}\alpha_{L_2}\partial_{\phi_1}\cdots\partial_{\phi_r}(e'_{L_1}\otimes e_{L_2})=\sum_{|L_2| = j}\alpha_{L_2}\bigwedge_{i \in L_1}C_i\bigwedge_{j\in L_2}I_j$$
	
	Now, identifying $C_i$ with $\zeta_i = \sum_{k=1}^r c_{ki}e_k\in \bigwedge^1R^r$ and $I_j$ with $e_j$, we get
	
	$$\varepsilon_0(e'_{L_1}\otimes z_{j})=\sum_{|L_2| = j}\alpha_{L_2}\bigwedge_{i \in L_1}C_i\bigwedge_{j\in L_2}I_j=\sum_{|L_2| = j}\alpha_{L_2}\bigwedge_{i \in L_1}\zeta_i\bigwedge_{j\in L_2}e_j=(\bigwedge_{i \in L_1}\zeta_i)\bigwedge z_j$$
	
	We also notice that $<\Gamma_\bullet\cdot Z_\bullet>_r\subseteq K_r(\ff;R) = \bigwedge^rR^r\simeq R.$ Hence $<\Gamma_\bullet\cdot Z_\bullet>_r$ is isomorphic to an ideal of $R$. 
\end{proof}
\begin{rmk} As it may be understood from the proof of the above theorem, to construct $K(\aa,\ff,\Phi)$ one may only need cycles of order $j\geq r-s$. However this fact is implicit in the equation $K(\aa,\ff,\Phi)=<\Gamma_\bullet\cdot Z_\bullet>_r$. Since $\Gamma_\bullet=R\langle\zeta_1,\cdots ,\zeta_s\rangle$, an element of $\Gamma_\bullet$ has degree at most $s$.  
\end{rmk}	
	
	We postpone the corollaries of this structural theorem until section 5. We first show that the definition of the disguised residual intersection does not depend on any choices of generators of $\mathfrak a$ and $I$ and neither on the choices of the matrix $\Phi$. 
	\subsection{Independence from the generating sets}\label{SSindependent}

	In this subsection let  $R$ be a commutative ring,  $I = (f_1,\cdots ,f_r)$, $\mathfrak a = (a_1,\cdots ,a_s)\subseteq I$ and $(\aa)=(\ff)\Phi$ for some matrix $\Phi=(c_{ij})$. 
	
	%We first deal with the independence from the choice of the matrix $\Phi$.
%===================================================================	
	
	\begin{prop}\label{Pindmatrix} The desguised residual intersection does not depend on the presentation matrix. 
		    
	\end{prop}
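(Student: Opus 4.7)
The plan is to bypass the matrix-level spectral sequence construction entirely and instead work with the algebraic description of $K(\mathbf{a},\mathbf{f},\Phi)$ furnished by Theorem \ref{Tgensdisguised2}, namely
$$K(\mathbf{a},\mathbf{f},\Phi)=\langle \Gamma_\bullet\cdot Z_\bullet\rangle_r,\qquad \Gamma_\bullet=R\langle\zeta_1,\ldots,\zeta_s\rangle,\ \zeta_j=\sum_{i=1}^r c_{ij}e_i.$$
Here the sub-algebra $Z_\bullet=Z_\bullet(\mathbf{f};R)$ of Koszul cycles is intrinsic to $\mathbf{f}$ and does not feel $\Phi$ at all, so the only place where $\Phi$ enters is through the elements $\zeta_1,\ldots,\zeta_s\in K_1(\mathbf{f};R)$.

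Suppose $\Phi'=(c'_{ij})$ is another $r\times s$ matrix with $(\mathbf{a})=(\mathbf{f})\cdot\Phi'$, and set $\zeta'_j=\sum_i c'_{ij}e_i$. The defining relation $(\mathbf{a})=(\mathbf{f})\cdot\Phi=(\mathbf{f})\cdot\Phi'$ forces each column of $\Phi-\Phi'$ to be a syzygy of $\mathbf{f}$, so
$$\sigma_j:=\zeta_j-\zeta'_j=\sum_{i=1}^r(c_{ij}-c'_{ij})e_i\in Z_1(\mathbf{f};R)\subseteq Z_\bullet$$
for every $j=1,\ldots,s$. Hence $\zeta'_j=\zeta_j-\sigma_j$ lies in the sub-algebra of $K_\bullet(\mathbf{f};R)$ generated by $\Gamma_\bullet$ and $Z_\bullet$, and symmetrically $\zeta_j=\zeta'_j+\sigma_j$ lies in the sub-algebra generated by $\Gamma'_\bullet=R\langle\zeta'_1,\ldots,\zeta'_s\rangle$ and $Z_\bullet$.

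From here the computation is routine: expanding a wedge product
$$\zeta'_{j_1}\wedge\cdots\wedge\zeta'_{j_k}=(\zeta_{j_1}-\sigma_{j_1})\wedge\cdots\wedge(\zeta_{j_k}-\sigma_{j_k})$$
as a sum over subsets yields terms of the form $\bigl(\bigwedge_{i\in A}\zeta_{j_i}\bigr)\wedge\bigl(\bigwedge_{i\notin A}\sigma_{j_i}\bigr)$, and since $Z_\bullet$ is a sub-algebra of $K_\bullet(\mathbf{f};R)$, the second factor lies in $Z_\bullet$. Therefore every element of $\Gamma'_\bullet$ belongs to the sub-$R$-module $\Gamma_\bullet\cdot Z_\bullet$ of $K_\bullet(\mathbf{f};R)$, which gives $\langle\Gamma'_\bullet\cdot Z_\bullet\rangle_r\subseteq\langle\Gamma_\bullet\cdot Z_\bullet\rangle_r$. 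The reverse inclusion follows by the same argument with the roles of $\Phi$ and $\Phi'$ swapped, yielding $K(\mathbf{a},\mathbf{f},\Phi)=K(\mathbf{a},\mathbf{f},\Phi')$.

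The only delicate point is ensuring that the transition terms produced by the expansion really remain inside $\langle \Gamma_\bullet\cdot Z_\bullet\rangle_r$ rather than escaping to a larger sub-algebra; this is handled cleanly by the observation that $Z_\bullet$ is itself closed under the DG-algebra product, so wedge products of the 1-cycles $\sigma_j$ contribute only to $Z_\bullet$ and never require new generators. Thus the presentation-independence reduces to the one-line remark that differences of two presentation matrices give Koszul 1-cycles, which is precisely the feature that makes the Koszul-Fitting description of Theorem \ref{Tgensdisguised2} the right invariant to work with.
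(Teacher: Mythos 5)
Your proposal is correct and follows essentially the same route as the paper's proof: both reduce to the description $K(\aa,\ff,\Phi)=\langle\Gamma_\bullet\cdot Z_\bullet\rangle_r$ from Theorem \ref{Tgensdisguised2}, observe that the columns of $\Phi-\Phi'$ are syzygies so that $\zeta_j-\zeta'_j\in Z_1(\ff;R)$, and then expand wedge products to absorb the difference terms into $Z_\bullet$. Your binomial expansion of $\zeta'_{j_1}\wedge\cdots\wedge\zeta'_{j_k}$ is exactly the paper's inclusion $\widetilde{\Gamma}_i\subseteq\Gamma_i+\Gamma_{i-1}\cdot Z_1+\cdots+Z_i$, so there is nothing to add.
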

	\begin{proof}
	Let $\Phi=(c_{ij})$ and $\widetilde{\Phi}=(\widetilde{c_{ij}})$ be two matrices such that $(\aa)=(\ff)\Phi =(\ff)\widetilde\Phi$.  
		As $(\ff)(\Phi-\widetilde{\Phi})=(a_1-a_1,\cdots ,a_s-a_s) = \textbf{0}$, the columns of the matrix $\Phi-\widetilde{\Phi}$ are syzygies of the sequence $\ff$. Hence setting $\zeta_j = \sum_{i=1}^r c_{ij}e_i$, $\widetilde{\zeta_j} = \sum_{i=1}^r\widetilde{c_{ij}}e_i$, $\Gamma_\bullet = R\langle\zeta_1,\cdots ,\zeta_s\rangle$, $\widetilde{\Gamma}_\bullet = R\langle\widetilde{\zeta_1},\cdots ,\widetilde{\zeta_s}\rangle$, and $Z_\bullet$ the algebra of Koszul cycles of the sequence $\ff$, we have, by Theorem \ref{Tgensdisguised2}, \begin{equation}
		K(\aa,\ff,\Phi) = <\Gamma_\bullet\cdot Z_\bullet>_r
		\end{equation}
		and \begin{equation}
		K(\aa,\ff,,\widetilde\Phi) = <\widetilde\Gamma_\bullet\cdot Z_\bullet>_r
		\end{equation}
		Since, for all $j$, $\zeta_j = \widetilde{\zeta_j} + z_j$ for some $z_j \in Z_1$, we have \begin{equation}
		\widetilde{\Gamma}_i \subseteq \Gamma_i + \Gamma_{i-1}\cdot Z_1+\dots+\Gamma_{1}\cdot Z_{i-1} + Z_i.
		\end{equation}
		Hence for $1\leq i \leq s$ \begin{equation}\label{Egammasyz}
		\widetilde{\Gamma}_i\cdot Z_{r-i} \subseteq \Gamma_i\cdot Z_{r-i}+\Gamma_{i-1}\cdot Z_{r-i+1}+\dots+\Gamma_{1}\cdot Z_{r-1} + Z_r
		\end{equation}
		This proves the inclusion $$K(\aa,\ff,\Phi)\supseteq K(\aa,\ff,\widetilde{\Phi})$$
		The opposite inclusion follows similarly.
	\end{proof}
	
%	Next, we deal with the independence from the choice of generators of $\mathfrak a$.
	
%===================================================================	

	\begin{prop}\label{Pindgensa}
		The disguised residual intersection does not depend on the choice of generators of $\mathfrak a$.
	\end{prop}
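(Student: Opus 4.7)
The plan is to reduce the statement to Proposition \ref{Pindmatrix}: a change of generators of $\mathfrak a$ is absorbed into a change of representation matrix, which in turn only replaces the generators $\zeta_1,\dots,\zeta_s$ of the sub-algebra $\Gamma_\bullet$ by another set of elements spanning the same $R$-submodule of $K_1(\ff;R)$. Since the formula of Theorem \ref{Tgensdisguised2} describes $K(\aa,\ff,\Phi)$ in terms of the sub-algebra $\Gamma_\bullet$, the disguised residual intersection is insensitive to such a change.

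Concretely, suppose $\mathfrak a=(a_1,\dots,a_s)=(a'_1,\dots,a'_{s'})$. Since each $a'_j$ lies in $\mathfrak a$ and each $a_k$ lies in $(a'_1,\dots,a'_{s'})$, there exist matrices $U=(u_{kj})\in M_{s\times s'}(R)$ and $V\in M_{s'\times s}(R)$ with $(a'_1,\dots,a'_{s'})=(a_1,\dots,a_s)\,U$ and $(a_1,\dots,a_s)=(a'_1,\dots,a'_{s'})\,V$. If $\Phi$ is a representation matrix for $\aa$ with $\aa=\ff\,\Phi$, then $\Phi U$ is a valid representation matrix for $\aa'$, since $\aa'=\ff\,\Phi U$. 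The $j$-th column of $\Phi U$ produces the element
$$\zeta'_j=\sum_{i=1}^r (\Phi U)_{ij}\,e_i=\sum_{k=1}^s u_{kj}\,\zeta_k \in K_1(\ff;R),$$
so each $\zeta'_j$ is an $R$-linear combination of the $\zeta_k$'s. Consequently, as sub-algebras of $K_\bullet(\ff;R)$,
$$\Gamma'_\bullet:=R\langle\zeta'_1,\dots,\zeta'_{s'}\rangle\ \subseteq\ R\langle\zeta_1,\dots,\zeta_s\rangle=:\Gamma_\bullet.$$
The symmetric argument using $V$ yields the reverse containment, hence $\Gamma_\bullet=\Gamma'_\bullet$.

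The conclusion now follows by combining Theorem \ref{Tgensdisguised2} with Proposition \ref{Pindmatrix}: for any valid representation matrix $\Phi'$ of $\aa'$,
$$K(\aa',\ff,\Phi')=K(\aa',\ff,\Phi U)=\langle\Gamma'_\bullet\cdot Z_\bullet\rangle_r=\langle\Gamma_\bullet\cdot Z_\bullet\rangle_r=K(\aa,\ff,\Phi).$$
There is no real obstacle in the proof; it is bookkeeping, once one recognises that the expression $\langle\Gamma_\bullet\cdot Z_\bullet\rangle_r$ of Theorem \ref{Tgensdisguised2} depends only on the $R$-span of the $\zeta$'s, namely the column span of the representation matrix, which is visibly invariant under the substitutions $\Phi\mapsto\Phi U$ and $\Phi\mapsto \Phi U V$ induced by changing the generators of $\mathfrak a$. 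The only care needed is to invoke Proposition \ref{Pindmatrix} for $\aa'$ at the outset so that we may freely compute $K(\aa',\ff,-)$ using the specific presentation matrix $\Phi U$.
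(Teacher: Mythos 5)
Your overall strategy is the paper's: change of generators of $\mathfrak a$ is absorbed into a change of representation matrix $\Phi\mapsto\Phi U$, and one then appeals to Theorem \ref{Tgensdisguised2} and Proposition \ref{Pindmatrix}. The containment $\Gamma'_\bullet\subseteq\Gamma_\bullet$, hence $K(\aa',\ff,\Phi U)\subseteq K(\aa,\ff,\Phi)$, is correct. But the step ``the symmetric argument using $V$ yields the reverse containment, hence $\Gamma_\bullet=\Gamma'_\bullet$'' contains a genuine gap: that equality of sub-algebras is false in general. What the symmetric argument actually gives is that the $\zeta$'s of $\Phi UV$ lie in the $R$-span of the $\zeta'$'s of $\Phi U$; since $\aa(UV-I)=0$ only forces the columns of $\Phi UV-\Phi$ to be syzygies of $\ff$, one gets $\sum_l(UV)_{lk}\zeta_l=\zeta_k+z_k$ with $z_k\in Z_1(\ff;R)$, and $\zeta_k$ itself need not lie in $\Gamma'_\bullet$. (Concretely: if $a_2=xa_1$ and $\zeta_2$ is chosen as $x\zeta_1$ plus a nonzero syzygy, then $\zeta_2\notin R\langle\zeta_1\rangle$, yet $\aa=(a_1,a_2)=(a_1)$.) Consequently the third equality in your displayed chain, $\langle\Gamma'_\bullet\cdot Z_\bullet\rangle_r=\langle\Gamma_\bullet\cdot Z_\bullet\rangle_r$, is asserted on false grounds.

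The repair is exactly the paper's closing move, which you do not make: work at the level of Kitt ideals rather than of the algebras $\Gamma_\bullet$. From $\Gamma'_\bullet\subseteq\Gamma_\bullet$ and the analogous containment for $\Phi UV$ relative to $\Phi U$ one obtains
$$K(\aa,\ff,\Phi UV)\ \subseteq\ K(\aa',\ff,\Phi U)\ \subseteq\ K(\aa,\ff,\Phi),$$
and Proposition \ref{Pindmatrix} gives $K(\aa,\ff,\Phi UV)=K(\aa,\ff,\Phi)$ because the two matrices present the same $\aa$ (their difference has syzygy columns, and the discrepancy $z_k\in Z_1$ is absorbed since $Z_\bullet$ is closed under multiplication). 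This sandwiches all three ideals into equality. So you need Proposition \ref{Pindmatrix} twice: once for $\aa'$ at the outset, as you note, and once more to close the loop $\Phi\to\Phi U\to\Phi UV$; the latter use is the one that supplies the reverse containment your argument is missing.
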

	\begin{proof}
		Let $(a_1,\cdots ,a_s)=(\aa)$, $(a'_1,\cdots ,a'_{s'})=(\aa')$ be two generating sets of the ideal $\mathfrak a$. There exists an $s\times s'$ matrix $M$, and an $s'\times s$ matrix $M'$ such that \begin{equation}
		(\aa)\cdot M = (\aa')
		\end{equation}
		\begin{equation}
		(\aa')\cdot M' = (\aa)
		\end{equation}
		
		Therefore, choosing $\Phi$ such that $(\aa)=(\ff)\cdot \Phi$, we have
		\begin{equation}
		(\aa')=(\ff)\cdot\Phi\cdot M.
		\end{equation} 
		Let $\zeta_j, 1\leq j \leq s$ be the $\zeta$'s associated to the matrix $\Phi$ and $\zeta_j', 1\leq j\leq s'$ be the $\zeta$'s associated to the matrix $\Phi\cdot M$. By elementary properties of the wedge product, any wedge product of the $\zeta_j'$'s is a linear combination of wedge products of the $\zeta_j$'s with coefficients some minors of the matrix $M$. 
		Hence, by Theorem \ref{Tgensdisguised2}, \begin{equation}
		K(\aa',\\f,\Phi\cdot M)\subseteq K(\aa,\ff,\Phi)
		\end{equation}   
		On the other hand, $\Phi\cdot M\cdot M'$ is a matrix such that $(\aa)=(\ff)\cdot\Phi\cdot M\cdot M'$. By the same argument as above, we have   \begin{equation}
		K(\aa,\ff,\Phi\cdot M\cdot M')\subseteq K(\aa,\ff,\Phi\cdot M')\subseteq K(\aa,\ff,\Phi)
		\end{equation}
		The result now follows from the independence from the choice of the matrix $\Phi$, Proposition \ref{Pindmatrix}. 
	\end{proof}
	
	It now remains to prove that the disguised residual intersection does not depend on a choice of generators of $I$. For that we need two lemmas.
%===================================================================	
	
	\begin{lemma}\label{Lcycles}
		Let $R$ be a commutative ring,  $I = (f_1,\cdots ,f_r)$ an ideal, $1\leq i\leq r+1$, $f_0\in \Ann H_{i-1}(\ff;R)$ and $K_\bullet(f_0,\ff;R)=R\langle e_0,e_1,\cdots,e_r;\partial(e_i)=f_i\rangle$ the Koszul DG-Algebra. Then any cycle $z\in Z_i(f_0,\ff;R)$ can be uniquely written in the form $$z = e_0\wedge w + w'$$ where $w \in Z_{i-1}(\ff;R)$, $w'\in K_i(f_0,\ff;R)$ and $\partial(w') =-f_0w$. Conversely, for any $w \in Z_{i-1}(\ff;R)$ there exists $w' \in K_i(f_0,\ff;R)$ such that $e_0\wedge w + w' \in Z_i(f_0,\ff;R)$.
	\end{lemma}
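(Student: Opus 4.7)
The plan is to exploit the explicit direct-sum decomposition of the Koszul complex obtained by splitting off the generator $e_0$. As a free $R$-module one has
\[
K_i(f_0,\ff;R)=K_i(\ff;R)\;\oplus\;e_0\wedge K_{i-1}(\ff;R),
\]
which gives both existence and uniqueness of a representation $z=e_0\wedge w+w'$ with $w\in K_{i-1}(\ff;R)$ and $w'\in K_i(\ff;R)$ (this is how I read the statement ``$w'\in K_i(f_0,\ff;R)$'': $w'$ is the $e_0$-free summand of $z$).

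Next I would apply the graded Leibniz rule of the Koszul DG-algebra to compute
\[
\partial(z)=\partial(e_0)\wedge w-e_0\wedge\partial(w)+\partial(w')=\bigl(f_0 w+\partial(w')\bigr)\;-\;e_0\wedge\partial(w).
\]
Because the analogous decomposition $K_{i-1}(f_0,\ff;R)=K_{i-1}(\ff;R)\oplus e_0\wedge K_{i-2}(\ff;R)$ is itself a direct sum, the condition $\partial(z)=0$ forces each component to vanish separately: the $e_0$-component yields $\partial(w)=0$, i.e.\ $w\in Z_{i-1}(\ff;R)$, while the $e_0$-free component yields $\partial(w')=-f_0 w$. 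This takes care of the forward direction and its uniqueness.

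For the converse, I fix $w\in Z_{i-1}(\ff;R)$ and look for $w'\in K_i(\ff;R)$ with $\partial(w')=-f_0 w$. Note first that $f_0 w$ is automatically a cycle in $K_{i-1}(\ff;R)$, since $\partial(f_0 w)=f_0\partial(w)=0$. Hence existence of such a $w'$ is equivalent to the vanishing of the class $[f_0 w]=f_0\cdot[w]\in H_{i-1}(\ff;R)$, which is exactly provided by the hypothesis $f_0\in\Ann H_{i-1}(\ff;R)$. Once $w'$ is produced, reversing the Leibniz computation above shows that $z:=e_0\wedge w+w'$ lies in $Z_i(f_0,\ff;R)$.

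No serious obstacle is expected: the only subtle points are to keep track of the Koszul sign $\partial(e_0\wedge w)=f_0 w-e_0\wedge\partial(w)$ and to recognize that the hypothesis on $\Ann H_{i-1}(\ff;R)$ is precisely the cohomological translation of the solvability of $\partial(w')=-f_0 w$. Everything else is formal manipulation within the two-term decomposition of the extended Koszul complex.
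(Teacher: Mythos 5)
Your proposal is correct and follows essentially the same route as the paper's own proof: decompose $K_i(f_0,\ff;R)$ into the $e_0$-free and $e_0$-divisible summands, separate the components of $\partial(z)=0$ via the Leibniz rule, and use $f_0\in\Ann H_{i-1}(\ff;R)$ to make $f_0w$ a boundary for the converse. Your reading of ``$w'\in K_i(f_0,\ff;R)$'' as the $e_0$-free summand is also the intended one, as the paper's converse confirms by producing $w'\in K_i(\ff;R)$.
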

	\begin{proof}
		Every element $z\in K_i(f_0,\ff;R)$ can be uniquely written in the form $z = e_0\wedge w + w'$ where $w\in K_{i-1}(\ff;R)$ and $w'\in K_i(f_0,\ff;R)$. If $z$ is a cycle, then \begin{equation}
		0=\partial(z) = f_0.w - e_0\wedge \partial(w') + \partial(w).
		\end{equation}
		Hence $\partial(w)=0$ and $\partial(w') = -f_0w$.
		
		For the converse, suppose that $w \in Z_{i-1}(\ff;R)$. Since $f_0\in \Ann H_{i-1}(\ff;R)$, $-f_0w$ is a boundary, that is, there is $w'\in K_i(\ff;R)$ with $\partial(w')= - f_0w$. Taking $z = e_0\wedge w+w'\in K_i(f_0,\ff;R)$, we have \begin{equation}
		\partial(z) = \partial(e_0\wedge w + w') = f_0w + e_0\wedge \partial(w) +\partial(w') =0 
		\end{equation}
		which proves the lemma.
	\end{proof} 

%===================================================================	

	\begin{lemma}\label{LgensI}
		Let  $f_0\in\cap_{i=\max\{0,r-s\}}^{r}\Ann H_i(\textbf{f},R)$ and $K_\bullet(f_0,\ff;R)=R\langle e_0, e_1,\cdots,e_r;\partial(e_i)=f_i\rangle$ the Koszul DG-Algebra. Then $$M=\begin{bmatrix}
		\textbf0\\\Phi
		\end{bmatrix}$$ satisfies $(\aa)=(f_0,\ff)\cdot M$, and $$K(\aa,\ff,\Phi) = K(\aa,(f_0,\ff),M).$$
	\end{lemma}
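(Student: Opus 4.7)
By Theorem \ref{Tgensdisguised2}, it suffices to compare
\[
K(\aa,\ff,\Phi) = \langle \Gamma_\bullet \cdot Z_\bullet(\ff;R)\rangle_r, \qquad K(\aa,(f_0,\ff),M) = \langle \widetilde\Gamma_\bullet \cdot Z_\bullet(f_0,\ff;R)\rangle_{r+1},
\]
as ideals of $R$. Because the first row of $M$ is zero, the generators of $\widetilde\Gamma_\bullet$ are literally the same elements $\zeta_j = \sum_{i=1}^r c_{ij}e_i$ as those of $\Gamma_\bullet$, but now regarded inside $K_\bullet(f_0,\ff;R)$; in particular, no $\zeta_j$ involves $e_0$. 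Using the canonical bases, we identify $K_r(\ff;R)$ and $K_{r+1}(f_0,\ff;R)$ with $R$ via $e_1\wedge\cdots\wedge e_r\mapsto 1$ and $e_0\wedge e_1\wedge\cdots\wedge e_r\mapsto 1$, so that left multiplication by $e_0$ realizes the resulting isomorphism between them.

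For the inclusion $K(\aa,\ff,\Phi)\subseteq K(\aa,(f_0,\ff),M)$, I would take a typical generator $\gamma\cdot z$ of $\langle\Gamma_\bullet\cdot Z_\bullet(\ff;R)\rangle_r$ with $\gamma\in\Gamma_{r-j}$ and $z\in Z_j(\ff;R)$. The degree constraint $|\gamma|=r-j\leq s$ forces $j\geq \max\{0,r-s\}$, so the hypothesis on $f_0$ puts us in the range where Lemma \ref{Lcycles} applies. By the converse direction of that lemma, there exists $w'\in K_{j+1}(\ff;R)$ such that $\widetilde z := e_0\wedge z + w'\in Z_{j+1}(f_0,\ff;R)$. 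Then $\gamma\cdot\widetilde z\in\widetilde\Gamma_\bullet\cdot Z_\bullet(f_0,\ff;R)$ has degree $r+1$; since $\gamma$ contains no $e_0$ factor and $\gamma\cdot w'\in K_{r+1}(\ff;R)=0$, one obtains $\gamma\cdot\widetilde z = \pm\, e_0\wedge(\gamma\cdot z)$, which corresponds to $\pm\,\gamma\cdot z$ under the identifications above.

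For the reverse inclusion, take a generator $\widetilde\gamma\cdot\widetilde z$ of $\langle\widetilde\Gamma_\bullet\cdot Z_\bullet(f_0,\ff;R)\rangle_{r+1}$ with $\widetilde\gamma\in\widetilde\Gamma_{r+1-j}$ and $\widetilde z\in Z_j(f_0,\ff;R)$. Apply the direct direction of Lemma \ref{Lcycles} to write $\widetilde z = e_0\wedge w + w'$ uniquely with $w\in Z_{j-1}(\ff;R)$ and $w'\in K_j(\ff;R)$ (the edge cases $j\in\{0,r+1\}$ force $\widetilde\gamma=0$ for degree reasons, since otherwise $\widetilde\gamma$ would have degree exceeding $s$ or would sit in the vanishing $K_{r+1}(\ff;R)$). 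Again $\widetilde\gamma\cdot w'\in K_{r+1}(\ff;R)=0$, so $\widetilde\gamma\cdot\widetilde z = \pm\, e_0\wedge(\widetilde\gamma\cdot w)$, where $\widetilde\gamma\cdot w\in\Gamma_{r+1-j}\cdot Z_{j-1}(\ff;R)\subseteq\langle\Gamma_\bullet\cdot Z_\bullet\rangle_r$.

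The main bookkeeping will be confirming that the indices occurring in Lemma \ref{Lcycles} fall within $[\max\{0,r-s\},r]$ where $f_0$ annihilates $H_i(\ff;R)$; this is automatic from the degree bound $|\gamma|\leq s$ built into $\Gamma_\bullet$ and $\widetilde\Gamma_\bullet$, together with the vanishing of the top degree $K_{r+1}(\ff;R)=0$, which kills every potential obstruction term at once.
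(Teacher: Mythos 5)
Your proposal is correct and follows essentially the same route as the paper's proof: both directions rest on the two halves of Lemma \ref{Lcycles} (decomposing a cycle of $K_\bullet(f_0,\ff;R)$ as $e_0\wedge w+w'$, and lifting a cycle of $K_\bullet(\ff;R)$ to one of $K_\bullet(f_0,\ff;R)$), combined with the observation that any wedge of $r+1$ factors involving only $e_1,\dots,e_r$ vanishes, which kills the $w'$ terms. Your explicit check that the degree bound $|\gamma|\le s$ keeps the homological indices inside the range where $f_0$ annihilates $H_i(\ff;R)$ is a welcome bit of bookkeeping that the paper leaves implicit.
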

	\begin{proof}
		The assertion about the matrix is obvious. Let \begin{equation}
		\zeta_j = 0.e_0 +\sum\limits_{i=1}^rc_{ij}e_i,
		\end{equation} the $\zeta$'s corresponding to the representation matrix $M$. These elements can be viewed as the $\zeta$'s corresponding to the matrix $\Phi$. By Theorem \ref{Tgensdisguised2}, we have $K(\aa,(f_0,\ff), M) = <\Gamma_\bullet\cdot Z_\bullet>_{r+1} $.
		Hence, to construct a generator of $K(\aa,(f_0,\ff), M)$, we take $z \in Z_j(f_0,\ff;R)$, $r+1-s\leq j \leq r+1$ and $L_1\subseteq \{1,\cdots ,s\}$ with $|L_1| = r+1-j$. By Lemma \ref{Lcycles} $z = e_0\wedge w + w'$, where $w\in Z_{j-1}(\ff;R),w'\in K_j(\ff;R)$. Therefore \begin{equation}
		\zeta_{L_1} \wedge z = \zeta_{L_1}\wedge e_0 \wedge w + \zeta_{L_1} \wedge w'
		\end{equation}
		Since $\zeta_{L_1} \wedge w'$ is the wedge product of $r+1$ elements containing only $e_1,\cdots,e_r$, \begin{equation}
		\zeta_{L_1} \wedge w'=0.
		\end{equation}
		The product $\zeta_{L_1} \wedge w$ is the product of a cycle of degree $j-1$ with $(r+1-j)$  $\zeta$'s. Hence it gives an element in $K(\aa,\ff,\Phi)$. Therefore
		\begin{equation}
		K(\aa,(f_0,\ff),M)\subseteq K(\aa,\ff,\Phi).
		\end{equation} For the converse, let $w \in Z_j(\ff;R)$. By Lemma \ref{Lcycles}, there exists $w'\in K_{j+1}(\ff;R)$ such that \begin{equation}
		e_0 \wedge w +w' \in Z_{j+1}(\\a,(f_0,\ff);R).
		\end{equation} Let $L_1\subseteq\{1,\cdots ,s\}$ with $|L_1|=r-j$. We have that $e_0\wedge\zeta_{L_1} \wedge w=\zeta_{L_1}\wedge(e_0\wedge w + w')$. This shows that $\zeta_{L_1} \wedge w\in K(\aa,(f_0,\ff),M)$  
	\end{proof}

	We are now ready to prove the last part of the independence.
%===================================================================	
	
	\begin{prop}\label{Pindgensi}
		The disguised residual intersection does not depend on the choice of generators of $I$.
	\end{prop}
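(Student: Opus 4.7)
The plan is to pass through the union $(\ff,\ff')$ of the two generating sets, combining Lemma \ref{LgensI} with permutation invariance and Proposition \ref{Pindmatrix}. To get started, I would first verify that $K(\aa,\ff,\Phi)$ is invariant under permutations of the entries of $\ff$ together with the corresponding row-permutations of $\Phi$: by Theorem \ref{Tgensdisguised2} one has $K(\aa,\ff,\Phi)=\langle\Gamma_\bullet\cdot Z_\bullet\rangle_r$, and a permutation $\sigma$ of $\{1,\ldots,r\}$ induces a DG-algebra automorphism of $K_\bullet(\ff;R)$ which preserves the subalgebras $\Gamma_\bullet$ and $Z_\bullet$ and acts on $K_r\simeq R$ by $\sgn(\sigma)$; the resulting ideal is unchanged.

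Next, let $\ff=(f_1,\ldots,f_r)$ and $\ff'=(f'_1,\ldots,f'_{r'})$ be two generating sets of $I$ with presentation matrices $\Phi$ and $\Phi'$. Since $I$ annihilates every Koszul homology of $\ff$ (the standard fact $f_j z=\partial(e_j\wedge z)$ for any cycle $z$), each $f'_i\in I$ fulfills the hypothesis of Lemma \ref{LgensI}. Prepending $f'_{r'},\ldots,f'_1$ one at a time --- the hypothesis persists at every step because the augmented sequence still generates $I$ --- yields
\[
K(\aa,\ff,\Phi)\;=\;K\bigl(\aa,(\ff',\ff),\widetilde M\bigr),\qquad \widetilde M=\begin{bmatrix}\mathbf 0\\ \Phi\end{bmatrix}.
\]
By the permutation-invariance established above, this equals $K\bigl(\aa,(\ff,\ff'),M\bigr)$ where $M=\begin{bmatrix}\Phi\\ \mathbf 0\end{bmatrix}$.

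Applying the same procedure symmetrically to $\ff'$ gives $K(\aa,\ff',\Phi')=K\bigl(\aa,(\ff,\ff'),M'\bigr)$ with $M'=\begin{bmatrix}\mathbf 0\\ \Phi'\end{bmatrix}$. Since $M$ and $M'$ are both presentation matrices of $\aa$ in terms of the combined sequence $(\ff,\ff')$, Proposition \ref{Pindmatrix} identifies the two right-hand sides, and the proof is complete. The only delicate point is the iterative use of Lemma \ref{LgensI}, which requires that the annihilator hypothesis survive each augmentation; this is immediate from the fact that $I$ annihilates the Koszul homology of any of its generating sets, so the step is really just bookkeeping once the permutation invariance is in place.
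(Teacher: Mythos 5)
Your proof is correct and follows essentially the same route as the paper: pass to the combined generating set $(\ff,\ff')$ via repeated applications of Lemma \ref{LgensI}, then invoke Proposition \ref{Pindmatrix} to reconcile the two presentation matrices of $\aa$. Your explicit permutation-invariance step is a welcome addition, since the paper's final application of Lemma \ref{LgensI} implicitly removes \emph{trailing} rather than leading elements, which strictly speaking requires exactly the reordering argument you supply.
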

	\begin{proof}
		Let $(f_1,\cdots ,f_r)=(\ff),(f_1',\cdots ,f_t')=(\ff')$ be two sets of generators of $I$, $(a_1,\cdots ,a_s)=(\aa)$ a generating set for $\mathfrak a$, $\Phi$ and $\Phi'$ matrices such that $(\aa)=(\ff)\cdot\Phi$ and $(\aa')=(\ff')\cdot\Phi'$.
		By using repeatedly Lemma \ref{LgensI}, we have that \begin{equation}
		M=\begin{bmatrix}
		\textbf{0}\\\Phi
		\end{bmatrix}
		\end{equation} Satisfies $(\aa)=(\ff',\ff)\cdot M$, and $K(\aa,\ff,\Phi)=K(\aa,(\ff',\ff),M)$. Now, by Proposition \ref{Pindmatrix}, $K(\aa,(\ff',\ff),M)=K(\aa,(\ff',\ff),M')$ where \begin{equation}
		M'=\begin{bmatrix}
		\Phi'\\\textbf{0}
		\end{bmatrix}.
		\end{equation} Again, repeated applications of Lemma \ref{LgensI} gives us $K(\aa,(\ff',\ff),M')=K(\aa,\ff',\Phi')$.
	\end{proof}
	
	Now that we know the disguised residual intersection does not depend on any choice of generators or matrix $\Phi$, we introduce the following notation.
	
	\begin{defi}
		Let $R$ be a commutative  ring and $\mathfrak a \subseteq I$ be two finitely generated  ideals. We denote  the disguised residual intersection, $K(\aa,\ff,\Phi)$,  defined in Definition \ref{Ddisguised} by   $\Kitt(\mathfrak a,I)$.  %using any choice of generators $\aa$ and $\ff$ for $\mathfrak a$ and $I$ and any matrix $\Phi$ such that $(\aa)=(\ff)\cdot \Phi$

	\end{defi}

	This notation reminds that the disguised residual intersections are Koszul-Fitting ideals, based on Theorem \ref{Tgensdisguised2}.
	
	Lemmas \ref{Lcycles} and \ref{LgensI} provides some unexpected results about the codimension of the colon ideals and at the same time on the structure of the common annihilators of Koszul homologies. Both of these topics were mentioned as desirable in the works \cite{CHKV} and \cite{U1}.
%===============================================================	
	\begin{cor}\label{CannH}
		Let $R$ be a commutative ring, $I=(f_1,...,f_r)$ an ideal and $\mathfrak a=(a_1,...,a_s)\subseteq I$. Then $\Kitt(\mathfrak a,I) = \Kitt(\mathfrak a,I')$ for any ideal $I'$ satisfying 
		$$I \subseteq I'\subseteq \bigcap\limits_{\max\{0,r-s\}}^{r}\Ann H_i(\ff;R).$$ 
		 In particular, if $\Ht(I)=\Ht(I')$ (for instance when $R$ is Cohen-Macaulay), then  $(\mathfrak a:I)$ being  an $s$-residual intersection, implies that  $(\mathfrak a:I')$ is an $s$-residual intersection.  
	\end{cor}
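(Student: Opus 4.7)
The plan is to write $I' = I + (g_1, \ldots, g_m)$ with each $g_j \in I' \subseteq \bigcap_{i=\max(0,r-s)}^{r} \Ann H_i(\ff; R)$ and to iterate Lemma \ref{LgensI}, enlarging the generating set by one element at a time. I proceed by induction on $m$: the case $m = 0$ is trivial, and in the inductive step set $I_k = I + (g_1, \ldots, g_k)$, with generating set $\ff_k = (\ff, g_1, \ldots, g_k)$. By Proposition \ref{Pindgensi} I am free to compute $\Kitt(\mathfrak a, I_{k-1})$ using the generators $\ff_{k-1}$, and then applying Lemma \ref{LgensI} with $f_0 = g_k$ delivers the step
$$\Kitt(\mathfrak a, I_{k-1}) = \Kitt(\mathfrak a, (g_k, \ff_{k-1}), M_k) = \Kitt(\mathfrak a, I_k).$$

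The central technical step is to verify, at each stage, the hypothesis of Lemma \ref{LgensI}: namely that $g_k$ lies in $\bigcap_{i=\max(0,(r+k-1)-s)}^{r+k-1} \Ann H_i(\ff_{k-1}; R)$. Since each previously added $g_j$ acts as zero on $H_\bullet(\ff_{j-1}; R)$ in the relevant range, the long exact sequence associated to adjoining a Koszul variable collapses to a short exact sequence
$$0 \longrightarrow H_i(\ff_{j-1}; R) \longrightarrow H_i(\ff_j; R) \longrightarrow H_{i-1}(\ff_{j-1}; R) \longrightarrow 0,$$
and the annihilator of the middle term is controlled by the annihilators of the two ends. Inductively, the original hypothesis $g_k \in \bigcap_i \Ann H_i(\ff; R)$ propagates through these extensions. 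The nontrivial point is to extract annihilation of the middle term from that of the endpoints (rather than merely annihilation by a power); this is the main obstacle, and it is handled by working with explicit cycle representatives in the Koszul DG-algebra of $\ff_{j-1}$ together with the decomposition of cycles in $K_\bullet(g_j, \ff_{j-1}; R)$ provided by Lemma \ref{Lcycles}, which at each stage supplies a specific null-homotopy for multiplication by $g_k$.

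Concatenating the inductive steps gives $\Kitt(\mathfrak a, I) = \Kitt(\mathfrak a, I_m) = \Kitt(\mathfrak a, I')$, which is the first assertion.

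For the ``in particular'' clause, assume $\Ht(I) = \Ht(I')$. Theorem \ref{TaboutK}(1) states $\Kitt(\mathfrak a, -) \subseteq (\fa :_R -)$ with equal radicals; combining this with the equality just established yields
$$\sqrt{\fa :_R I} = \sqrt{\Kitt(\mathfrak a, I)} = \sqrt{\Kitt(\mathfrak a, I')} = \sqrt{\fa :_R I'},$$
so these two colon ideals have the same minimal primes and hence the same height. Since $\fa \subseteq I \subseteq I'$ forces $(\fa :_R I') \subseteq (\fa :_R I)$, properness is preserved; together with $\Ht(I) = \Ht(I')$ (automatic in the Cohen-Macaulay case, where all minimal primes of $\cap_i \Ann H_i(\ff; R)$ share the height of $I$), the hypothesis that $(\fa :_R I)$ is an $s$-residual intersection of $I$ then transfers to give that $(\fa :_R I')$ is an $s$-residual intersection of $I'$.
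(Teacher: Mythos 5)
Your proposal follows the same route as the paper's own proof: adjoin the generators of $I'$ to $\ff$ one at a time by iterating Lemma \ref{LgensI} exactly as in Proposition \ref{Pindgensi}, and deduce the second assertion from the equality of radicals in Theorem \ref{TaboutK}(1) (the paper additionally invokes a remark of Huneke here, but your height argument is the same in substance). You are also right to single out, as ``the central technical step,'' the verification that $g_k$ lies in $\bigcap\Ann H_i(\ff_{k-1};R)$ and not merely in $\bigcap\Ann H_i(\ff;R)$; the paper's one-sentence proof passes over this point in silence.

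The difficulty is that your treatment of that step is an assertion, not a proof, and it is precisely where the content lies. As you observe, the short exact sequences $0\to H_i(\ff_{j-1};R)\to H_i(\ff_j;R)\to H_{i-1}(\ff_{j-1};R)\to 0$ only give $g_k^2\in\Ann H_i(\ff_j;R)$. If one carries out the explicit cycle computation you gesture at, writing $z=e_0\wedge w+w'$ with $w\in Z_{i-1}(\ff_{j-1};R)$ and $\partial(w')=-g_jw$ as in Lemma \ref{Lcycles}, and choosing $u$ with $\partial(u)=g_kw$ (possible since $g_k$ kills $H_{i-1}(\ff_{j-1};R)$), one finds that $g_kz$ is, modulo boundaries of $K_\bullet(\ff_j;R)$, the cycle $g_ju+g_kw'\in Z_i(\ff_{j-1};R)$. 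Its class in $H_i(\ff_{j-1};R)$ is a well-defined Massey-product-type obstruction $\langle g_j,[w],g_k\rangle$, and its vanishing does not follow from $g_j,g_k$ annihilating the homology modules; Lemma \ref{Lcycles} supplies no null-homotopy for multiplication by $g_k$. (The explicit homotopy $w\mapsto\alpha\wedge w+\cdots$ exists only when the adjoined element lies in the ideal generated by the current sequence, which is what makes Proposition \ref{Pindgensi} unproblematic and is exactly what fails for the $g_k\in I'\setminus I$.) So the one point at which your write-up goes beyond the paper is left open; to close it you would need either to prove the vanishing of this obstruction class or to find a comparison of $K(\aa,\ff,\Phi)$ with $K(\aa,(g_1,\dots,g_m,\ff),M)$ that avoids passing through the intermediate sequences --- note that the inclusion $K(\aa,(g_1,\dots,g_m,\ff),M)\subseteq K(\aa,\ff,\Phi)$ is free (cycles always decompose), and it is only the reverse inclusion, i.e.\ the lifting of cycles, that carries the obstruction.
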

\begin{proof}
	Let $(f_1',...,f_t')$ be a generating set for $I'$. The proof of Proposition \ref{Pindgensi} is applicable: it relies on of Lemma \ref{LgensI}, which works for elements $f_0 \in \cap_{\max\{0,r-s\}}^{r}\Ann H_i(\textbf{f},R)$. Therefore $\Kitt(\mathfrak a,I) = \Kitt(\mathfrak a,I')$.
	The second part of the statement follows from Theorem \ref{TaboutK}, stating that  these ideals have the same radical and \cite[Remark 1.5]{Hu1}.  
\end{proof}
%===============================================================	

\begin{rmk}
Although, in the above propositions, we have shown  that the structure of the disguised residual intersection $K=H_0(_0\mathcal Z^+_\bullet)$ is independent of the choice of generators, the other homologies of $_0\mathcal Z^+_\bullet$  are not independent of the choice of generators in general; see \cite[Theorem 4.4]{HN}
\end{rmk}
\subsection{Properties of Kitt ideals}
In this section we exhibit some basic properties of the $\Kitt$ ideals (Disguised residual intersections), based on structure Theorem \ref{Tgensdisguised2}. 
%===============================================================	

	\begin{prop}\label{Pzz1}
		Let $R$ be a commutative ring and keep  the same notation as in Theorem \ref{Tgensdisguised2}, we have  $$\langle\Gamma_\bullet \cdot \langle Z_1(\ff;R)\rangle\rangle_r={\rm Fitt}_0(I/\fa).$$ 
In particular, if the algebra of Koszul cycles of the Koszul complex $K_\bullet(\ff;R)$ is generated by cycles of degree one, then $\Kitt(\mathfrak a,I) = {\rm Fitt}_0(I/\mathfrak a).$
	\end{prop}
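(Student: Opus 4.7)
The plan is to identify both ideals as being generated by the same set of $r\times r$ minors, coming from a common presentation matrix of the $R$-module $I/\fa$.

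First, I would write down an explicit presentation
\[
R^s\oplus Z_1(\ff;R)\xrightarrow{\Psi} R^r\longrightarrow I/\fa\longrightarrow 0,
\]
where $R^r\to I/\fa$ sends the standard basis $e_i$ to $\bar f_i$, and the columns of $\Psi$ are the columns $C_j=(c_{1j},\ldots,c_{rj})^T$ of $\Phi$ together with generators of $Z_1(\ff;R)$ viewed as column vectors. The check is routine: a tuple $(\alpha_1,\ldots,\alpha_r)$ lies in the kernel iff $\sum\alpha_if_i\in\fa$, and writing $\sum\alpha_if_i=\sum_j\beta_ja_j=\sum_i(\sum_j c_{ij}\beta_j)f_i$ exhibits such a tuple as the sum of a combination of the $C_j$'s and an element of $Z_1(\ff;R)$.

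By definition $\Fitt_0(I/\fa)$ is the ideal of $r\times r$ minors of $\Psi$. Every such minor is obtained by selecting some $i$ columns among $C_1,\ldots,C_s$ and $r-i$ columns among the syzygies, and taking the determinant of the resulting $r\times r$ matrix.

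Next, I would compute elements of $\langle\Gamma_\bullet\cdot\langle Z_1(\ff;R)\rangle\rangle_r$. A typical generator has the form
\[
\zeta_{j_1}\wedge\cdots\wedge\zeta_{j_i}\wedge z_1\wedge\cdots\wedge z_{r-i}\in K_r(\ff;R)\cong R,
\]
with each $z_k=\sum_\ell\alpha_{k,\ell}e_\ell\in Z_1(\ff;R)$. Under the identification $K_r\cong R$ via $e_1\wedge\cdots\wedge e_r$, and exactly as in the proof of Lemma \ref{Peagonres}, this wedge product equals, up to sign, the determinant of the $r\times r$ matrix whose first $i$ columns are $C_{j_1},\ldots,C_{j_i}$ and whose remaining $r-i$ columns are $z_1,\ldots,z_{r-i}$. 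Hence every such generator is an $r\times r$ minor of $\Psi$, and conversely every $r\times r$ minor of $\Psi$ arises this way. This proves the equality $\langle\Gamma_\bullet\cdot\langle Z_1(\ff;R)\rangle\rangle_r=\Fitt_0(I/\fa)$.

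For the final sentence: by Theorem \ref{Tgensdisguised2}, $\Kitt(\fa,I)=\langle\Gamma_\bullet\cdot Z_\bullet\rangle_r$. If $Z_\bullet$ is generated as an $R$-algebra by its degree-one part $Z_1(\ff;R)$, then the subalgebra $\langle\Gamma_\bullet,Z_\bullet\rangle$ coincides with $\langle\Gamma_\bullet,Z_1(\ff;R)\rangle$, so the first part gives $\Kitt(\fa,I)=\Fitt_0(I/\fa)$. The only mildly delicate step is verifying the presentation of $I/\fa$ and the wedge-to-determinant translation; both are essentially bookkeeping already done in Lemma \ref{Peagonres}, so no genuine obstacle arises.
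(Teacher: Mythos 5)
Your proposal is correct and follows essentially the same route as the paper: both identify the generators of $\langle\Gamma_\bullet \cdot \langle Z_1(\ff;R)\rangle\rangle_r$ with the $r\times r$ minors of the concatenated matrix $(\Phi\,|\,\Psi)$, where $\Psi$ is a first-syzygy matrix of $\ff$, and then recognize $(\Phi\,|\,\Psi)$ as a presentation matrix of $I/\fa$. The only cosmetic difference is that you verify the presentation of $I/\fa$ directly, whereas the paper obtains it from a mapping cone; the wedge-to-minor translation and the deduction of the final claim from Theorem \ref{Tgensdisguised2} are the same.
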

\begin{proof} Let $\Phi$ be an $r\times s$ matrix for which $(\aa)=(\ff)\cdot\Phi$ and $\Psi=(b_{ij})$ be a  syzygy matrix for the sequence $(\ff)$ which has $r$ rows. Then $Z_1 = Z_1(\ff;R)$ is generated by the elements $z_i=\sum_{i=1}^rb_{ij}e_i$. Therefore, $\langle\Gamma_\bullet \cdot \langle Z_1(\ff;R)\rangle\rangle_r$ is obtained by taking all the products of the form 
\begin{equation}\label{Egenz1}
	\zeta_{L_1}\wedge z_{L_2}, |L_1|+|L_2| = r.
	\end{equation}
	By elementary properties of the wedge product, a product as in (\ref{Egenz1}) is an $r\times r$ minor of the matrix $(\Phi|\Psi)$.  This matrix is the representation matrix of $I/\fa$ as it is obtained by taking the mapping-cone of the  following diagram
\begin{equation}
 \xymatrix{
 		R^t\ar[r]^{\Psi}& R^r\ar[r]& R \ar[r]& R/I\ar[r]& 0\\
		& R^s\ar[r]\ar[u]^{\Phi}& R \ar[r]\ar[u]^{=}& R/\fa\ar[r]\ar[u]& 0.
	}
	\end{equation}
	
		Thus $\langle\Gamma_\bullet \cdot \langle Z_1(\ff;R)\rangle\rangle_r={\rm I}_r(\Phi|\Psi)=\Fitt_0(I/\fa)$.
		
		\end{proof}
		
	In the next theorem,  we prove that the structure of $\Kitt(\mathfrak a,I)$ is encrypted on the Koszul homology algebra of the ideal $I$.  The main part of the proof is the following lemma. We fix some notations
	\begin{nota}\label{Dsgn}
		Let $n$ be an integer and  $I = \{i_1,\cdots , i_n\}$ be an ordered set. For any $J\subset I$ with $|J| = j$,  define $$
		\sgn(J\subset I)$$ to be the sign of the permutation that put the elements of $J$ on the first $j$ positions.
	\end{nota} 
%	\begin{exa}
%		Consider $I = \{1,2,3,4,5\}$. Then $\sgn(\{2,4\}\subset I)=-1$, because we need two transpositions to get $\{2,4,1,3,5\}$. 
%	\end{exa}
	\begin{nota}\label{Dmatrix}
		Let $\Phi = (c_{ij})$ be a $r\times s$ matrix. 
		\begin{enumerate}
			\item Let $I\subset\{1,\cdots ,r\},J\subset \{1,\cdots ,s\}$ be two ordered subsets. Define $$\Phi^J_I$$ to be the submatrix with rows indexed by $I$ and columns indexed by $J$. If $I = \{1,\cdots ,r\}$ we supress the subscript and write $$\Phi^J.$$ We use an analogous notation if $J= \{1,\cdots ,s\}$
			\item Let $I\subset\{1,\cdots ,r\},J_1,J_2\subset \{1,\cdots ,s\}$ be three ordered subsets. Define $$\Phi^{J_1,J_2}_I$$ to be the submatrix with rows indexed by $I$, the first columns indexed by $J_1$ and the last columns indexed by $J_2$. 
		\end{enumerate}
	\end{nota}
%===============================================================	
	
	\begin{lemma}\label{LHtilde}
		Let $R$ be a commutative ring and keep the same notation as in Theorem \ref{Tgensdisguised2}, let $B_\bullet(\ff;R)$ be the ideal of Koszul boundaries. Then $$\langle\Gamma_\bullet \cdot B_\bullet\rangle_r = \mathfrak a.$$
	\end{lemma}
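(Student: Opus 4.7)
The plan is to exploit two simple ingredients: (i) the Koszul complex $K_\bullet(\ff;R)$ lives only in degrees $0,\ldots,r$, so $K_{r+1}=0$, and (ii) by construction $\partial(\zeta_j)=\sum_i c_{ij}f_i=a_j$, which directly ties the generators of $\Gamma_\bullet$ to those of $\fa$. Throughout, I identify $K_r\simeq R$ via $e_1\wedge\cdots\wedge e_r\mapsto 1$.

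For the inclusion $\fa\subseteq \langle\Gamma_\bullet\cdot B_\bullet\rangle_r$, fix $j$ and apply the Leibniz rule to the vanishing wedge $\zeta_j\wedge e_1\wedge\cdots\wedge e_r\in K_{r+1}=0$:
\[
0=\partial(\zeta_j\wedge e_1\wedge\cdots\wedge e_r)= a_j\,(e_1\wedge\cdots\wedge e_r)-\zeta_j\wedge\partial(e_1\wedge\cdots\wedge e_r).
\]
Hence $a_j=\zeta_j\wedge\partial(e_1\wedge\cdots\wedge e_r)\in\langle\Gamma_\bullet\cdot B_\bullet\rangle_r$, since $\partial(e_1\wedge\cdots\wedge e_r)\in B_{r-1}$.

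For the reverse inclusion, note that $\langle\Gamma_\bullet\cdot B_\bullet\rangle_r$ is $R$-spanned by elements of the form $\zeta_{L_1}\wedge\partial(e_{L_2})$ with $|L_1|+|L_2|=r+1$; the case $|L_1|=0$ is vacuous since then $|L_2|>r$, so $|L_1|\geq 1$. Once more, $\zeta_{L_1}\wedge e_{L_2}\in K_{r+1}=0$, and the Leibniz rule gives
\[
0=\partial(\zeta_{L_1})\wedge e_{L_2}+(-1)^{|L_1|}\zeta_{L_1}\wedge\partial(e_{L_2}),
\]
so $\zeta_{L_1}\wedge\partial(e_{L_2})=(-1)^{|L_1|+1}\partial(\zeta_{L_1})\wedge e_{L_2}$. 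A further Leibniz expansion inside $\Gamma_\bullet$ writes $\partial(\zeta_{L_1})$ as an $R$-linear combination of wedges of $\zeta$'s whose coefficients are the $\pm a_{j_i}$ with $j_i\in L_1$. Hence the right-hand side, interpreted in $K_r\simeq R$, already lies in $\fa$.

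The argument is thus almost entirely conceptual; the only real labor is sign bookkeeping in the two applications of the Leibniz rule, and both the forward and reverse inclusions are driven by the same collapse $K_{r+1}=0$ together with the defining identity $\partial(\zeta_j)=a_j$.
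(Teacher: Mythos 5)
Your proof is correct, and for the inclusion $\langle\Gamma_\bullet\cdot B_\bullet\rangle_r\subseteq\fa$ it takes a genuinely different and slicker route than the paper. The paper's proof of that inclusion is computational: it expands $\partial(e_{L_2})$, invokes Lemma \ref{Peagonres} to identify each $\zeta_{L_1}\wedge e_{L_2\setminus\{j\}}$ with a signed minor of $\Phi$, and then performs a careful row expansion of those determinants (with nontrivial sign bookkeeping) to arrive at the identity
\begin{equation*}
\zeta_{L_1}\wedge\partial(e_{L_2})=\sum_{i\in L_1}\sgn(\{i\}\subset L_1)\det\Phi^{L_1\setminus\{i\}}_{L\setminus L_2}\,a_i .
\end{equation*}
You instead use only the DG-algebra structure: the collapse $\zeta_{L_1}\wedge e_{L_2}\in K_{r+1}=0$ together with the Leibniz rule transfers the differential from $e_{L_2}$ onto $\zeta_{L_1}$, and since $\partial(\zeta_j)=a_j$ the element $\partial(\zeta_{L_1})\wedge e_{L_2}$ visibly has coefficients in $\fa$; the degenerate case $|L_1|=0$ is correctly disposed of. This avoids the determinant identities entirely and makes the mechanism transparent, at the cost of not producing the explicit minor coefficients, which the paper's version records and which are of some independent interest (they exhibit the $\fa$-coefficients as minors of $\Phi$). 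For the reverse inclusion $\fa\subseteq\langle\Gamma_\bullet\cdot B_\bullet\rangle_r$ the two arguments are essentially the same identity $a_j=\zeta_j\wedge\partial(e_1\wedge\cdots\wedge e_r)$, which the paper verifies by direct computation and you derive, consistently with the rest of your argument, from the Leibniz rule applied to $\zeta_j\wedge e_1\wedge\cdots\wedge e_r=0$.
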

	
	\begin{proof}
		In the Koszul complex $K_\bullet(\ff;R)$, the module of boundaries of degree $k$ is generated by elements of the form $\partial(e_{L_2})$ where $|L_2| = k+1$.  For any $L_1\subseteq \{1,\cdots,s\}$ with $|L_1| = r-k$, we have \begin{equation}\label{Eboundary1}
		\zeta_{L_1}\wedge \partial(e_{L_2}) = \zeta_{L_1} \wedge (\sum\limits_{j\in L_2}\sgn(\{j\}\subseteq L_2)f_je_{L_2\setminus\{j\}})=\sum\limits_{j\in L_2}(\sgn(\{j\}\subseteq L_2)f_j\zeta_{L_1} \wedge e_{L_2\setminus\{j\}}).
		\end{equation}
		According to Proposition \ref{Peagonres}, the above equation  (\ref{Eboundary1}) can be  written as
		\begin{equation}\label{Eboundary2}
		\sum\limits_{j\in L_2}(\sgn(\{j\}\subseteq L_2)\sgn(L_2\setminus\{j\}\subseteq L)\det\Phi^{I}_{L\setminus(L_2\setminus\{j\})}f_j.
		\end{equation} 
		If we rearrange every determinant in a way such that the $j$-th row becomes the first one, (\ref{Eboundary2}) becomes
		\begin{equation}
		\sum\limits_{j\in L_2}(\sgn(\{j\}\subseteq L_2)\sgn(L_2\setminus\{j\}\subseteq L)\sgn(\{j\}\subseteq L\setminus(L_2\setminus\{j\}))\det\Phi^{L_1}_{\{j\},L\setminus L_2}f_j
		\end{equation}
		One can verifies, $(\sgn(\{j\}\subseteq L_2)\sgn(L_2\setminus\{j\}\subseteq L)\sgn(\{j\}\subseteq L\setminus(L_2\setminus\{j\}))$ does not depend on $j\in L_2$. Thus we can ignore this sign and take
		\begin{equation}\label{Eboundary3}
		\zeta_{L_1}\wedge \partial(e_{L_2}) =\sum\limits_{j\in L_2}\det\Phi^{L_1}_{\{j\},L\setminus L_2}f_j.
		\end{equation}
		If $j\notin L_2$ then $\det\Phi^{L_1}_{\{j\},L\setminus L_2}=0$,  since  $\Phi^{L_1}_{\{j\},L\setminus L_2}$ has a repeated row. Therefore (\ref{Eboundary3}) is equal to
		\begin{equation}
		\sum\limits_{j=1 }^r\det\Phi^{L_1}_{\{j\},L\setminus L_2}f_j.
		\end{equation}
		Now, we expand every determinant in this sum over the first row. Looking at each summand separately, we have \begin{equation}
		\det\Phi^{L_1}_{\{j\},L\setminus L_2}f_j=\sum\limits_{i \in L_1}\sgn(\{i\}\subset L_1)\det\Phi^{L_1\setminus \{i\}}_{L\setminus L_2}.c_{ji}f_j.
		\end{equation}
		Summing over all $j$, we get \begin{equation}
		\zeta_{L_1}\wedge \partial(e_{L_2}) =\sum\limits_{i\in L_1}\sgn(\{i\}\subset L_1)\det\Phi^{L_1\setminus \{i\}}_{L\setminus L_2}a_i.
		\end{equation}
		This shows that $<\Gamma_\bullet \cdot B_\bullet>_r \subseteq \mathfrak a$. 
		
		As to the other inclusion, we  consider the last boundary given by 
		\begin{equation}
		\partial(e_1\wedge\dots\wedge e_r)=\sum\limits_{i=1}^r (-1)^{i+1}f_ie_1\wedge\dots \hat{e_i}\dots\wedge e_r = :z
		\end{equation}
		Then, for any $1\leq j\leq s$, we have \begin{equation}
		\zeta_j\wedge z = \sum\limits_{i=1}^r c_{ij}f_i = a_i.
		\end{equation}
		
	\end{proof}
%===============================================================	
	
	\begin{thm}\label{TkittH} Let $R$ be a commutative ring and keep the same notation as in Theorem \ref{Tgensdisguised2} with $g=\grade(I)$. Let $\tilde{H}_\bullet$ is the sub-algebra of  $K_{\bullet}(\ff;R)$ generated by the representatives of Koszul homologies. Then 
	$$\Kitt(\mathfrak a,I)= \fa+\langle\Gamma_\bullet \cdot \tilde{H}_\bullet\rangle_r=\fa+\sum\limits_{i=\max\{0,r-s\}}^{r-g}\Gamma_{r-i}\cdot\tilde{H}_{i}.$$ 
		\end{thm}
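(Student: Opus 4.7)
By Theorem \ref{Tgensdisguised2}, $\Kitt(\mathfrak a, I) = \langle \Gamma_\bullet \cdot Z_\bullet \rangle_r$, where $Z_\bullet = Z_\bullet(\ff;R)$ is the algebra of Koszul cycles. My plan is to split $Z_\bullet$ as $B_\bullet + \tilde{H}_\bullet$ (as $R$-submodules of $K_\bullet(\ff;R)$) and then invoke Lemma \ref{LHtilde}, which states that $\langle \Gamma_\bullet \cdot B_\bullet \rangle_r = \fa$.

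The containment $\tilde{H}_\bullet \subseteq Z_\bullet$ is automatic since $\tilde{H}_\bullet$ is generated by cycles and the wedge of cycles is a cycle in a DG-algebra. For the reverse, in each degree $i$ the projection $Z_i \twoheadrightarrow H_i$ admits, by construction, a set-theoretic section whose image sits in $\tilde{H}_i$; hence $Z_i = B_i + \tilde{H}_i$ as $R$-modules. To lift this pointwise decomposition to the algebra level, one uses the Leibniz rule of the Koszul DG-algebra: if $b = \partial(a) \in B_\bullet$ and $z \in Z_\bullet$, then $\partial(a \wedge z) = b \wedge z$, so $b \wedge z \in B_\bullet$. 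Expanding $\bigwedge_j (b_j + h_j)$ with $b_j \in B_\bullet$, $h_j \in \tilde{H}_\bullet$, every term except $\bigwedge_j h_j$ contains at least one boundary factor and is therefore a boundary. This yields $Z_\bullet = B_\bullet + \tilde{H}_\bullet$ and consequently
\[
\langle \Gamma_\bullet \cdot Z_\bullet \rangle_r \;=\; \langle \Gamma_\bullet \cdot B_\bullet \rangle_r + \langle \Gamma_\bullet \cdot \tilde{H}_\bullet \rangle_r \;=\; \fa + \langle \Gamma_\bullet \cdot \tilde{H}_\bullet \rangle_r,
\]
which is the first equality of the theorem.

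For the second equality, I would trim the range of $i$ in the sum $\sum_i \Gamma_{r-i} \cdot \tilde{H}_i$. Since $\Gamma_\bullet$ is generated in degree $1$ by the $s$ elements $\zeta_1,\dots,\zeta_s$ inside an exterior algebra, $\Gamma_{r-i} = 0$ whenever $r-i > s$; together with $i \geq 0$, this forces $i \geq \max\{0, r-s\}$. For the upper bound, the hypothesis $\grade(\ff) = g$ gives $H_i(\ff; R) = 0$ for $i > r - g$, so any element of $\tilde{H}_i$ in that range is a cycle representing the zero homology class, hence lies in $B_i$. Therefore $\Gamma_{r-i} \cdot \tilde{H}_i \subseteq \Gamma_{r-i} \cdot B_i \subseteq \fa$ by Lemma \ref{LHtilde}, and truncating the sum at $i = r-g$ is lossless.

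The only nontrivial point is the upgrade from the degree-wise splitting $Z_i = B_i + \tilde{H}_i$ to the algebra-level identity $Z_\bullet = B_\bullet + \tilde{H}_\bullet$; the Leibniz rule is exactly what keeps the cross-terms inside $B_\bullet$ rather than spilling out, after which the theorem follows from Theorem \ref{Tgensdisguised2} and Lemma \ref{LHtilde} by bookkeeping of degrees.
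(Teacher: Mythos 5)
Your argument is correct and follows the paper's own proof exactly: decompose $Z_\bullet=B_\bullet+\tilde H_\bullet$, apply Theorem \ref{Tgensdisguised2} and Lemma \ref{LHtilde}, and trim the degree range using $\Gamma_{r-i}=0$ for $r-i>s$ and $H_i(\ff;R)=0$ for $i>r-g$. The only remark is that your Leibniz-rule digression is not needed, since the degreewise module identity $Z_i=B_i+\tilde H_i$ (immediate because $\tilde H_i$ contains the chosen representatives of $H_i$) is already all that the computation of $\langle\Gamma_\bullet\cdot Z_\bullet\rangle_r$ requires.
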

		\begin{proof}
	According to  Theorem  \ref{Tgensdisguised2}, $\Kitt(\mathfrak a,I)= \langle\Gamma_\bullet \cdot Z_\bullet \rangle_r$. Since $ Z_\bullet= B_\bullet+\tilde{H} _\bullet$, we have  $\Kitt(\mathfrak a,I)= \langle\Gamma_\bullet \cdot B_\bullet \rangle_r+ \langle\Gamma_\bullet \cdot \tilde{H}_\bullet \rangle_r$. By Lemma \ref{LHtilde}, $\langle\Gamma_\bullet \cdot B_\bullet \rangle_r=\fa$ which yields the result.
		\end{proof}
	
\begin{rmk}\label{Rkitt}  The fact that $\fa\subseteq \Kitt(\fa, I)$ is not clear from the definition. So that concerning the  inclusions 
$$\Fitt_0(I/\fa)\subseteq \Kitt(\fa,I)\subseteq (\fa:I),$$  $\Kitt(\fa,I)$ is closer to $(\fa:I)$ than $\Fitt_0(I/\fa)$. Combining Theorem \ref{TkittH} with Proposition \ref{Pzz1}, 
one has:
 if an ideal $I=(\ff)$ in a commutative ring $R$ is such that the algebra of Koszul cycles of the Koszul complex $K_\bullet(\ff;R)$ is generated by cycles of degree one, then for any ideal $\fa\subseteq I$, $\fa\subseteq \Fitt_0(I/\fa)$.
\end{rmk}	
	Another  importance of  Theorem  \ref{TkittH} is that it connects the DG-algebra structure of Koszul homologies of $I$ to any colon ideal  $\fa:I$.   Even in the extremal cases where $I$ is complete intersection or an almost complete intersection this theorem provides highly non-trivial information about the structure of $J=\fa:I$.

	%===============================================================	

	\begin{cor}\label{Ca.c.i.}
		Let $R$ be a commutative ring and  $I = (f_1,\cdots ,f_r)=(\ff)$ be an ideal such that the Koszul homology algebra $H_\bullet(\ff;R)$ is generated by elements of degree one. Then, for any finitely generated ideal  $\mathfrak{a}\subseteq I$, one has  $\Kitt(\mathfrak a,I)={\rm Fitt}_0(I/\mathfrak a) + \mathfrak a$. 
			In particular, this is the case when $(f_1,\cdots ,f_r)$ is an almost regular sequence (grade of $I$ is $r-1$).
		
		If $(f_1,\cdots ,f_r)$ is a  regular sequence then $\Kitt(\mathfrak a,I)=I_r(\Phi) + \mathfrak a$ where $\Phi$ is an $r\times s$ matrix satisfying $\aa=\ff\cdot \Phi$. 
	\end{cor}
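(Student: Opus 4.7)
The plan is to derive both parts of the corollary from Theorem \ref{TkittH} together with Proposition \ref{Pzz1}. By Theorem \ref{TkittH} one has
$$\Kitt(\fa, I) = \fa + \langle \Gamma_\bullet \cdot \tilde{H}_\bullet \rangle_r,$$
and Proposition \ref{Pzz1} identifies $\langle \Gamma_\bullet \cdot \langle Z_1(\ff;R) \rangle \rangle_r = \Fitt_0(I/\fa)$. Since the subalgebra $\langle Z_1(\ff;R)\rangle$ is contained in $Z_\bullet(\ff;R)$, the inclusion $\Fitt_0(I/\fa) \subseteq \Kitt(\fa,I)$ is automatic; only the reverse inclusion requires the hypothesis.

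Under the assumption that the Koszul homology algebra $H_\bullet(\ff;R)$ is generated in degree one, I would choose representatives $h_1, \ldots, h_t \in Z_1(\ff;R)$ of a set of generators of $H_1(\ff;R)$, and take $\tilde{H}_\bullet$ to be the subalgebra of $K_\bullet(\ff;R)$ generated by $\{1, h_1, \ldots, h_t\}$. Since wedge products of cycles remain cycles, this subalgebra is contained in $\langle Z_1(\ff;R) \rangle$, and hence
$$\langle \Gamma_\bullet \cdot \tilde{H}_\bullet \rangle_r \subseteq \langle \Gamma_\bullet \cdot \langle Z_1(\ff;R) \rangle \rangle_r = \Fitt_0(I/\fa).$$
Combined with the automatic inclusion above, this yields $\Kitt(\fa,I) = \fa + \Fitt_0(I/\fa)$, proving the main statement.

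For the almost-regular case, depth-sensitivity of the Koszul complex gives $H_i(\ff;R) = 0$ for every $i > r - \grade(\ff) = 1$, so $H_\bullet(\ff;R)$ is concentrated in degrees $0$ and $1$ and is therefore (trivially) generated in degree one; the main statement then applies. For the regular case $\grade(\ff) = r$, all positive-degree Koszul homology vanishes, so $\tilde{H}_\bullet = R$ and $\langle \Gamma_\bullet \cdot \tilde{H}_\bullet \rangle_r = \Gamma_r$. By Lemma \ref{Peagonres} applied with $L_2 = \emptyset$, each element $\zeta_{L_1}$ with $|L_1| = r$ coincides up to sign with the maximal minor $\det \Phi^{L_1}$, so $\Gamma_r = I_r(\Phi)$ and $\Kitt(\fa,I) = \fa + I_r(\Phi)$.

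The only delicate point is the algebra-level step identifying $\tilde{H}_\bullet$ with a subalgebra of $\langle Z_1(\ff;R) \rangle$; this rests on the freedom to choose representatives of the generating homology classes inside $Z_1$ and on the closure of the cycle module under the wedge product. Everything else is a direct application of the structural identities already proved.
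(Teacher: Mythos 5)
Your proof is correct and follows essentially the same route as the paper: combining Theorem \ref{TkittH} with Proposition \ref{Pzz1} for the main statement, and reducing to $\tilde{H}_\bullet=R$ and $\Gamma_r=I_r(\Phi)$ (via Lemma \ref{Peagonres}) in the complete intersection case. In fact your write-up is more complete than the paper's one-line argument, which only treats the regular-sequence case explicitly, and you correctly identify and justify the one delicate point, namely that when $H_\bullet(\ff;R)$ is generated in degree one the subalgebra of representatives may be chosen inside $\langle Z_1(\ff;R)\rangle$.
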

	\begin{proof}
	Just notice that in the case of complete intersection $\tilde{H}_\bullet$ in Theorem \ref{TkittH} in concentrated in degree zero that is $\tilde{H}_\bullet=R$; so that 
	$$\langle\Gamma_\bullet \cdot \tilde{H}_\bullet\rangle_r=\langle\Gamma_\bullet \cdot R\rangle_r=I_r(\Phi)$$
	\end{proof}
It's clear that the construction that Kitt ideals commutes with localization. The case of specialization modulo a regular sequence $\alpha=(\alpha_1,\dots,\alpha_g)\subset\mathfrak a$ is more subtle and will be fixed in the next proposition.  The following lemma is necessary for the proof. 

%===============================================================	
	
	\begin{lemma}\label{Lhomology}
	 Let $R$ be a commutative  ring, $I = (f_1,\cdots ,f_r)=(\ff)$. Let  $f_0\in I$ be  a $R$-regular element and consider the Koszul complex $K_\bullet=R\langle e_0,\cdots ,e_r: \partial(e_i)=f_i\rangle$.Then there is an isomorphism $$H_i(f_0,\ff;R)\rightarrow H_i(\ff;R/f_0)$$ given by the map $$e_0\wedge w + w' \rightarrow {\tilde w'}$$
	 where $w \in Z_{i-1}(\ff;R)$, $w'\in K_i(f_0,\ff;R)$ and $\partial(w') =-f_0w$
	 
	\end{lemma}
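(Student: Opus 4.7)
The plan is to verify that the formula $[e_0\wedge w + w']\mapsto [\widetilde{w'}]$ defines a well-defined isomorphism $\phi\colon H_i(f_0,\ff;R)\to H_i(\ff;R/f_0)$. The conceptual reason is that $K_\bullet(f_0,\ff;R)\simeq K_\bullet(f_0;R)\otimes_R K_\bullet(\ff;R)$, and since $f_0$ is $R$-regular the augmentation $K_\bullet(f_0;R)\to R/f_0$ is a quasi-isomorphism of complexes of free $R$-modules; tensoring with the free complex $K_\bullet(\ff;R)$ preserves the quasi-isomorphism, and the induced map on tensor products sends $e_0\wedge w+w'$ to $\widetilde{w'}$. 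For a hands-on verification in the language of Lemma \ref{Lcycles}, I would carry out four checks.

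\emph{Well-definedness on cycles.} Given $z = e_0\wedge w + w'\in Z_i(f_0,\ff;R)$, Lemma \ref{Lcycles} gives $\partial(w')=-f_0 w$, hence $\partial(\widetilde{w'})=0$ in $K_{i-1}(\ff;R/f_0)$. \emph{Boundaries map to boundaries.} If $z=\partial(u)$ with $u=e_0\wedge v+v'$, $v\in K_i(\ff;R)$ and $v'\in K_{i+1}(\ff;R)$, then expanding
$$\partial(u) = f_0 v - e_0\wedge\partial(v)+\partial(v')$$
and matching components of the decomposition yields $w'=f_0 v + \partial(v')$, so $\widetilde{w'}=\partial(\widetilde{v'})$.

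\emph{Construction of an inverse.} Given $\bar y\in Z_i(\ff;R/f_0)$, lift it to some $y\in K_i(\ff;R)$. Then $\partial(y)$ reduces to $0$ modulo $f_0$, so $\partial(y)=f_0 x$ for a unique $x\in K_{i-1}(\ff;R)$, the uniqueness coming from the fact that $f_0$ is a non-zero-divisor on the free $R$-module $K_{i-1}(\ff;R)$. Applying $\partial$ again and cancelling $f_0$ forces $\partial(x)=0$, so $x\in Z_{i-1}(\ff;R)$. Setting $z := -e_0\wedge x + y$, a direct check using $\partial(e_0\wedge x)=f_0 x - e_0\wedge\partial(x)$ gives $\partial(z)=0$, and $\phi([z])=[\widetilde{y}]=[\bar y]$ by construction.

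\emph{Well-definedness of the inverse on classes.} Two lifts $y$ and $y+f_0 k$ of $\bar y$ produce $x$'s differing by $\partial(k)$, and hence $z$'s differing by $\partial(e_0\wedge k)$, a boundary; similarly, if $\bar y=\partial(\bar t)$ one lifts $\bar t$ and checks directly that the associated $z$ is a boundary in $K_\bullet(f_0,\ff;R)$. The two constructions are manifestly mutually inverse. The only place where the regularity of $f_0$ enters is the existence and uniqueness of $x$ with $f_0 x=\partial(y)$ (and, at one other spot, the cancellation $f_0\partial(x)=0\Rightarrow\partial(x)=0$); this is the single nontrivial point to track, and the main thing I would be careful about in the bookkeeping.
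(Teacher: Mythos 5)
Your argument is correct and is exactly the argument the paper has in mind: the paper's proof of this lemma simply defers to the decomposition $z=e_0\wedge w+w'$ of Lemma \ref{Lcycles} and to the standard reference [BH, Proposition 1.6.12(c)], and your four checks (well-definedness on cycles and boundaries, construction of the inverse via the unique $x$ with $f_0x=\partial(y)$, and independence of choices) are precisely the details being omitted there. Nothing is missing; you have correctly isolated the regularity of $f_0$ as the only nontrivial input.
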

	\begin{proof}
		  The proof is essentially the one of Lemma \ref{Lcycles}; see also   \cite[Proposition 1.6.12(c)]{BH}.
	\end{proof}
	
	We now prove the  last theorem in this section, showing that the disguised residual intersection specializes modulo a regular sequence contained in $\mathfrak a$.
%===============================================================	
	
		\begin{thm}\label{Tkittspecial}
		Let $R$ be a commutative  ring $\mathfrak a\subseteq I$ finitely generated ideals  and $f_0\in \mathfrak a$ an $R$-regular element. Then $\Kitt(\mathfrak a, I)/(f_0)=\Kitt(\mathfrak a/(f_0),I/(f_0))$ 
	\end{thm}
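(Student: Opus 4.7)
The strategy is to use the independence results (Propositions \ref{Pindmatrix}, \ref{Pindgensa}, \ref{Pindgensi}) to choose specific generators in which $f_0$ plays a privileged role, and then to apply Lemma \ref{Lcycles} to translate cycles in $K_\bullet(f_0,\ff;R)$ into data living over $\bar R=R/(f_0)$. First I would enlarge the generating sets so that $I=(f_0,f_1,\dots,f_r)$ and $\mathfrak a=(f_0,a_1,\dots,a_s)$, and choose the presentation matrix $\Phi$ whose first column (corresponding to $a_0=f_0$) is $(1,0,\dots,0)^T$. With this choice the element $\zeta_0$ in $K_\bullet(f_0,\ff;R)=R\langle e_0,e_1,\dots,e_r\rangle$ equals $e_0$, and for $j\ge 1$ we split $\zeta_j=c_{0j}e_0+\tilde\zeta_j$ with $\tilde\zeta_j=\sum_{i=1}^r c_{ij}e_i$ lying in the subalgebra $R\langle e_1,\dots,e_r\rangle$. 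Under the reduction $R\to\bar R$ this subalgebra maps onto $\bar K_\bullet=K_\bullet(\bar{\ff};\bar R)$, and the $\bar{\tilde\zeta}_j$ give exactly the $\bar\zeta_j$ that generate $\bar\Gamma_\bullet$.

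By Theorem \ref{Tgensdisguised2}, $\Kitt(\mathfrak a,I)$ is the $R$-submodule of $R\simeq K_{r+1}(f_0,\ff;R)$ generated by the coefficients of $\omega:=e_0\wedge e_1\wedge\cdots\wedge e_r$ in products $\zeta_L\wedge z$ with $L\subseteq\{0,1,\dots,s\}$ and $z\in Z_\bullet(f_0,\ff;R)$. For the inclusion $\Kitt(\mathfrak a,I)/(f_0)\subseteq \Kitt(\bar{\mathfrak a},\bar I)$, I would expand such a product after writing $z=e_0\wedge w+w'$ via Lemma \ref{Lcycles} (so $w\in Z_{|z|-1}(\ff;R)$ and $\partial(w')=-f_0 w$), and $\zeta_L=\tilde\zeta_L+e_0\wedge\eta_L$ when $0\notin L$ for some $\eta_L$ in the subalgebra generated by the $\tilde\zeta_j$'s. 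Using $e_0\wedge e_0=0$ and the identity $\tilde\zeta_L\wedge w'=0$ that holds by degree reasons in $R\langle e_1,\dots,e_r\rangle$, the calculation collapses to $\zeta_L\wedge z=e_0\wedge\tilde\zeta_{L\setminus\{0\}}\wedge w'$ if $0\in L$, and to $\zeta_L\wedge z=\pm e_0\wedge\tilde\zeta_L\wedge w+e_0\wedge\eta_L\wedge w'$ if $0\notin L$. In each case the coefficient of $\omega$ upstairs, reduced modulo $f_0$, equals the coefficient of $\bar e_1\wedge\cdots\wedge\bar e_r$ in an element of $\bar\Gamma_\bullet\cdot\bar Z_\bullet$ (the cycle condition on $\bar w'$ comes from $\partial(w')=-f_0 w\equiv 0\pmod{f_0}$, and $\bar\eta_L\in\bar\Gamma_\bullet$), hence lies in $\Kitt(\bar{\mathfrak a},\bar I)$.

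For the reverse containment, given a generator $\bar\zeta_L\wedge\bar z'$ of $\Kitt(\bar{\mathfrak a},\bar I)$ with $L\subseteq\{1,\dots,s\}$ and $\bar z'\in\bar Z_j(\bar{\ff};\bar R)$, lift $\bar z'$ to some $w'\in K_j(\ff;R)$. Since $f_0$ is $R$-regular and $\partial(w')\equiv 0\pmod{f_0}$, there is a unique $w\in K_{j-1}(\ff;R)$ with $\partial(w')=-f_0 w$; the identity $\partial^2=0$ forces $f_0\partial(w)=0$ and hence $\partial(w)=0$, so by Lemma \ref{Lcycles} the element $z:=e_0\wedge w+w'$ lies in $Z_j(f_0,\ff;R)$. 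The product $\zeta_{\{0\}\cup L}\wedge z=e_0\wedge\tilde\zeta_L\wedge w'$ then has coefficient (with respect to $\omega$) equal to the coefficient of $e_1\wedge\cdots\wedge e_r$ in $\tilde\zeta_L\wedge w'$, which reduces modulo $f_0$ to the coefficient of $\bar e_1\wedge\cdots\wedge\bar e_r$ in $\bar\zeta_L\wedge\bar z'$. This shows $\Kitt(\bar{\mathfrak a},\bar I)\subseteq \Kitt(\mathfrak a,I)/(f_0)$ and completes the equality.

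The principal technical point is the matching of top-degree coefficients between $K_{r+1}(f_0,\ff;R)\simeq R$ and $\bar K_r(\bar{\ff};\bar R)\simeq\bar R$: after every product has been put in the form $e_0\wedge(\text{stuff in }R\langle e_1,\dots,e_r\rangle)$ by repeatedly using $e_0\wedge e_0=0$, the residual stuff specializes faithfully to the lower Koszul algebra. Sign bookkeeping in the exterior algebra is routine and is of the same nature as in the proof of Lemma \ref{LHtilde}.
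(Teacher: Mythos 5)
Your proposal is correct and follows essentially the same route as the paper: both arguments rest on Theorem \ref{Tgensdisguised2} together with the splitting $z=e_0\wedge w+w'$ of cycles in $K_\bullet(f_0,\ff;R)$ from Lemma \ref{Lcycles}, and on the regularity of $f_0$ to pass cycles back and forth between $R$ and $R/(f_0)$. The only difference is a bookkeeping choice: you adjoin $f_0$ to the generators of $\mathfrak a$ so that $e_0=\zeta_0\in\Gamma_1$ (legitimate by Proposition \ref{Pindgensa}), whereas the paper keeps the original generators and instead uses the degree-one cycle $e_0-\sum_i\alpha_i\zeta_i\in Z_1(f_0,\ff;R)$ coming from $f_0=\sum_i\alpha_ia_i$ to the same effect; your version also spells out the easy inclusion that the paper leaves implicit.
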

	\begin{proof}
		First, we notice that $f_0\in \Kitt(\mathfrak a, I)$ by Theorem \ref{TkittH}. Also for an element $r \in R$, put  $\widetilde{r}$ to denote the image of $r$  via the projection homomorphism $R\rightarrow R/(f_0)$. 
		
		Fix generators $(f_1,\cdots ,f_r)$ of $I$, $(a_1,\cdots ,a_s)$ of $\mathfrak a$, a matrix $\Phi= (c_{ij})$ such that $(\aa)=(\ff)\cdot\Phi$, and let $\zeta_j = \sum_{i=1}^rc_{ij}e_i \in K_1(f_1,\cdots ,f_r;R)$. It's clear that $\widetilde I =(\widetilde \ff)$, $\widetilde{\mathfrak a} =(\widetilde \aa)$ and $\widetilde\Phi$ satisfies $(\widetilde\aa)=(\widetilde\ff)\cdot\widetilde\Phi$. Setting $\widetilde{\zeta_j} = \sum_{i=1}^r \widetilde{c_{ij}}e_i$ and $\widetilde{\Gamma}_\bullet=\frac{R}{(f_0)}[\widetilde{\zeta_1},\cdots,\widetilde{\zeta_s}]\subseteq K_\bullet(\widetilde{\ff};R/(f_0))$, we have, by Theorem \ref{Tgensdisguised2},
		\begin{equation}
	\Kitt(\frac{\mathfrak a}{(f_0)},\frac{I}{(f_0)})  = <\widetilde{\Gamma}_\bullet\cdot Z_\bullet(\widetilde\ff;R/(f_0))>_r.
		\end{equation}    
		Let $z\in Z_j(\widetilde\ff;R/(f_0))$, $0\leq j \leq r$ and $L_1\subseteq \{1,\cdots ,s\}$ such that $|L_1| = r-j$. We need to prove that $ \widetilde\zeta_{L_1}\wedge z$ is the specialization of some elements in $\Kitt(\mathfrak a,I).$ 
		 By Lemma \ref{Lhomology}, there is a cycle $c = e_0\wedge w + w'\in Z_j(f_0,\ff;R)$ such that $z=\widetilde{w'}$ in $ H_j(\widetilde\ff;R/(f_0))$. 
		 
		 According to Theorem \ref{TkittH}, it suffices to prove that $\zeta_{L_1}\wedge w'$ is an element in $\Kitt(\mathfrak a, I).$
 Since $f_0 \in \mathfrak a$ there exist $\alpha_i\in R$ such that
	\begin{equation}
	f_0 = \sum\limits_{i = 1}^s \alpha_ia_i.
	\end{equation}
	 
 Hence $e_0 - \sum_{i = 1}^s \alpha_i\zeta_i\in Z_1(f_0,\ff;R)$. Therefore, Theorem \ref{Tgensdisguised2}  implies that 
 $$\zeta_{L_1} \wedge (e_0 -\sum\limits_{i=1}^s \alpha_i\zeta_i)\wedge c \in \Kitt(\mathfrak a, I).$$
 On the other hand, 	
 \begin{equation}\label{Eexpression}
 \zeta_{L_1} \wedge (e_0 -\sum\limits_{i=1}^s \alpha_i\zeta_i)\wedge c = \zeta_{L_1}\wedge(-w'\wedge e_0 - \sum\limits_{i=1}^s \alpha_i\zeta_i\wedge w\wedge e_0 - \sum\limits_{i=1}^s \alpha_i\zeta_i \wedge w' ) 
 \end{equation}
 
  On the summands on the right side, we have
 
 \begin{itemize}
 	\item $\zeta_{L_1}\wedge  \sum_{i=1}^s \alpha_i\zeta_i \wedge w'$ which is zero,  since it's a wedge product of $r+1$ elements involving only $e_1,\cdots,e_r.$
 		\item $\zeta_{L_1} \wedge \sum_{i=1}^s \alpha_i\zeta_i \wedge w$ which gives us a generator of $\Kitt(\mathfrak a, I)$ by Theorem \ref{Tgensdisguised2}. %Therefore $\zeta_{L_1}\wedge e_0\wedge w$ gives us a generator of $\Kitt(\mathfrak a, I)$. 
 \end{itemize}
It then follows that $\zeta_{L_1}\wedge w'$ is an element  in $\Kitt(\mathfrak a, I)$ as desired.		
	\end{proof}

%===================================================================================
\section{Applications and Corollaries}
For the first applications of the facts developed in the previous sections, we will present the following theorem  which  proves Conjecture \ref{conj} to a certain extent.
\begin{thm}\label{Tmain}
    Let $R$ be a Cohen-Macaulay ring and  $I$ be an ideal of height $g\geq 2$ which satisfies  $\SD_1$ condition. Then any algebraic $s$-residual intersection  $J=\fa:I$ coincides with the disguised residual intersection.
\end{thm}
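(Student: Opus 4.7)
The plan is to reduce to the height $2$ case, where Theorem \ref{TaboutK}(4) already gives the conclusion, via specialization modulo $g-2$ elements of $\fa$ forming a regular sequence on $R$. We may localize to assume $(R,\fm)$ is Cohen-Macaulay local; by \cite{CNT}, $J=\fa:I$ is then already known to be Cohen-Macaulay of height $s$. By Theorem \ref{TaboutK}(1), $\Kitt(\fa,I)\subseteq J$, so only the reverse inclusion needs work.

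I would first produce a regular sequence $\alpha_1,\ldots,\alpha_{g-2}\in\fa$, regular on $R$, such that $\bar I:=I/(\alpha_1,\ldots,\alpha_{g-2})$ has height $2$ and still satisfies $\SD_1$ in $\bar R:=R/(\alpha_1,\ldots,\alpha_{g-2})$. Existence of such a sequence is achieved by the standard trick of modifying the generators of $\fa$ by elements of $\fm\cdot\fa$, an operation which alters neither $\fa:I$ (by Nakayama) nor $\Kitt(\fa,I)$ (by Propositions \ref{Pindmatrix} and \ref{Pindgensa}); after such a modification, $g-2$ of the generators of $\fa$ can be chosen to form an $R$-regular sequence, since $\Ht I=g$ forces $\grade\fa\geq g$ in the presence of the residual intersection hypothesis. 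The $\SD_1$ descent is standard: sliding depth translates through quotients by elements of $\fa$ regular on $R$ and on the relevant Koszul homologies of $\ff$. Once this is in place, because $(\alpha):=(\alpha_1,\ldots,\alpha_{g-2})\subseteq\fa\subseteq I$, the colon specializes cleanly to $\bar\fa:\bar I=J/(\alpha)$; and since $J$ is Cohen-Macaulay of height $s$ with $(\alpha)$ an $R$-regular sequence in $J$, $\bar J:=J/(\alpha)$ has height $s-g+2$ in $\bar R$. So $\bar J$ is an $(s-g+2)$-residual intersection of $\bar I$, where $\Ht\bar I=2$.

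Iterating Theorem \ref{Tkittspecial} yields $\Kitt(\fa,I)/(\alpha)=\Kitt(\bar\fa,\bar I)$, while Theorem \ref{TaboutK}(4) applied in $\bar R$ gives $\Kitt(\bar\fa,\bar I)=\bar J$. Thus $\Kitt(\fa,I)/(\alpha)=J/(\alpha)$; since $(\alpha)\subseteq\fa\subseteq\Kitt(\fa,I)$ by Theorem \ref{TkittH} and $(\alpha)\subseteq J$, this lifts to $\Kitt(\fa,I)=J$. The main obstacle is the simultaneous arrangement of all required properties of the regular sequence: being inside $\fa$, regular on $R$, cutting $I$ down to height exactly $2$, and preserving $\SD_1$ in the quotient. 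Once the sliding depth behaves well under reduction (which is the technical heart of the argument and is where the hypothesis $g\geq 2$ together with $\SD_1$, rather than just $\SD$, is crucial), everything else is essentially formal given the machinery of Sections~3 and~4.
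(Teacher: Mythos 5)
Your proposal is correct and follows essentially the same route as the paper: reduce to the Cohen--Macaulay local case, choose a regular sequence $\alpha_1,\dots,\alpha_{g-2}$ among (suitably modified) generators of $\fa$, observe that $\SD_1$ passes to the quotient (this is \cite[Proposition 4.1]{CNT}) and that $\bar J$ becomes an $(s-g+2)$-residual intersection of the height-two ideal $\bar I$, then combine Theorem \ref{TaboutK}(4) with Theorem \ref{Tkittspecial} to conclude. Your write-up merely fills in details the paper leaves implicit (existence of the regular sequence, the clean specialization of the colon, and the lifting step using $(\alpha)\subseteq\Kitt(\fa,I)$).
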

\begin{proof}
According to Theorem \ref{TaboutK}(i), $K\subseteq J$. So that to prove the equality, without loss of generality, we may assume that $R$ is a complete Cohen-Macaulay local ring and hence possesses a canonical module. Moreover $K=\Kitt(\fa,I)$ by the structure Theorems in section 4. 

Let $\alpha=\alpha_1,\cdots,\alpha_{g-2}\subseteq \fa$ be a regular sequence which is a part of $s$ generators of $\fa$. $I/\alpha$ still satisfies the $\SD_1$ condition \cite[Proposition 4.1]{CNT} and $(\frac{\fa}{\alpha}:\frac{I}{\alpha})=\frac{J}{\alpha}$ is an $(s-g+2)$-residual intersection. So that  \cite[Theorem 4.5]{CNT}( Theorem\ref{TaboutK}(4)) implies that  
$$\Kitt(\frac{\fa}{\alpha},\frac{I}{\alpha})=(\frac{\fa}{\alpha}:\frac{I}{\alpha})=\frac{J}{\alpha}.$$
Now by Theorem \ref{Tkittspecial}, we have  $$\Kitt(\frac{\fa}{\alpha},\frac{I}{\alpha})=\frac{\Kitt(\fa,I)}{\alpha}$$
which proves the theorem.
\end{proof}
Theorem \ref{Tmain} has several consequences. Indeed all of the known properties of disguised residual intersections are the properties of algebraic residual intersections if the ideal $I$ satisfies the $\SD_1$ condition. Among all, there are the Cohen-Macaulayness, the Castelnuovo-Mumford regularity and  the type of  algebraic residual intersections. We also have the following important property about the behaviors of the Hilbert functions. 
%========================================
\begin{cor}\label{CHilbert} Let $R$ be a CM standard graded ring over an Artinian local ring $R_0$.   Suppose that $I$ satisfies $SD_1$ condition.  Then for any  $s$-residual intersection  $J=(\fa:I)$, 
  the Hilbert function of  $R/J$ depends on the ideal $I$ and merely  on the \textbf{ degrees} of the generators of $\fa$.
\end{cor}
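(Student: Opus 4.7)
The plan is to combine Theorem~\ref{Tmain} with the acyclicity of the residual approximation complex to express $H(R/J,t)$ as an Euler characteristic whose constituent Hilbert series are manifestly determined by $I$ and by the degrees $(d_1,\dots,d_s)$ of a homogeneous generating set of $\fa$.

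First, by Theorem~\ref{Tmain}, $J=\Kitt(\fa,I)=H_0({}_0\mathcal Z^+_\bullet)$. Since $\SD_1$ implies $\SD$, the acyclicity criterion \cite[Theorem 2.6]{HN} quoted above applies for $k=0$ (hypothesis (ii) is automatic because $Z_0(\ff;R)=R$ has depth $d$); hence ${}_0\mathcal Z^+_\bullet$ is acyclic and
\[
H(R/J,t)\;=\;\sum_{k\ge 0}(-1)^{k}\,H\bigl({}_0\mathcal Z^+_k,t\bigr).
\]

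Second, fix homogeneous minimal generators $f_1,\dots,f_r$ of $I$ of degrees $e_1,\dots,e_r$ and homogeneous generators $a_1,\dots,a_s$ of $\fa$ of degrees $d_1,\dots,d_s$. Giving $S=R[T_1,\dots,T_r]$ the grading $\deg T_i=e_i$ makes each $\gamma_j=\sum_i c_{ij}T_i$ homogeneous of degree $d_j$, and the bi-complex $\mathcal D_\bullet=\Tot(\mathcal Z_\bullet(\ff;R)\otimes_S K_\bullet(\ag;S))$ becomes bi-graded with differentials of internal degree $0$. This grading descends to ${}_0\mathcal Z^+_\bullet$ through the \v Cech cohomology $H^r_{\mathfrak t}(S)$. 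In internal degree $0$ the $k$-th term of ${}_0\mathcal Z^+_\bullet$ is a finite direct sum of shifted copies of $Z_i(\ff;R)\otimes_R\bigwedge^{k-i}R^s$, the shifts being monomials in $t^{e_i}$ and $t^{d_j}$; consequently each $H\bigl({}_0\mathcal Z^+_k,t\bigr)$ is a $\mathbb Z[t]$-linear combination of the series $H(Z_i(\ff;R),t)$ with coefficients depending only on the multisets $(e_i)$ and $(d_j)$.

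Third, observe that both the multiset $(e_1,\dots,e_r)$ and each Hilbert series $H(Z_i(\ff;R),t)$ are invariants of the graded ideal $I$: the $e_i$ are read off from the minimal graded free presentation of $I$, and any two minimal homogeneous generating sets of $I$ matching these degrees differ by an invertible homogeneous change of basis of $\bigoplus_i R(-e_i)$, which induces a grading-preserving isomorphism of the associated Koszul complexes and hence of their cycle modules. Plugging back into the Euler characteristic formula shows that $H(R/J,t)$ is a function of $I$ and $(d_1,\dots,d_s)$ alone. The chief obstacle---really a bookkeeping one---is to transport the bi-grading through the spectral-sequence construction of Section~\ref{Snewstructure} and verify that the comparison map $\tau_0$ is homogeneous; this is readily checked from the explicit formula $\tau_0=\varepsilon_0$ in Theorem~\ref{Tlifting} once $T_i$ is assigned degree $e_i$ rather than $1$.
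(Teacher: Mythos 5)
Your argument is correct and follows the same route as the paper: the paper simply invokes Theorem~\ref{Tmain} together with \cite[Proposition 3.1]{HN}, and that cited proposition is proved exactly by the Euler-characteristic computation on the acyclic complex ${}_0\mathcal Z^+_\bullet$ that you carry out explicitly. The only caveat, inherited from Theorem~\ref{Tmain} and equally present in the paper's own proof, is the implicit hypothesis $\Ht(I)\geq 2$.
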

\begin{proof}
The fact has been already proved for disguised residual intersections in  \cite[Proposition 3.1]{HN}. Due to Theorem \ref{Tmain} the disguised residual intersection is the same as the algebraic residual intersection for ideals with SD$_1$.
\end{proof}
%====================================================
Besides the coincidence of disguised and algebraic residual intersections, we have the structure of the generators of the disguised residual intersections by Theorem \ref{TkittH}. This fact leads to some important classification of residual intersections.
We mention one immediate corollary here. 
\begin{cor} Let $R$ be a Cohen-Macaulay ring and  $I$ be a complete intersection ideal generated by $\ff=f_1,\cdots, f_r$. Suppose that  $J=\fa:I$ is an algebraic $s$-residual intersection of $I$. Then $J=I_r(\Phi) + \mathfrak a$ 
	  where $\Phi$ is   an $r\times s$ matrix satisfying $\aa=\ff\cdot \Phi$.
  
\end{cor}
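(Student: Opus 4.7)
The plan is to combine two ingredients already established in the preceding sections: Theorem \ref{Tmain}, which identifies the algebraic residual intersection $J$ with $\Kitt(\fa,I)$ under $\SD_1$, and Corollary \ref{Ca.c.i.}, which computes $\Kitt(\fa,I)$ for complete intersections in closed form.

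First, I would verify that a complete intersection ideal $I=(f_1,\dots,f_r)$ of grade $g=r$ in a Cohen-Macaulay ring automatically satisfies $\SD_1$. Indeed, since $\ff$ is a regular sequence, the higher Koszul homologies $H_i(\ff;R)$ vanish for $i\geq 1$, so the depth bound in Definition \ref{dsd} is trivially satisfied in those degrees. In degree zero, $H_0(\ff;R)=R/I$ is Cohen-Macaulay of depth $d-g$, and the required bound reads $\depth(R/I)\geq \min\{d-g,\,d-r+1\}=d-g$, which holds since $r=g$.

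Second, assuming $g\geq 2$, Theorem \ref{Tmain} applies and yields the equality $J=\Kitt(\fa,I)$. The second statement of Corollary \ref{Ca.c.i.}, applied to the regular sequence $\ff$ and to any $r\times s$ matrix $\Phi$ with $\aa=\ff\cdot\Phi$, then gives $\Kitt(\fa,I)=I_r(\Phi)+\fa$. Chaining the two identifications produces the desired formula $J=I_r(\Phi)+\fa$. The border case $g=1$, if one wishes to include it, is handled directly: $f_1$ is then a nonzerodivisor, $a_j=c_j f_1$, and $\fa:(f_1)=(c_1,\dots,c_s)+\fa=I_1(\Phi)+\fa$.

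There is no real obstacle in this argument: the statement is essentially a packaging of Theorem \ref{Tmain} with the complete intersection specialization of the $\Kitt$ construction. All the substantive content, namely the coincidence of disguised and algebraic residual intersections for $\SD_1$ ideals, the independence of $\Kitt(\fa,I)$ from the choice of generators and representation matrix, and the specialization behavior modulo regular sequences, has been carried out in the preceding sections. Consequently the proof should be only a few lines.
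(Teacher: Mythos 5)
Your proposal is correct and follows essentially the same route as the paper, which simply observes that complete intersections satisfy $\SD_1$, invokes Theorem \ref{Tmain} to get $J=\Kitt(\fa,I)$, and concludes by Corollary \ref{Ca.c.i.}. Your explicit verification of $\SD_1$ and your separate treatment of the border case $g=1$ (which Theorem \ref{Tmain} formally excludes since it assumes $g\geq 2$) are welcome refinements that the paper glosses over, but they do not change the argument.
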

\begin{proof}
Complete intersections are $\SD_1$ obviously so that we have $J=\Kitt(\fa,I)$ by Theorem \ref{Tmain}. The result now follows from Corollary \ref{Ca.c.i.}.
\end{proof}
This corollary  provides  generalizations to \cite[Theorem 4.8]{BKM} and  \cite[Theorem 5.9(i)]{HU}.  The former works for geometric residual intersections and the latter needs $R$ to be Gorenstein domain, the proof of the latter in turn appeals a result of Deconcini and E. Strickland \cite{DS} to determine the structure of residual intersection ideal $J$.

As far as we know, there is no  structural result as above  for residual intersections of almost complete intersections. We have the following.

%====================================================

\begin{cor} Let $R$ be a Cohen-Macaulay ring and  $I$ be an almost complete intersection ideal which is Cohen-Macaulay. Let $J=\fa:I$ be an algebraic $s$-residual intersection of $I$. Then $J=\Fitt_0(I/\fa)+\fa$. 
\end{cor}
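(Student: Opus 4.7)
The plan is essentially the same as the preceding corollary on complete intersections: combine Theorem~\ref{Tmain} to get $J=\Kitt(\fa,I)$, and then apply Corollary~\ref{Ca.c.i.} to compute $\Kitt(\fa,I)$ explicitly. Writing $I=(f_1,\dots,f_r)$ with $g=\grade(I)=r-1$, the generating sequence $\ff$ is almost regular, so the final sentence of Corollary~\ref{Ca.c.i.} immediately gives $\Kitt(\fa,I)=\Fitt_0(I/\fa)+\fa$. Everything then reduces to verifying that $I$ satisfies the sliding depth condition $\SD_1$, so that Theorem~\ref{Tmain} is applicable.

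To check $\SD_1$, first observe that $H_r(\ff;R)=(0:_R I)$ vanishes, since $\grade(I)\geq 1$; thus the Koszul homologies $H_i(\ff;R)$ are concentrated in degrees $0\leq i\leq g$, and each is an $R/I$-module, hence of depth at most $d-g$ with $d=\dim R$. I would invoke the classical fact that a Cohen-Macaulay almost complete intersection in a Cohen-Macaulay ring is strongly Cohen-Macaulay (Avramov-Herzog), which yields the reverse inequality $\depth H_i(\ff;R)\geq d-g$ whenever $H_i\neq 0$. Since $d-r+i+1=d-g+i\geq d-g$ for every $i\geq 0$, one has $\depth H_i(\ff;R)\geq \min\{d-g,\,d-r+i+1\}$, which is exactly $\SD_1$. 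Theorem~\ref{Tmain} then gives $J=\Kitt(\fa,I)$, and combining with the formula above completes the proof.

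The main obstacle is precisely the SCM input for CM almost complete intersections; Theorem~\ref{Tmain} and Corollary~\ref{Ca.c.i.} themselves are already developed earlier in the paper and apply mechanically once $\SD_1$ is in hand. If one prefers a self-contained argument, an alternative route is to induct on $g$ via Theorem~\ref{Tkittspecial}, cutting by a regular element inside a minimal generating set of $I$ (the ACI property being preserved), and eventually landing in the height-$2$ case handled by Theorem~\ref{TaboutK}(4); Lemma~\ref{Lhomology} would then be used to track the specialization of the relevant Koszul homologies. Finally, the boundary case $g=1$ (where $r=2$ and Theorem~\ref{Tmain} does not literally apply) is handled directly from classical linkage theory, where the identity $J=\Fitt_0(I/\fa)+\fa$ is well known.
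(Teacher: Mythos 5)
Your proposal takes essentially the same route as the paper, whose entire proof is the observation that Cohen--Macaulay almost complete intersections are SCM (hence $\SD_1$), followed by Theorem \ref{Tmain} and Corollary \ref{Ca.c.i.}; your verification that SCM gives $\depth H_i(\ff;R)\geq d-g=\min\{d-g,\,d-r+i+1\}$ is exactly the implicit step there. Your extra attention to the $g\le 1$ boundary of Theorem \ref{Tmain} is more than the paper records, but the core argument is identical.
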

\begin{proof}
Since almost complete intersection CM ideals are SCM,  this is another  consequence of  Theorem \ref{Tmain} and  Corollary \ref{Ca.c.i.}.
\end{proof}
%====================================================
The DG-algebra structure of $I$ has some non-trivial impacts on the structure of residual intersections. For instance we have
\begin{cor}
Let $R$ be a Cohen-Macaulay ring and  $I$ be a perfect ideal of height $2$. Let $J=\fa:I$ be an algebraic $s$-residual intersection of $I$. Then $J=\Fitt_0(I/\fa)$.
\end{cor}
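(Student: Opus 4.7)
The plan is to reduce the statement, via Theorem~\ref{Tmain}, to an identity of the form $\Kitt(\fa,I)=\Fitt_0(I/\fa)$, and then to obtain the latter directly from Proposition~\ref{Pzz1} by verifying that the Koszul cycle algebra of a perfect height two ideal is generated in degree one.

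For the first step, since $I$ is perfect of height $2$ in a Cohen--Macaulay ring, $I$ is strongly Cohen--Macaulay (a classical result of Avramov--Herzog; alternatively, this follows from the fact that height two perfect ideals are licci together with Huneke's linkage theorem). In particular, $I$ satisfies $\SD_1$, and since $\Ht(I)=g=2\geq 2$, Theorem~\ref{Tmain} applies and yields $J=\Kitt(\fa,I)$.

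For the second step, by the Hilbert--Burch theorem $I$ admits a minimal free resolution
$$0\to R^{n-1}\xrightarrow{\phi}R^n\xrightarrow{(f_1,\dots,f_n)}R,$$
so the columns $\zeta_1,\dots,\zeta_{n-1}$ of $\phi$, viewed as elements of $K_1(\ff;R)$, are $1$-cycles; moreover, the Koszul boundaries $\partial(e_i\wedge e_j)=f_je_i-f_ie_j$ themselves lie in $Z_1$ and are $R$-linear combinations of the $\zeta_\ell$'s by Hilbert--Burch. It is then a classical structural fact that the entire cycle algebra $Z_\bullet(\ff;R)$ is generated as an $R$-algebra by $\zeta_1,\dots,\zeta_{n-1}$, i.e.\ every higher-degree cycle is an $R$-linear combination of wedge products of these $1$-cycles. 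Applying Proposition~\ref{Pzz1} we obtain $\Kitt(\fa,I)=\Fitt_0(I/\fa)$, which combined with Step~1 delivers $J=\Fitt_0(I/\fa)$.

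The main obstacle is the second step, specifically the upgrade from generation of the \emph{homology} algebra $H_\bullet(\ff;R)$ in degree one (which is essentially automatic from the DG-algebra structure of the Hilbert--Burch resolution) to the analogous statement for the full \emph{cycle} algebra $Z_\bullet(\ff;R)$. Koszul boundaries in degrees $\geq 2$ do not \emph{a priori} decompose as wedge products of $1$-cycles, and a careful degree-by-degree argument, using the Hilbert--Burch relations (equivalently, the Buchsbaum--Eisenbud complex structure developed in Section~3), is needed to absorb these boundaries into the sub-algebra generated by $Z_1$.
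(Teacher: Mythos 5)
Your proposal is correct and follows essentially the same route as the paper: Theorem \ref{Tmain} (via the fact that perfect height-two ideals in a CM ring are strongly Cohen--Macaulay, hence $\SD_1$) gives $J=\Kitt(\fa,I)$, and Proposition \ref{Pzz1} reduces the claim to the degree-one generation of the cycle algebra $Z_\bullet(\ff;R)$. The ``main obstacle'' you flag at the end is exactly the content of the Avramov--Herzog result the paper cites (\cite[Proof of Theorem 2.1(e)]{AH}), so that step is closed by a reference rather than by a new degree-by-degree argument.
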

\begin{proof}
A result of Avramov-Herzog \cite[Proof of Theorem 2.1(e)]{AH} shows that for perfect ideals of height $2$ the algebra of cycles of Koszul is generated in degree $1$. So that the result follows from Theorem \ref{Tmain} and  Proposition \ref{Pzz1}.
\end{proof}
There are other ways to prove the above result, see for example \cite{Hu},\cite{KU} or \cite[Theorem 1.1]{CEU}.
%====================================================

We can also study the common annihilator of Koszul homologies using Corollary \ref{CannH} without any presence of Sliding depth hypotheses. 
\begin{cor}
		Let $R$ be a Cohen-Macaulay local ring and   $\mathfrak a\subseteq I=(f_1,\cdots,f_r)=(\ff)$ ideals of $R$. Let $J=\fa:I$ be an $s$-residual intersection of $I$.
		 Suppose in addition that either $\fa$ is NOT generated by an analytic independent set of generators or else $\Ht(J)\geq s+1$.
		 Then   $$  \bigcap\limits_{\max\{0,r-s\}}^{r}\Ann H_i(\ff;R)\subseteq \bar{\fa}$$
		 where $\bar{\fa}$ is the integral closure of $\fa$.
\end{cor}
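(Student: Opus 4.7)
The plan is to exploit Corollary \ref{CannH} to enlarge $I$ without disturbing the residual-intersection structure, and then to invoke a classical criterion relating residual intersections to integral dependence.

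Set $I' := \bigcap_{i=\max\{0,r-s\}}^{r} \Ann H_i(\ff;R)$. Because Koszul homology is annihilated by the defining ideal, $\fa \subseteq I \subseteq I'$. Since $R$ is Cohen-Macaulay, Corollary \ref{CannH} gives $\Ht(I)=\Ht(I')$ and asserts that $J' := (\fa:I')$ is itself an $s$-residual intersection; moreover, using Theorem \ref{TaboutK}(1) for both $J$ and $J'$ together with the Kitt-invariance $\Kitt(\fa,I)=\Kitt(\fa,I')$ from Corollary \ref{CannH}, we obtain $\sqrt{J}=\sqrt{J'}$, hence $\Ht(J')=\Ht(J)$. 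Analytic (in)dependence of $\fa$ is intrinsic to $\fa$, so the dichotomy transfers verbatim: either $\fa$ is not analytically independent, or $\Ht(J') \geq s+1$.

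It now suffices to show $I' \subseteq \bar{\fa}$. This will follow from the classical statement: if $\fa \subseteq L$ with $\mu(\fa) \leq s$ and $(\fa:L)$ an $s$-residual intersection, then $L \subseteq \bar{\fa}$ whenever $\fa$ is not analytically independent or $\Ht(\fa:L) \geq s+1$. Equivalently, $L \not\subseteq \bar{\fa}$ can happen only when $\fa$ is analytically independent and the colon height is exactly $s$. Applying this with $L = I'$ yields the desired containment.

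The main obstacle is the classical integral-closure statement invoked in the last paragraph; it is not formally part of the paper and must be supplied from the theory of reductions and analytic spread. In case $\Ht(J') \geq s+1$, one uses that $J'$ contains a regular element modulo $\fa$ together with $J' \cdot I' \subseteq \fa$ to deduce, via a Nakayama/Artin-Rees type argument on the extended Rees algebra, that $\fa I'^n = I'^{n+1}$ for some $n$, so $\fa$ is a reduction of $I'$. In the case where $\fa$ is not analytically independent, $\ell(\fa) < s$ furnishes a reduction $\fb \subseteq \fa$ with $\mu(\fb) < s$; the inclusion $\fb \subseteq I'$ and the inequality $\Ht(\fb:I') \geq \Ht(\fa:I') \geq s > \mu(\fb)$ combined with the same height-versus-generator counting show $I' \subseteq \bar{\fb} = \bar{\fa}$. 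The paper's machinery enters only through the $I \leadsto I'$ reduction supplied by Corollary \ref{CannH}; the remaining step is purely reduction-theoretic.
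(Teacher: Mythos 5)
Your argument is correct and is essentially the paper's: apply Corollary \ref{CannH} to replace $I$ by $I'=\bigcap\Ann H_i(\ff;R)$, so that $\fa:I'$ is again an $s$-residual intersection, and then invoke the classical Huneke--Ulrich criterion for integral dependence, which is exactly \cite[Proposition 3]{U1} cited in the paper's one-line proof. Your explicit transfer of the height hypothesis to $J'=\fa:I'$ via $\sqrt{J}=\sqrt{J'}$ is a detail the paper leaves implicit, and the reduction-theoretic sketch in your final paragraph can simply be replaced by that citation.
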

\begin{proof}
The proof is a consequence of Corollary \ref{CannH} applying to a nice result of Huneke and(or) Ulrich \cite[Proposition 3]{U1}. 
\end{proof}
%====================================================
Although the $\SD_1$ condition appears in the Theorem \ref{Tmain} and hence we need it in all of the corollaries, it is not in an essential way. If one can show that the disguised residual intersection and the algebraic residual intersection coincide for any nice class of ideals of small height then the techniques above will provide  equality $\Kitt(\fa,I)=(\fa:I)$ quite generally. 
However, sliding depth conditions are necessary if one seeks Cohen-Macaulay residual intersections \cite{U}. On the other hand, we guess that to prove $\Kitt(\fa,I)=(\fa:I)$ one can totally forget it.
\begin{conj}\label{CojK=J}
Let $R$ be a Cohen-Macaulay ring then $\Kitt(\fa,I)=(\fa:I)$ whenever $J=\fa:I$ is an algebraic $s$-residual intersection.
\end{conj}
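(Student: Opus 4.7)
The plan is to attack this conjecture by pursuing the same reduction strategy as in Theorem \ref{Tmain}, while trying to bypass the need for the $\SD_1$ hypothesis. The inclusion $\Kitt(\fa,I)\subseteq (\fa:I)$ is already granted by Theorem \ref{TaboutK}(1), so only the reverse inclusion is in question: every $x\in (\fa:I)$ must be shown to lie in $\langle \Gamma_\bullet\cdot Z_\bullet\rangle_r$. Since the two ideals have the same radical, the problem is one of a finite amount of extra generators.

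The first moves would be standard. One may localize at a minimal prime of $J$ of height $s$, since both the colon operation and, by Subsection \ref{SSindependent}, the construction of $\Kitt$ commute with localization. Then, by iteratively applying Theorem \ref{Tkittspecial} to a regular sequence $\alpha_1,\dots,\alpha_{g-2}$ drawn from the $s$ generators of $\fa$, exactly as in the proof of Theorem \ref{Tmain}, one reduces to the case of an $(s-g+2)$-residual intersection of an ideal of height two. This is the same setting in which Theorem \ref{TaboutK}(4) (i.e.\ \cite[Theorem 4.5]{CNT}) establishes the equality under $\SD_1$, and so the entire conjecture comes down to this height-two case with no sliding depth hypothesis at all.

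To handle that case, a natural first attempt is a generic-to-special argument. Let $\mathbf X=(X_{ij})$ be a matrix of indeterminates over $R$, form a generic lift $\tilde\Phi$ of $\Phi$ over $R[\mathbf X]$ and the corresponding generic residual intersection; by standard generic perfection results the lifted ideal has very strong depth properties, so Theorem \ref{Tmain} applies and the equality $\Kitt=(\fa:I)$ holds generically. One would then specialize back to $R$ and attempt to transfer the equality, using flatness of $R\to R[\mathbf X]$ on one side and Theorem \ref{Tkittspecial} on the other. A parallel approach is to argue directly on the DG-algebra structure developed in Theorem \ref{TkittH}, showing that any element of $(\fa:I)/\fa$ is realized as $\Gamma_\bullet\cdot \tilde H_\bullet$ in top degree via a constructive cocycle lift in the residual approximation complex.

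The hard part will be precisely this transfer or lifting step. The colon $(\fa:I)$ is only left-exact and notoriously badly behaved under specialization, whereas $\Kitt$ specializes cleanly by Theorem \ref{Tkittspecial}; bridging that gap in the absence of acyclicity of $_0\mathcal Z^+_\bullet$ is the main difficulty, since all known proofs of $K=J$ route through a Cohen-Macaulay structural theorem on $R/J$ that requires sliding depth. A cleaner route would be a bespoke duality in height two expressing $(\fa:I)/\Kitt(\fa,I)$ as a cohomology module supported off $V(I+J)$ and forcing its vanishing from $\Ht(J)\geq s$ alone, independent of the depths of the Koszul homologies $H_i(\ff;R)$; producing such a duality is, in my view, the real content of the conjecture.
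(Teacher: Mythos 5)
The statement you are proving is Conjecture \ref{CojK=J}: the paper itself does not prove it, and neither does your proposal. What you have written is a research programme rather than a proof, and you concede as much in your final paragraph. The reduction steps you outline are sound and do mirror what the paper actually carries out in its partial results: the containment $\Kitt(\fa,I)\subseteq\fa:I$ from Theorem \ref{TaboutK}(1), localization (legitimate by Subsection \ref{SSindependent}), and specialization modulo a regular sequence inside $\fa$ via Theorem \ref{Tkittspecial} are exactly the moves of Theorem \ref{Tmain} and Proposition \ref{Ps<g+1}, and they correctly isolate the residual height-two (or height-zero, in Proposition \ref{Ps<g+1}) case as the crux. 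But none of the three routes you sketch for closing that case is carried out, and the first one has a concrete obstruction you should name more precisely: in the generic-to-special argument the ideal $\fa$ itself changes when you pass to the generic matrix $\tilde\Phi$, and the colon ideal can strictly grow under specialization, so the generic equality $\Kitt=\fa:I$ over $R[\mathbf X]$ gives only $\Kitt(\fa,I)\subseteq \fa:I$ back over $R$ --- which is the containment you already had for free. The failure of colon ideals to specialize is precisely why the literature needs $\SD_1$, the arithmetic hypothesis, or $s\le g+1$.

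Note also that Example \ref{Example} shows the equality genuinely fails for a colon ideal that is not a residual intersection, so any successful argument must use the hypothesis $\Ht(J)\geq s$ in an essential, non-formal way; in Proposition \ref{Ps<g+1} this enters through the existence of an $R$-regular element of $J$, and your proposal never explains how the height condition would be exploited in the height-two case. The ``bespoke duality'' you call for is exactly the content of \cite[Theorem 4.5]{CNT} under $\SD_1$, and producing it without any sliding depth hypothesis is the open problem, not a step one can defer. In short: correct identification of the reduction, but the conjecture remains exactly as open after your proposal as it is in the paper.
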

As corollaries, one can remove SD$_1$ condition from the results in this section.
By the way one may not expect that $\Kitt(\fa,I)=(\fa:I)$ for any pair of ideals $\fa$ and $I$.

\begin{exa}\label{Example}Let $R=\mathbb{Z}_3[x,y,z,t]$, $I=(x^2,y^2,xy,xt-yz)$ and $\fa=(x^4,y^4,x^2y^2)$. 
Then $J=\fa:I$ has height $2$ so that it is not a $3$-residual intersection. One can check that the Koszul homology algebra of $I$ is generated in degree $1$; so that  $\Kitt(\fa,I)=\fa+\Fitt_0(I/\fa)$ by Corollary \ref{Ca.c.i.}. A Macaulay verification shows that  $\Kitt(\fa,I)\neq J$.

\end{exa}

In the following proposition, we prove Conjeture \ref{CojK=J} in the case where $s\leq g+1$. 

\begin{prop}\label{Ps<g+1}
Let $R$ be a Cohen-Macaulay ring, $I\subset R$ an ideal with $\Ht(I) = g$ and $J = \mathfrak a:I$  an $s$-residual intersection.  If $s\leq g+1$  then $\Kitt(\mathfrak a:I) = J$.
\end{prop}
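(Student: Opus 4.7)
The approach is to treat the cases $s=g$ and $s=g+1$ separately, reducing each to a degenerate situation by specializing modulo a suitable regular sequence contained in $\fa$ via Theorem \ref{Tkittspecial}. As a preliminary, I would record that $\Ht(\fa)=g$: the inclusion $\fa\subseteq I\cap J$ gives $V(\fa)\supseteq V(I)\cup V(J)$, and conversely if $\fp\supseteq\fa$ with $I\not\subseteq\fp$, picking $f\in I\setminus\fp$ yields $fJ\subseteq\fa\subseteq\fp$, forcing $J\subseteq\fp$. Hence $V(\fa)=V(I)\cup V(J)$ and $\Ht(\fa)=\min(\Ht(I),\Ht(J))=\min(g,s)=g$, so $\grade(\fa)=g$ in the CM ring $R$.

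For $s=g$, the $g$ given generators of $\fa$ form a regular sequence, so $\fa=(\alpha_1,\dots,\alpha_g)$ is a complete intersection. Iterated application of Theorem \ref{Tkittspecial} along $\alpha_1,\dots,\alpha_g$ yields $\Kitt(\fa,I)/\fa=\Kitt(0,I/\fa)$. With $\fa'=0$ the presentation matrix is zero, every $\zeta_j$ vanishes, and the subalgebra $\Gamma_\bullet$ of Theorem \ref{Tgensdisguised2} collapses to $R/\fa$ in degree zero; hence $\Kitt(0,I/\fa)=Z_r$, the top Koszul cycle module for the complex $K_\bullet(\bar{\ff};R/\fa)$. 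A direct inspection of the top Koszul differential identifies $Z_r$ with $0:_{R/\fa}(I/\fa)=J/\fa$, and therefore $\Kitt(\fa,I)=J$.

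For $s=g+1$, working locally so that prime avoidance is available, I would change generators of $\fa$ to the form $\fa=(\alpha_1,\dots,\alpha_g,b)$ with $\alpha:=(\alpha_1,\dots,\alpha_g)$ an $R$-regular sequence; this is possible because $\grade(\fa)=g$. Iterating Theorem \ref{Tkittspecial} along $\alpha$ reduces the problem to verifying $\Kitt((\bar b),\bar I)=(\bar b):_{\bar R}\bar I$ in $\bar R:=R/\alpha$, where $\bar\fa=(\bar b)$ is principal and $\bar J:=J/\alpha$ has height at least $1$ (since $\Ht(J)\geq s=g+1$). Note that $\bar b$ is necessarily a zerodivisor in $\bar R$, since otherwise $(\alpha,b)$ would be a regular sequence of length $g+1$ in $\fa$, contradicting $\Ht(\fa)=g$.

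This principal-$\bar\fa$ case is where the main obstacle lies. For $y\in(\bar b):_{\bar R}\bar I$, writing $y\bar f_i=u_i\bar b$, the natural candidate $w=\sum_i(-1)^{i+1}u_i\,e_1\wedge\cdots\widehat{e_i}\cdots\wedge e_r$ fails to be a Koszul cycle: its boundary has coefficients $u_qf_p-u_pf_q$ which merely lie in $0:_{\bar R}\bar b$. Nonetheless $\bar b\cdot w$ is a genuine cycle, and pairing with $\zeta_1$ gives $\zeta_1\wedge(\bar bw)=\bar by\cdot e_1\wedge\cdots\wedge e_r$, so $\bar by\in\Kitt((\bar b),\bar I)$. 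To promote $\bar by\in\Kitt$ to $y\in\Kitt$, I would invoke Corollary \ref{CannH} to enlarge $\bar I$ to $\bar I':=\bigcap_i\Ann H_i(\bar\ff;\bar R)$ without changing the Kitt ideal, and then exploit the height-$\geq 1$ hypothesis on $\bar J$ together with Theorem \ref{TaboutK}(ii) in the regime $\mu(\bar I'/(\bar b))\leq 1$ to close the gap. The structural feature that makes $s\leq g+1$ tractable is that after specialization $\Gamma_\bullet$ is generated by the single element $\zeta_1$, so only cycles in degrees $r$ and $r-1$ contribute to $\Kitt$, and these are precisely controlled by Lemma \ref{Lcycles} and the annihilator description of Corollary \ref{CannH}.
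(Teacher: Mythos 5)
Your reduction is the same as the paper's: normalize the generators of $\fa$ so that the first $g$ form a regular sequence, specialize via Theorem \ref{Tkittspecial}, and reduce to height $0$ with $s\in\{0,1\}$. The $s=g$ case is handled correctly and exactly as in the paper ($\Gamma_\bullet=R$, so $\Kitt(0,\bar I)=Z_r=0:\bar I$). The problem is the $s=g+1$ case, where you correctly isolate the crux --- for $y\in(\bar b):\bar I$ the natural candidate $w=\sum(-1)^{i+1}u_i\,\hat e_i$ is not a cycle, and you only obtain $\bar b\,y\in\Kitt$ --- but the mechanism you propose to upgrade $\bar b\,y\in\Kitt$ to $y\in\Kitt$ does not work. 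Theorem \ref{TaboutK}(2) requires $I$ to be generated by the generators of $\fa$ plus a \emph{single} extra element, i.e.\ $\mu(I/\fa)\le 1$; this is a hypothesis on the pair $(\fa,I)$ that you cannot arrange, and replacing $\bar I$ by $\bar I'=\bigcap_i\Ann H_i(\bar\ff;\bar R)$ via Corollary \ref{CannH} gives no control whatsoever on $\mu(\bar I'/(\bar b))$. As written, the $s=g+1$ case is not proved.

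The paper closes exactly this gap by a regular-element bootstrap, which is the one genuinely new idea in the proof and is missing from your proposal. Since $\Ht(\bar J)\ge 1$ and $\bar R$ is Cohen--Macaulay, $\bar J$ contains a regular element $\alpha$. Writing $\alpha f_i=\alpha_i f_1$ (with $f_1=b$ after adjoining $b$ to the generators of $I$ via Proposition \ref{Pindgensi}), the required determinantal identities $\alpha_if_j=\alpha_jf_i$ follow after multiplying by $\alpha$ and cancelling the regular factor; hence $\alpha\in\Kitt$. For an arbitrary $\beta\in\bar J$ with $\beta f_i=\beta_if_1$, one computes that $(f_i\beta_j-f_j\beta_i)(\alpha+\beta)=0$ and $(f_i\beta_j-f_j\beta_i)\beta=0$, whence $(f_i\beta_j-f_j\beta_i)\alpha=0$ and, by regularity of $\alpha$ again, $f_i\beta_j=f_j\beta_i$; so $\sum(-1)^{i+1}\beta_i\hat e_i$ really is a cycle in $Z_{r-1}$ and $\zeta_1$ wedged with it produces $\beta$. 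You should replace the appeal to Corollary \ref{CannH} and Theorem \ref{TaboutK}(2) with this argument (or an equivalent one); note also that this is the only place the Cohen--Macaulay hypothesis is actually used, namely to convert $\Ht(\bar J)\ge 1$ into $\grade(\bar J)\ge 1$.
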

\begin{proof}
Let $a_1,\cdots,a_s$ be a set of generators of $\mathfrak a$. By Proposition \ref{Pindgensa}, we can suppose that $a_1,\cdots,a_g$ is a regular sequence. By Theorem \ref{Tmain}, we can mod out this regular sequence and hence we may and do suppose that $g=\Ht(I) = 0.$ Let $f_1,\cdots,f_r$ be a set of generators of $I$ and let $K_\bullet = R<e_1,\cdots,e_r;\partial(e_i)=f_i>$ be the Koszul complex of $\ff$.

If $s = 0$, then $\mathfrak a = 0$ and we must show that $\Kitt((0),I) = (0:I)$. In this case, $\Gamma_\bullet = R$ and we have
$$\Kitt(\mathfrak a, I) = \Gamma_0 \cdot Z_r(\ff;R)= R\cdot(0:I) = (0:I)=J.$$

If $s = 1$, then by Proposition \ref{Pindgensi} we may suppose that $\mathfrak a = (f_1)$. In this case, $\Gamma_\bullet = R\oplus R\cdot e_1$ and then $$\Kitt(\mathfrak a, I) = \Gamma_0 \cdot Z_r(\ff;R)+\Gamma_1\cdot Z_{r-1}.$$

Write $\hat{e_i}$ for $e_1\wedge\cdots\wedge\hat{e_i}\wedge\cdots\wedge e_r$. Then $c = \sum\limits_{i=1}^ra_i\hat{e_i}\in Z_{r-1}(\ff;R)$ if and only if 
$$I_2\begin{bmatrix} a_1 & a_2 & \cdots & a_r\\f_1 & f_2 &\cdots & f_r
\end{bmatrix} = 0.$$

Moreover the element of $\Kitt(\mathfrak a, I)$ produced by $c$ is $e_1\wedge c = a_1$. Let $\alpha \in J$. To show that $\alpha \in \Kitt(\mathfrak a, I)$, one needs to show that there are $\alpha_2,\cdots,\alpha_r\in R$ such that 
\begin{equation}\label{EI2}
    I_2\begin{bmatrix} \alpha & \alpha_2 & \cdots & \alpha_n\\f_1 & f_2 &\cdots & f_r
\end{bmatrix} = 0.
\end{equation}
%where $\alpha_1=\alpha$. 

Since $\alpha \in J=(f_1):I$, there are $\alpha_2,\cdots,\alpha_r$ such that 
\begin{equation}\label{I21}
    \alpha f_i= \alpha_if_1.
\end{equation} 
If one  shows that $\alpha_i f_j = \alpha_j f_i$ for $i,j \geq 2$, then $(\alpha_1,\cdots,\alpha_r)$ satisfies (\ref{EI2}); so that $\alpha\in \Kitt(\mathfrak a, I)$. If we  suppose in addition  that $\alpha$  is $R$-regular. Then the desired equality is equivalent to 
\begin{equation}\label{I22}
    \alpha_i f_j\alpha = \alpha_j f_i \alpha,
\end{equation}  which holds  due to (\ref{I21}).

Thence any $R$-regular element in $J$ belongs to $\Kitt(\mathfrak a, I)$.
Now, let $\beta \in J$ be an arbitrary element. Since $J$ is a $1$-residual intersection,  $\Ht(J) \geq 1$. Since $R$ is Cohen-Macaulay, $\grade(J)\geq 1$. Therefore there exists $\alpha \in J$ which is  $R$-regular. Let $\alpha_i, 2\leq i\leq r$ satisfy (\ref{EI2}).  Let $\beta_2,\cdots,\beta_r$ be such that 
\begin{equation}\label{I23}
    \beta f_i= \beta_if_1
\end{equation}

The equations (\ref{I21}) and (\ref{I23}) implies that \begin{equation}\label{I24}
    (\alpha + \beta)f_i = f_1(\alpha_i+\beta_i).
\end{equation}
By applying (\ref{EI2}), we have   \begin{equation}\label{I25}
     f_i (\alpha_j + \beta_j) -  f_j(\alpha_i+\beta _i) = (f_i\alpha_j - f_j\alpha_i) + (f_i\beta_j-\beta_jf_i)=f_i\beta_j-\beta_jf_i.
\end{equation}

According to   (\ref{I24}),  
$(f_i (\alpha_j + \beta_j) -  f_j(\alpha_i+\beta _i))(\alpha+\beta)=0$. This in conjunction with (\ref{I25}) implies that 
\begin{equation}\label{I26}
     (f_i\beta_j-\beta_jf_i)(\alpha+\beta)= 0.
\end{equation}
Now, (\ref{I23}) implies that
\begin{equation}
    (f_i\beta_j-\beta_jf_i)(\beta)= 0.
\end{equation}
Subtracting the last two equations we get $$(f_i\beta_j-\beta_jf_i)(\alpha)= 0.$$
Which implies, by the regularity of $\alpha$, that 
\begin{equation}\label{I27}
    f_i\beta_j-\beta_jf_i = 0. 
\end{equation}
Therefore 
$$I_2\begin{bmatrix} \beta & \beta_2 & \cdots & \beta_r\\f_1 & f_2 &\cdots & f_r
\end{bmatrix} = 0$$
which implies that $\beta\in \Kitt(\fa,I)$.

\end{proof}
\begin{rmk}
One can see from the proof of Proposition \ref{Ps<g+1} that the Cohen-Macaulay assumption on $R$ is only needed to guarantee the existence of a regular sequence in $J$.  In other words, if in the  definition of residual intersection, Definition \ref{Dresidual}, one replaces $\Ht(J)\geq s$ with $\grade(J)\geq s$. The result of Proposition \ref{Ps<g+1} holds for any Noetherian ring.

Even the case where $s=g$, the above result for $J=\fa:I$ is more general than the known linkage theory, as here, $R$ is not Gorenstein and $I$ is not unmixed, necessarily. However,   Proposition \ref{Ps<g+1} determines the set of generators of $J=\fa:I$. 

Another importance of Conjecture \ref{CojK=J} or  Theorem \ref{Tmain} and Proposition \ref{Ps<g+1} is that: from the structure of colon ideal, $J=\fa:I$, it is not clear that $J$ can be specialized, particularly from a generic choice of $\fa$ to a general choice of $\fa$. However $\Kitt(\fa,I)=\langle\Gamma_{\bullet}\cdot Z_{\bullet}\rangle_r$ specializes naturally.  
\end{rmk}

One can also detect Cohen-Macaulay residual intersections under very slight conditions. 
\begin{prop}\label{Presolution}
Let $R$ be a Cohen-Macaulay ring, $I\subset R$ an ideal with $\Ht(I) = g$ and $J = \mathfrak a:I$  an $s$-residual intersection with $s\leq g+1$. Then $_0\mathcal{Z}^+_{\bullet}$ resolves $R/J$. 

More precisely, let $I=(f_1,\cdots,f_r)$,  $Z_{i}=Z_{i}(\ff,R)$  the Koszul cycles and $Z_{j}^+$ mean a direct sum of copies of Koszul cycles $Z_i$ for $i\geq j$, then 
\begin{enumerate}
\item  If $s=g$,  there exists an exact complex $0\to F_g\to\cdots\to F_2\to Z_{r-g}^+\to R\to R/J\to 0$ wherein $F_i$'s are free $R$-modules.
    \item If $s=g+1$,  there exists an exact complex $0\to F_{g+1}\to\cdots\to F_3\to Z_{r-g}^+\to Z_{r-g-1}^+\to R\to R/J\to 0$ wherein $F_i$'s are free $R$-modules.
    \end{enumerate}
    In particular,
    \begin{itemize}
        \item If $s=g$, $R/J$ is Cohen-Macaulay if and only if $\depth(Z_{r-g})\geq d-g+1$. In this case $\depth(Z_{r-g})= d-g+1$
    \item If $s=g+1$, $R/J$ is Cohen-Macaulay if  $\depth(Z_{r-g})\geq d-g+1$ and  $\depth(Z_{r-g-1})\geq d-g$.
\end{itemize}
\end{prop}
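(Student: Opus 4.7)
The plan is to combine Proposition~\ref{Ps<g+1} with a direct structural analysis of $_0\mathcal Z^+_\bullet$ and an application of the Peskine--Szpiro acyclicity lemma. Proposition~\ref{Ps<g+1} immediately gives $\Kitt(\mathfrak a,I)=J$ for $s\le g+1$, so by the very definition of the disguised residual intersection we have $H_0(_0\mathcal Z^+_\bullet)\simeq R/J$. It then remains to exhibit $_0\mathcal Z^+_\bullet$ in the asserted form and to verify that it is acyclic.

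I would next unwind $_0\mathcal Z^+_\bullet$ explicitly. With the standard grading of $S=R[T_1,\ldots,T_r]$, the degree-$(0)$ strand $(\mathcal D_i)_{(0)}$ vanishes for $0<i<r$, so the only right-hand term is $(\mathcal D_0)_{(0)}=R$ at position $0$, while positions $1,\ldots,s$ carry $H^r_\mathfrak t(\mathcal D_{r+i-1})_{(0)}$. Using that $H^r_\mathfrak t(S)_{-m}$ is free of rank $\binom{m-1}{r-1}$ for $m\ge r$, together with the vanishing $Z_r=(0:_R I)=0$ (which holds since $g\ge 1$), one obtains
\[
H^r_\mathfrak t(\mathcal D_{r+i-1})_{(0)} \;\simeq\; \bigoplus_{k=r+i-1-s}^{r-1} Z_k(\ff;R)\otimes_R R^{N_{i,k}}
\]
for explicit ranks $N_{i,k}$. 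For $s=g$ the position-$1$ term is exactly $Z_{r-g}^+$, and the cycles that appear at positions $i\ge 2$ have indices $k\ge r-g+1$; the connecting maps then arrange these summands into free modules $F_i$. The $s=g+1$ case is analogous, with a second Koszul-cycle position at $i=2$ producing $Z_{r-g-1}^+$.

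Acyclicity would then follow from the Peskine--Szpiro acyclicity lemma. Each $F_i$ is free over the Cohen--Macaulay ring $R$ of dimension $d$, so $\depth F_i=d\ge i$ for $i\le g+1$. The position-$1$ term $Z_{r-g}^+$ is a submodule of the free Koszul module $K_\bullet(\ff;R)$, and hence has positive depth; for $s=g+1$ the position-$2$ term $Z_{r-g-1}^+$ has depth $\ge 2$ by depth chasing through the short exact sequences $0\to Z_k\to K_k\to B_{k-1}\to 0$ and $0\to B_k\to Z_k\to H_k\to 0$. The acyclicity lemma then forces $H_i(_0\mathcal Z^+_\bullet)=0$ for all $i\ge 1$, so the displayed complexes resolve $R/J$.

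Finally, the Cohen--Macaulay characterizations follow from the depth lemma applied to the exact complex. For $s=g$,
\[
\depth(R/J)\;\ge\;\min\{\,d,\ \depth Z_{r-g}^+-1,\ d-2,\ \ldots,\ d-g\,\},
\]
and since $\dim(R/J)=d-g$, the ring $R/J$ is Cohen--Macaulay iff $\depth Z_{r-g}^+\ge d-g+1$; a reverse depth chase along the same complex pins this inequality to equality. The $s=g+1$ case is analogous. The main obstacle is the structural identification: verifying that the modules at positions $i\ge 2$ (resp.\ $i\ge 3$) collapse into free $R$-modules. This requires careful accounting of the Koszul-cycle summands at indices $k\ge r-g+1$ together with the \v{C}ech-induced connecting maps, exploiting the vanishing of $Z_r$ and the small-rank structure of the highest-index cycles sitting inside free Koszul modules.
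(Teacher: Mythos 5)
Your skeleton is reasonable (identify $H_0(_0\mathcal Z^+_\bullet)$ with $R/J$ via Proposition~\ref{Ps<g+1}, unwind the degree-zero strand, prove acyclicity, then depth-chase), and the final depth-chase for the Cohen--Macaulay statements is essentially the paper's. But the two steps that carry all the weight are gaps. The first is the structural one you yourself flag as ``the main obstacle'': the terms of $_0\mathcal Z^+_\bullet$ in homological degree $i\geq 2$ (for $s=g$; degree $i\geq 3$ for $s=g+1$) are direct sums of copies of $Z_k(\ff;R)$ with $k\geq r-g+1$, and these are \emph{not} free in general. Indeed, since $H_k(\ff;R)=0$ for $k>r-g$ and $Z_r=0$, each such $Z_k$ has the exact Koszul tail $0\to K_r\to\cdots\to K_{k+1}\to Z_k\to 0$ as a finite free resolution of length $r-k-1$; for instance $Z_{r-2}\simeq\coker(K_r\to K_{r-1})$ when $g\geq 3$, a non-free syzygy module of $\ff$. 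No ``arranging by the connecting maps'' turns a direct sum of non-free modules into a free module. What is actually required --- and what the paper does by invoking the construction of $\mathcal Z'_\bullet$ in \cite[Page 6375]{Ha} --- is to replace $_0\mathcal Z^+_\bullet$ by a quasi-isomorphic complex in which every $Z_k$ with $k>r-g$ is resolved by that exact Koszul tail; the free modules $F_i$ of the statement live on this modified complex, not on $_0\mathcal Z^+_\bullet$ itself. Your proposal asserts the conclusion of this step without proving it.

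The second gap is in the acyclicity. The Peskine--Szpiro lemma does not ``force $H_i=0$'' from depth bounds on the terms alone: one also needs, for each $i\geq 1$, that $H_i$ is zero or has depth zero, which in this setting is obtained by induction on dimension after showing that the higher homology of the complex is supported on $V(J)=V(\Kitt(\fa,I))$, a closed set of codimension at least $s$. That support statement is itself a nontrivial consequence of the Koszul--\v{C}ech spectral sequence comparison and is precisely what the paper imports from \cite[Corollary 2.9(c)]{Ha}; you never establish it, so the lemma cannot be applied as written. Two smaller slips: in the $s=g+1$ case the complex has $Z_{r-g-1}^+$ in position $1$ and $Z_{r-g}^+$ in position $2$ (your text swaps them, which also garbles the depth hypotheses you need at each spot), and $(\mathcal D_i)_{(0)}=0$ for all $i>0$, not merely for $0<i<r$.
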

\begin{proof}
Parts (1) and (2) follows from the construction of $\mathcal{Z}^{'}_{\bullet}$ complex in \cite[Page 6375]{Ha} and \cite[Corollary 2.9(c)]{Ha}. The statements about Cohen-Macaulay property follow from a usual diagram chasing (or spectral sequence) applying  to the complexes in the first part. We notice that one cannot deduce these Cohen-Macaulay properties appealing the standard sequence $0\to B_{r-g}\to Z_{r-g}\to H_{r-g}\to 0.$ 
\end{proof}
B. Ulrich \cite{U} defines the Artin-Nagata property, AN$_s$, for the  ideal $I$  in a Cohen-Macaulay ring $R$, if every $i$-residual intersection of $I$ is Cohen-Macaulay for any $i\leq s$. As a consequence of Conjecture \ref{conj},  \cite[Theorem 2.11]{Ha} implies that AN$_s$ is equivalent to SDC$_1$ condition at level $\min\{s-g,r-g\}$, Definition \ref{dsd}.  Nevertheless, we have the following corollary
\begin{cor}
        Let $R$ be a Cohen-Macaulay local ring of dimension $d$, $I\subset R$ an ideal  generated by $r$ elements with $\Ht(I) = g$ and $g\leq s\leq g+1$. Then the following are equivalent
        \begin{itemize}
            \item[(i)] $I$ satisfies AN$_s$.
            \item[(ii)] For any $i\leq s$ there exists an  $i$-residual intersection of $I$ which is Cohen-Macaulay. 
            \item[(iii)] $I$ satisfies SDC$_1$ condition at level $\min\{s-g,r-g\}$.
        \end{itemize}
         
        \end{cor}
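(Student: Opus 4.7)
The plan is to establish the cycle $(iii)\Rightarrow(i)\Rightarrow(ii)\Rightarrow(iii)$, the last implication being where the main work lies.

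$(i)\Rightarrow(ii)$ is essentially formal: AN$_s$ guarantees Cohen-Macaulayness of every algebraic $i$-residual intersection with $i\leq s$, and the existence of at least one such intersection for each $g\leq i\leq s$ follows from a standard generic-choice argument, after, if necessary, passing to a faithfully flat extension with infinite residue field, which preserves heights, depths, and the $\SDC_1$ hypothesis.

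For $(iii)\Rightarrow(i)$ I would combine Proposition~\ref{Ps<g+1} with the depth lemma applied to the resolving complex of Proposition~\ref{Presolution}. Given any algebraic $i$-residual intersection $J=\fa:I$ with $g\leq i\leq s\leq g+1$, Proposition~\ref{Ps<g+1} yields $\Kitt(\fa,I)=J$ without any depth hypothesis, and by Proposition~\ref{Presolution} the complex $_0\mathcal{Z}^+_\bullet$ resolves $R/J$. The $\SDC_1$ hypothesis at level $\min\{i-g,r-g\}\leq\min\{s-g,r-g\}$ provides exactly the depth bounds needed to run the depth lemma along this resolution and conclude that $R/J$ is Cohen-Macaulay of codimension $i$, which is AN$_s$.

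The core step is $(ii)\Rightarrow(iii)$. When $s=g$, Proposition~\ref{Presolution}(1) is an equivalence, so a Cohen-Macaulay $g$-residual intersection supplied by (ii) immediately forces $\depth Z_{r-g}(\ff;R)=d-g+1$, i.e.\ the $\SDC_1$ condition at level $0=\min\{s-g,r-g\}$. When $s=g+1$, I would first apply the $s=g$ case to a Cohen-Macaulay $g$-residual intersection granted by (ii) to obtain $\depth Z_{r-g}\geq d-g+1$. Then, given a Cohen-Macaulay $(g+1)$-residual intersection $J$ from (ii), I would reverse Proposition~\ref{Presolution}(2) by a depth chase along the explicit resolution
\[
0\to F_{g+1}\to\cdots\to F_3\to Z_{r-g}^+\to Z_{r-g-1}^+\to R\to R/J\to 0.
\]
Cohen-Macaulayness of $R/J$ of codimension $g+1$ gives $\depth J=d-g$, and the free tail of length $g-2$ forces the image of $F_3\to Z_{r-g}^+$ to have depth at least $d-g+2$; combining these through the two short exact sequences obtained at $Z_{r-g}^+$ and at $Z_{r-g-1}^+$ then propagates to $\depth Z_{r-g-1}\geq d-g$, which is the remaining $\SDC_1$ bound at level $1$.

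The main obstacle lies precisely in this last depth chase, since Proposition~\ref{Presolution}(2) is stated only as a one-way implication, so the converse must be extracted by hand. The most delicate technical point is the handling of $Z_{r-g}^+$: a priori it is a direct sum of Koszul cycles $Z_j$ of several degrees $j\geq r-g$, while $\SDC_1$ only controls $Z_{r-g}$ and $Z_{r-g-1}$. One must therefore verify that the contributions of higher-degree cycles $Z_j$ with $j\geq r-g+1$ do not weaken the depth bound on $Z_{r-g-1}$, a fact that should follow from the observation that these cycles sit near the top of a Koszul complex of length $r$ over the Cohen-Macaulay ring $R$ and hence have depth automatically large enough that the minimum defining $\depth Z_{r-g-1}^+$ is achieved at the index $j=r-g-1$.
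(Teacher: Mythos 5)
Your proof is correct and follows essentially the same route as the paper: $(iii)\Rightarrow(i)$ via Proposition \ref{Ps<g+1} combined with the depth bounds of Proposition \ref{Presolution}, and $(ii)\Rightarrow(iii)$ by reversing the depth chase along the resolutions of Proposition \ref{Presolution}, first at level $s=g$ and then at $s=g+1$. Your extra care about the summands $Z_j$ with $j>r-g$ inside $Z_{r-g}^+$ (which indeed have free resolutions of length $r-j-1$ coming from the top of the Koszul complex, hence depth at least $d-r+j+1\geq d-g+2$) addresses a detail the paper glosses over, but it does not change the argument.
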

        \begin{proof}
        $(i)\Rightarrow (ii)$  trivially.  $(iii)\Rightarrow (ii)$, according to  Proposition \ref{Ps<g+1} and \cite[Theorem 2.11]{Ha}.
        For $(ii)\Rightarrow (iii)$, we use the resolutions in Proposition \ref{Presolution}. Since $g$-residual intersections are CM, Proposition \ref{Presolution}(1) implies that $\depth(Z_{r-g}\geq d-g+1$. Now having  $\depth(Z_{r-g})\geq d-g+1$ and $\depth(R/J)\geq d-g-1$, Proposition \ref{Presolution}(2) implies that  $\depth(Z_{r-g-1})\geq d-g$.
        \end{proof}
%We finish the paper with an easy corollary of Proposition \ref{Ps<g+1} and Theorem \ref{TkittH} which is comparable with \cite[Theorem 8.1]{KMU}
%\begin{cor}
%Let  $R$ be a Gorenstein ring, $I\subset R$ be an   almost complete intersection of height $g$. Then any $(g+1)$-residual intersection of  $I$, is  generated by at most $1+(g+1)(\mu(\omega_{R/I})+1))$ elements.

%Notice that, in this case, $\mu(\omega_{R/I})\leq \beta_1(I)$ and,  by a result of  E.Kunz, $\mu(\omega_{R/I})\geq 2$.
%\end{cor}
%=============================================================
\section{Acknowledgments}
The first author was supported by a Ph.D. scholarship from CAPES-Brazil and of FAPERJ-Brazil. The second author was supported by   the grant ``bolsa de produtividade 301407/2016-9" from  CNPq-Brazil. The authors  would like to thank CAPES and CNPq for their support. 
They also thank Marc Chardin and Jose Naeliton for useful discussions.

%=================================================================================================================


\begin{thebibliography}{A}
\bibitem[AN]{AN} M. Artin and M. Nagata, \textit{Residual intersection in Cohen-Macaulay rings},
J.Math. Kyoto Univ. (1972), 307--323.

\bibitem[AH]{AH}L. Avramov and J. Herzog, \textit{The Koszul algebra of a codimension 2 embedding}. Math. Z. 175 (1980) 249--260.

\bibitem[B]{B}V. Bou\c{c}a, \textit{Generators of Residual Intersections}. Ph.D. Thesis, Institute of Mathematics, Federal University of Rio de Janeiro, 2019.

\bibitem[BH]{BH} W. Bruns,  J. Herzog, \textit{Cohen--Macaulay Rings}, revised version, Cambridge University
Press, Cambridge, 1998.

\bibitem[BKM]{BKM} W. Bruns, A. Kustin and M. Miller, \textit{ The Resolution of the Generic Residual Intersection of a Complete Intersection},  J. Algebra 128, (1990) 214--239. 

\bibitem[Ch]{Ch} M. Chasles, \textit{Construction des coniques qui satisfont a cinque conditions}, C. R. Acad. Sci. Paris 58 (1864) 297--308.

\bibitem[CEU]{CEU} M. Chardin, D.Eisenbud and B. Ulrich, \textit{Hilbert Functions, Residual Intersections,
and Residually $S_2$ Ideals}, Composito Mathematica \textbf{125}
(2001), 193--219.

\bibitem[CEU1]{CEU1} M. Chardin, D.Eisenbud and B. Ulrich, \textit{Hilbert Series of Residual Intersections}, Compositio Mathematica,  \textbf{151} (2015), no. 9, 1663 --1687

\bibitem[CNT]{CNT} M. Chardin, Jose Naeliton and  Quang Hoa Tran, \textit{ Cohen-Macaulayness and canonical module of residual intersections},  Trans. Amer. Math. Soc, to appear.

\bibitem[CU]{CU} M. Chardin and B. Ulrich, \textit{Liaison and Castelnuovo-Mumford regularity}, American
Journal of Mathematics \textbf{124} (2002), 1103Ã¢ÂÂ-1124.

\bibitem[CHKV]{CHKV} A.Corso, C.Huneke, D. Katz, W. Vasconcelos,\textit{ Integral closure of ideals and annihilators of homology},  Commutative algebra, 33--48,
Lect. Notes Pure Appl. Math., 244, Chapman and Hall-CRC, Boca Raton, FL, 2006.

\bibitem[DS]{DS}C. Deconcini and E. Strickland, \textit{ On the variety of complexes}, Adv. in Math. 41 (1981). 57--77.

\bibitem[Eis]{Eis} D. Eisenbud, \textit{Commutative Algebra with a view toward algebraic geometry},
Graduate Texts in Math. \textbf{150}, Springer, New York, 1995.

\bibitem[Eis2]{EisT}D. Eisenbud,\textit{An unexpected property of some residual intersections}, Banff international research center, New trends in syzygies,    \url{http://www.birs.ca/events/2018/5-day-workshops/18w5133/videos/watch/201806270902-Eisenbud.html},  June 2018.

\bibitem[EHU]{EHU} D. Eisenbud, C. Huneke, B. Ulrich,  \textit{order ideals and a generalized height theorem},
 Math. Annalen \textbf{330} (2004), 417--439.

\bibitem[EU]{EU} D.Eisenbud and B. Ulrich, \textit{Residual intersections and duality}, J. Reine Angew. Math, to appear.


\bibitem[Fu]{Fu} W. Fulton,  \textit{Intersection Theory}, second edition, Springer, 1998.

\bibitem[HVV]{HVV}J. Herzog, W.V. Vasconcelos,and R. Villarreal \textit{Ideals With Sliding Depth}, Nagoya Math. J. \textbf{99}(1985),  159--172.


\bibitem[HHU]{HHU} R. Hartshorne, C. Huneke,B. Ulrich, \textit{Residual intersections of licci ideals are glicci}, Michigan Math. J. \textbf{61} (2012), no. 4, 675--701.

%\bibitem[H]{H}R. Hartshorne, \textit{Algebraic Geometry}, Springer, 1977.
 
\bibitem[Ha]{Ha}S. H. Hassanzadeh, \textit{Cohen-Macaulay residual intersections and their Castelnuovo-Mumford regularity},
Trans. Amer. Math. Soc. \textbf{364}(2012), 6371--6394.

\bibitem[HN]{HN}S. H. Hassanzadeh, J. Naeliton, \textit{Residual Intersections and the annihilator of Koszul homologies},
 Algebra Number Theory  \textbf{10} (2016), no. 4, 737--770. 

\bibitem[HSV]{HSV}J. Herzog, A. Simis, and W. Vasconcelos, \textit{Koszul homology and blowing-up rings}, Commutative
Algebra (Trento, 1981), Lecture Notes in Pure and Appl. Math.,
vol. 84, Marcel Dekker, NY,(1983), pp. 79--169.



\bibitem[Hu]{Hu}C. Huneke, \textit{Strongly Cohen-Macaulay schemes and residual intersections}, Trans. Amer. Math. Soc.
\textbf{277}(1983), 739--763.

\bibitem[Hu1]{Hu1}C. Huneke, \textit{ The Koszul Homology
of an Ideal}, Advances in Math.
\textbf{56}(1985), 295--318.

\bibitem[HU]{HU}C. Huneke, B. Ulrich, \textit{Residual Intersection}, J. reine angew. Math.\textbf{390}(1988), 1--20.

\bibitem[K]{K} Kleiman, S. \textit{ Chasles's enumerative theory of conics: a historical introduction} ,  Studies in Algebraic Geometry, MAA Stud. Math. 20, Math. Assoc. America, 1980, pp. 117--138.

\bibitem[KMU]{KMU} A. Kustin,M. Miller,B. Ulrich, \textit{ Generating a residual intersection}, J. Algebra 146 (1992), no. 2, 335--384.

\bibitem[KU]{KU} A. Kustin and B. Ulrich, \textit{A family of complexes associated to an almost alternating map, with applications to residual intersections}, Mem. Amer. Math. Soc. \textbf{461}(1992).

% \bibitem[LSW]{LSW}  G.Lyubeznik, A. Singh, U. Walther \textit{Local cohomology modules supported at determinantal ideals}, Arxiv  arXiv:1308.4182.


%\bibitem[M]{M} H. Matsumura, \textit{Commutative ring theory}, Cambridge University
%Press, Cambridge, 1986.

\bibitem[PS]{PS}C. Peskine and L. Szpiro, \textit{Liaison des variete algebriques}, Invent. Math. (1974) 271--302.


%\bibitem[S]{S}R. Stanley, \textit{Weyl groups, the Hard Lefschetz theorem and the Sperner property}, SIAMJ.
%Algebra Discrete Math. \textbf{1} (1980), 168--184.

\bibitem[U]{U} B. Ulrich, \textit{Artin-Nagata properties and reduction of ideals}, Contemp. Math.
\textbf{159}(1994), 373--400.

\bibitem[U1]{U1} B. Ulrich, \textit{ Remarks on Residual Intersections}, Free resolutions in Commutative Algebra and Algebraic Geometry, Eds. D. Eisenbud and C. Huneke, Research Notes in Mathematics \textbf{2} (1992), 133--138.


\bibitem[V]{V} W. V. Vasconcelos, \textit{Arithmetic of Blowup Algebras},
\textbf{195}, Cambridge University Press, 1994.

%\bibitem[V2]{V2} W. V. Vasconcelos, \textit{Integral ClosureRees Algebras, Multiplicities, Algorithms}, Springer, 2005.

%\bibitem[W]{W} C. A. Weibel, \textit{An introduction to homological algebra},
%\textbf{38}, Cambridge University Press, 1994.

\bibitem[Wu]{Wu} X. Wu, \textit{Residual Intersections and some applications}, Duke. Math. Journal, 
\textbf{75} N. 3(1994), 733--758.
\end{thebibliography}
\end{document}